\newtheorem{Lemma}{Lemma}
\newtheorem{Theorem}{Theorem}
\newtheorem{Proposition}{Proposition}
\newtheorem{Corollary}{Corollary}
\newtheorem{Remark}{Remark}
\newtheorem{Definition}{Definition}
\def\vph{\varphi}
\def\rme{\mathrm{e}}
\def\rmi{\mathrm{i}}
\def\R{\mathbb{R}}
\def\C{\mathbb{C}}
\def\Z{\mathbb{Z}}
\def\calO{\mathcal{O}}
\def\m{\boldsymbol{m}}
\def\M{\boldsymbol{M}}
\def\1{\boldsymbol{e}_1}
\def\3{\boldsymbol{e}_3}
\def\p{\boldsymbol{e}_p}
\def\h{\boldsymbol{h}}
\def\H{\boldsymbol{H}}
\def\cc{c_{\textnormal{cp}}}
\def\smu{\sqrt{-\mu}}
\def\mdrei{\textnormal{m}_3}
\def\b{\boldsymbol{\beta}}
\def\tH{\widetilde{H}}
\def\Im{\mathrm{Im}}
\def\Re{\mathrm{Re}}
\newcommand*{\tran}{^{\mkern-1.2mu\mathsf{T}}}
\title{Inhomogeneous domain walls in spintronic nanowires}
\author{L.\ Siemer\footnote{Universität Bremen, \texttt{lars.siemer@uni-bremen.de}; Corresponding author} \and I.\ Ovsyannikov\footnote{Universität Hamburg, Lobachevsky State University of Nizhny Novgorod} \and J.D.M.\ Rademacher\footnote{Universität Bremen}}
\date{\today}
\begin{document}
\maketitle
\begin{abstract}\label{sec:abstract}
\noindent In case of a spin-polarized current, the magnetization dynamics in nanowires are governed by the classical \emph{Landau-Lifschitz} equation with \emph{Gilbert} damping term, augmented by a typically non-variational \emph{Slonczewski} term. Taking axial symmetry into account, we study the existence of domain wall type coherent structure solutions, with focus on one space dimension and spin-polarization, but our results also apply to vanishing spin-torque term. Using methods from bifurcation theory for arbitrary constant applied fields, we prove the existence of domain walls with non-trivial azimuthal profile, referred to as \emph{inhomogeneous}. We present an apparently new type of domain wall, referred to as \emph{non-flat}, whose approach of the axial magnetization has a certain oscillatory character. Additionally, we present the leading order mechanism for the parameter selection of \emph{flat} and \emph{non-flat} inhomogeneous domain walls for an applied field below a threshold, which depends on anisotropy, damping, and spin-transfer. Moreover, numerical continuation results of all these domain wall solutions are presented.
\end{abstract}

\section{Introduction}\label{sec:introduction}
Magnetic domain walls (DWs) are of great interest both from a theoretical perspective and for applications, especially in the context of innovative magnetic storages~\cite{parkin2008magnetic}. Recent developments in controlled movement of DWs via spin-polarized current pulses in nanomagnetic structures, in particular in nanowires, are thought to lead to a new class of potential non-volatile storage memories, e.g.\ racetrack memory~\cite{parkin2008magnetic, reohr2002memories, lin200945nm, qureshi2009scalable}. These devices make use of the fact that spin-transfer driven effects can change the dynamics in sufficiently small ferromagnetic structures (e.g.\ nanowires), where regions of uniform magnetization, separated by DWs, can appear~\cite{mayergoyz2009nonlinear, hubert2008magnetic, bertotti2008spin}. This motivates further studies of the existence of magnetic domains and their interaction with spin-polarized currents as a building block for the theory in this context. In this paper we take a mathematical perspective and, in a model for nanomagnetic wires, rigorously study the existence of DWs. This led us to discover an apparently new kind of DWs with a certain inhomogeneous and oscillatory structure as explained in more detail below.

\medskip
The description of magnetization dynamics in nanomagnetic structures, governed by the \emph{Landau-Lifschitz-Gilbert} (LLG) equation, is based on works by Berger and Slonczewski assuming a spin-polarized current~\cite{berger1996emission, slonczewski1996current}. In the presence of a constant applied field and a spin-polarized current, the dynamics driven by the joint action of magnetic field and spin torque can be studied by adding a spin-transfer term in the direction of the current (current-perpendicular-to-plane (CPP) configuration). In case of a spatially uniform magnetization, the resulting \emph{Landau-Lifschitz-Gilbert-Slonczewski} (LLGS) equation for unit vector fields $(m_1,m_2,m_3)=\m=\m(x,t) \in \mathbb{S}^2$ (cf.\ Figure~\ref{fig:homdw}) reads
\begin{equation*}\label{LLGS}
\partial_t \m - \alpha \m \times \partial_t \m = -\m \times \h_{\textnormal{eff}} + \m \times \left(\m \times \boldsymbol{J}\right).\tag{LLGS}
\end{equation*}

with effective field  $\h_{\textnormal{eff}}$, Gilbert damping factor $\alpha>0$, and the last term is the so-called polarized spin transfer pseudotorque.

Note that the above equation reduces to the \ref{LLG} equation for $\boldsymbol{J}\equiv 0$, see \S\ref{sec:model} for more details.

\medskip
In this paper we consider the axially symmetric case and set
\begin{equation}\label{eq:effective field}
    \h_{\textnormal{eff}}\coloneqq\partial^2_x \m + \h- \mu \, \mdrei\, \3\,, \quad \boldsymbol{J} \coloneqq \frac{\beta}{1+\cc m_3}\3\,,
\end{equation}
where $\h=h\,\3$ with a uniform and time-independent field strength $h \in \R$, and $\mdrei = \langle\m, \boldsymbol{e}_3\rangle$, $\boldsymbol{e}_3 \in \mathbb{S}^2$. This effective field $\h_{\textnormal{eff}}$ also includes the diffusive exchange term $\partial_x^2\m$, the uniaxial anisotropy and demagnetization field. The specific here with parameter $\mu\in\R$ derives from a first order approximation in the thin film/wire limit for a uniformly magnetized body~\cite{hubert2008magnetic, osborn1945demagnetizing}. In the axially symmetric structure, $\beta\ge 0$ and $\cc\in(-1,1)$ describe the strength of the spin-transfer and the ratio of the polarization~\cite{bertotti2008spin, slonczewski2002currents}. The spin-transfer torque term may provide energy to the system under certain conditions and counterbalance dissipation associated to the Gilbert damping term, which gives rise to coherent non-variational dynamics, see e.g.~\cite{Melcher2017}. 

Notably, for $\beta=0$ one obtains the LLG-equation that does not account for spin transfer effects. Moreover, as shown in~\cite{Melcher2017}, this also holds up to parameter change in case $\cc=0$. Hence, solutions to the LLGS equation for $\beta=0$ or $\cc=0$ are also solutions to the LLG equation, so that all the analytical as well as numerical results for $\cc=0$ in this paper directly transfer to the LLG equation. 

\begin{figure}[h]
    \centering
    \begin{subfigure}[b]{0.25\textwidth}
    \includegraphics[width=\textwidth]{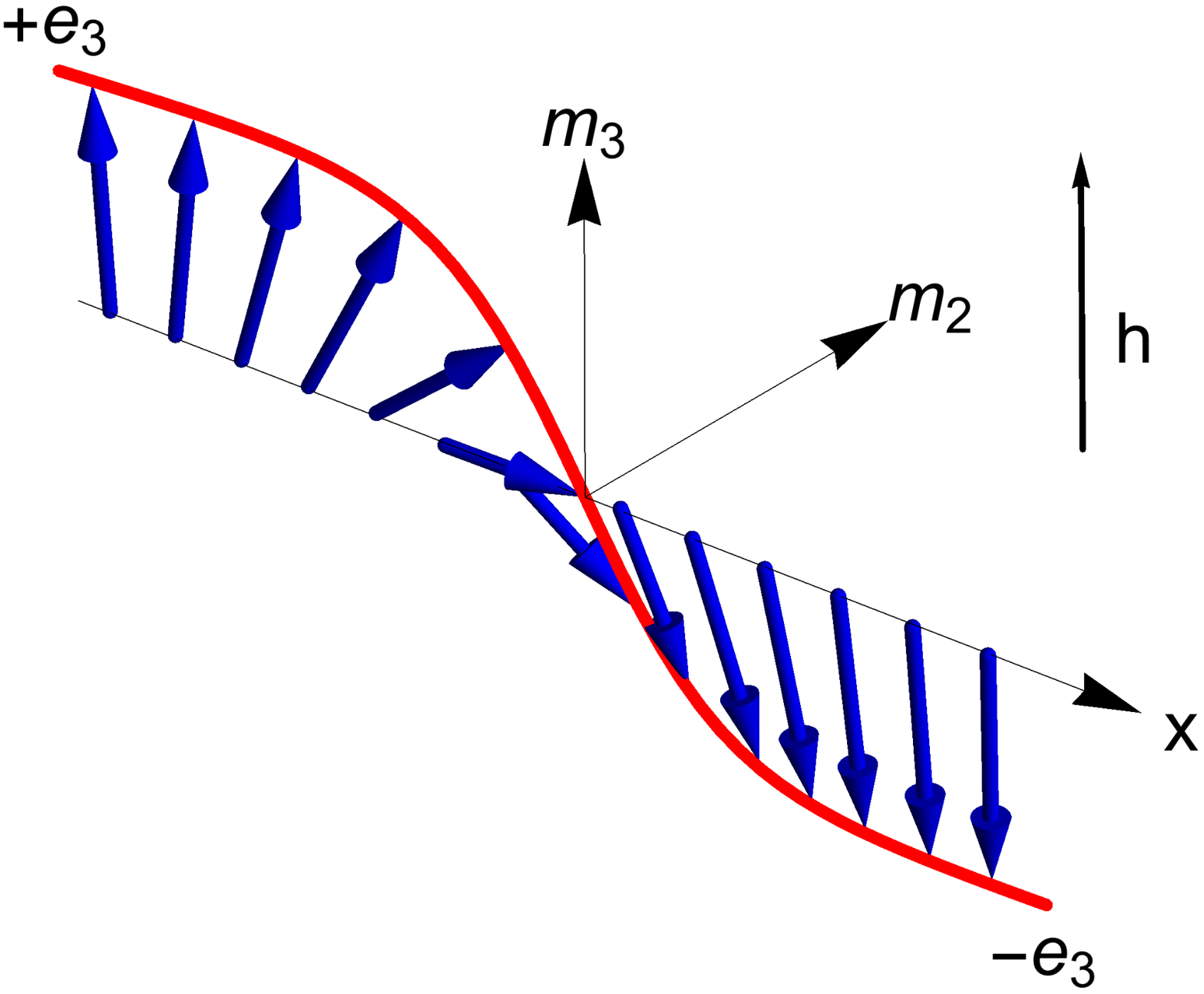}
    \caption{}
    \label{fig:domainwall}
    \end{subfigure}
    \hfill
    \begin{subfigure}[b]{0.25\textwidth}
    \includegraphics[width=\textwidth]{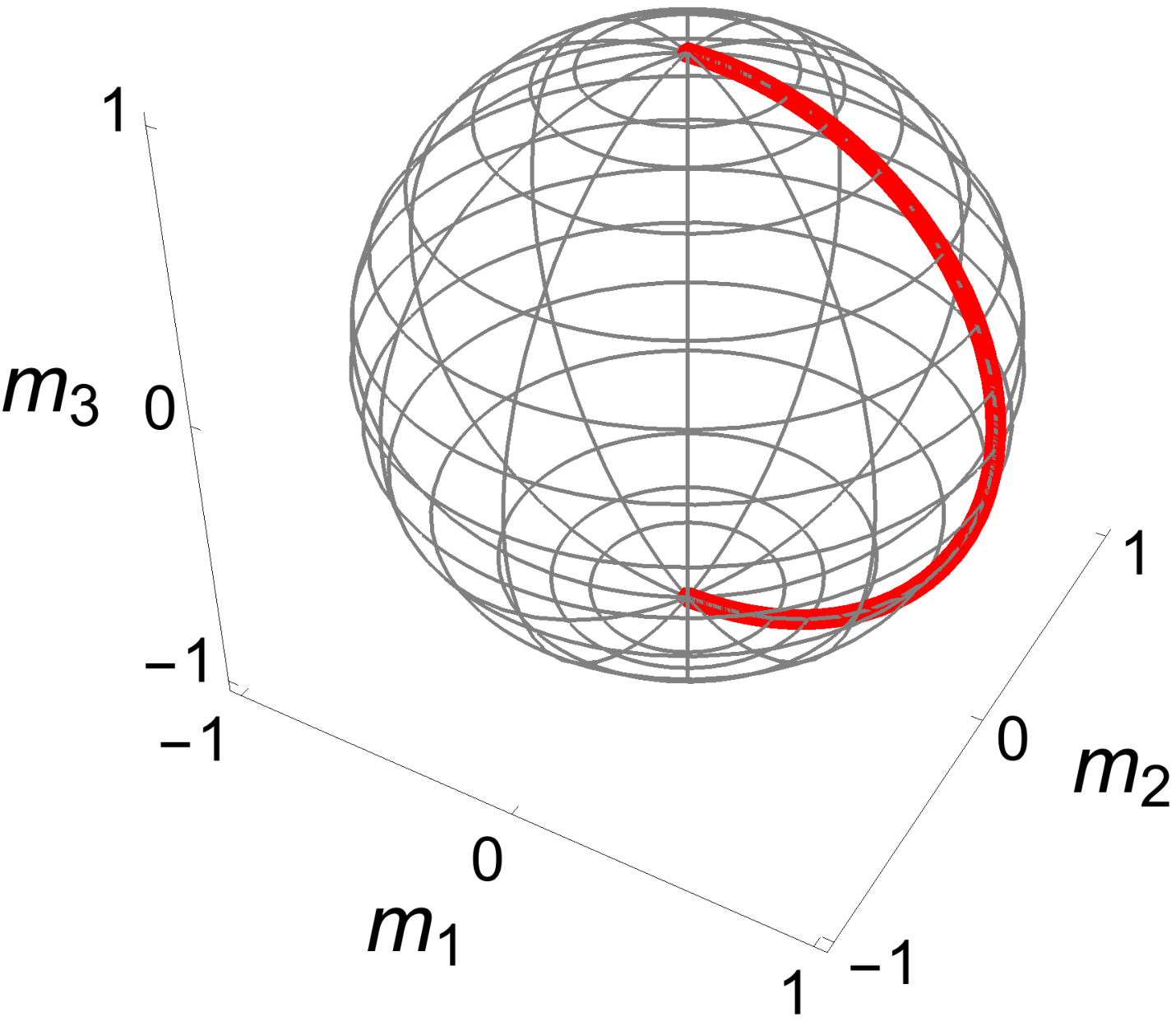}
    \caption{}
    \label{fig:initial arctan}
    \end{subfigure}
    \hfill
    \begin{subfigure}[b]{0.25\textwidth}
    \includegraphics[width=\textwidth]{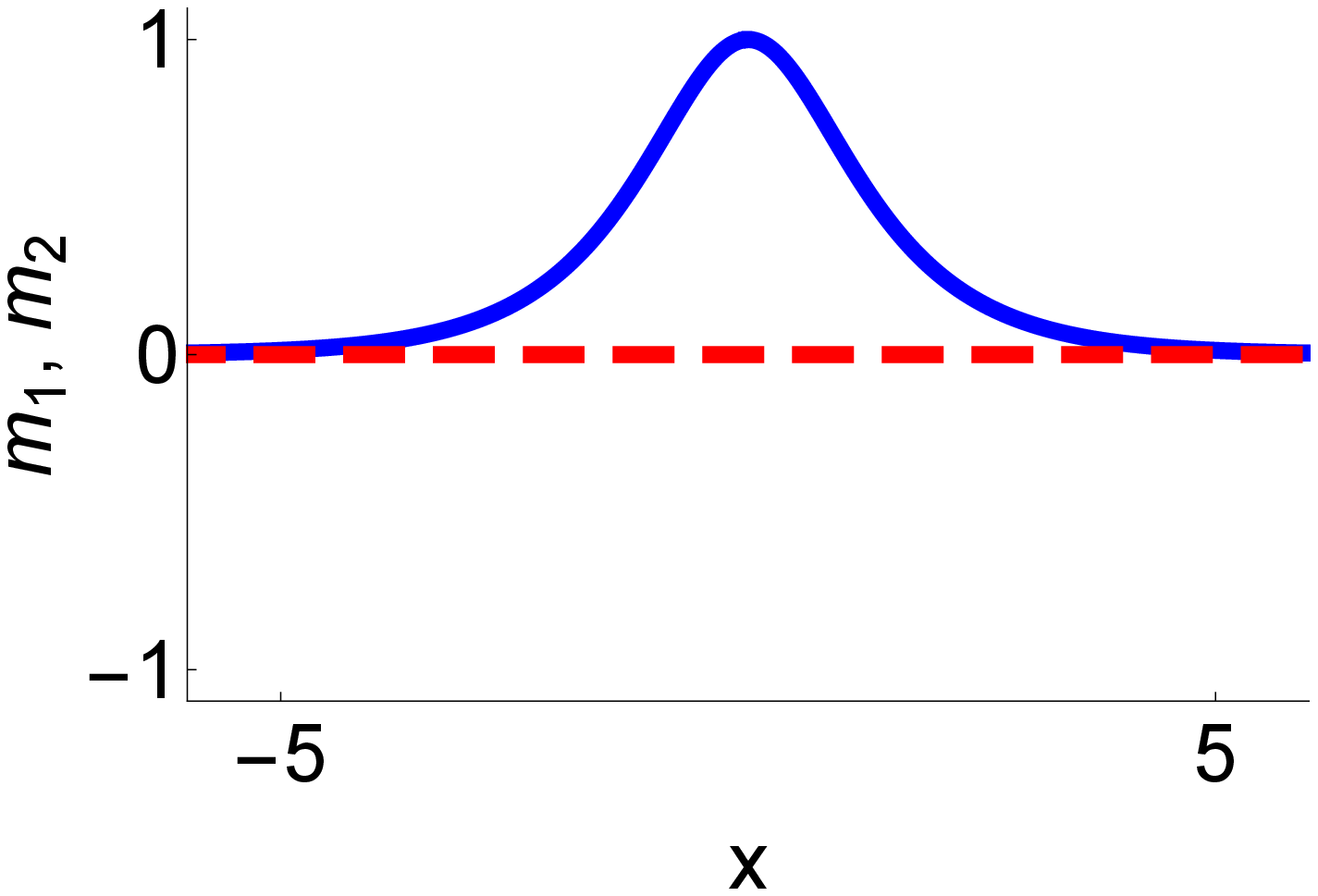}
    \caption{}
    \end{subfigure}
    \caption{\small Homogeneous DW profile ($q\equiv0$) with $\alpha=0.5, \beta=0.1, \mu=-1, h=50, \cc=0$. (a) $(m_2,m_3)$-profile. (b) Projection onto $\mathbb{S}^2$. (c) Zoom-in on $m_1$ (blue solid) and $m_2$ (red dashed).}
    \label{fig:homdw}
\end{figure}

\medskip
A key ingredient for the separation of uniformly magnetized states in space are interfaces between two magnetic domains. The most coherent form of such interfaces in the uniaxial setting are relative equilibria with respect to translation and rotation symmetry of the form $$\m(\xi,t)=\m_0(\xi)\rme^{\rmi \varphi (\xi,t)}, \quad\textnormal{where } \xi=x-st \textnormal{ and } \varphi(\xi,t)\coloneqq \phi(\xi)+\Omega t,$$ with \emph{speed} $s$ and \emph{frequency} $\Omega$. Here the complex exponential acts on $\m_0\in \mathbb{S}^2$ by rotation about the $\boldsymbol{e}_3$-axis, i.e., the azimuth, and in spherical coordinates we can choose $\m_0(\xi)=\left(\sin(\theta(\xi)),0,\cos(\theta(\xi))\right)$ with altitude angle $\theta$.

We refer to such solutions with $\m_0(\xi)\to \pm\3$ as $\xi\to \pm\infty$ or $\xi\to \mp\infty$ as \emph{domain walls}. A first classification of DWs is based on the \emph{local wavenumber} $q:=\phi'$, which determines $\phi$ uniquely due to the axial rotation symmetry and satisfies
\begin{equation}\label{eq:q}q(\xi)=\frac{\langle (m_1',m_2'),(-m_2,m_1)\rangle}{1-m_3^2}(\xi).
\end{equation}
\begin{Definition}\label{def:inhom}
We call a DW with constant $\phi$ \emph{homogeneous (hom)}, i.e., $q\equiv0$, and \emph{inhomogeneous} otherwise.
\end{Definition}
Inhomogeneous DWs have a spatially inhomogeneous varying azimuthal angle, compare Figures~\ref{fig:homdw} and \ref{fig:inhomogeneous DW sphere}.

In the case of uniaxial symmetry and the LLG case $\beta=0$, an explicit family of homogeneous DWs was discovered in~\cite{goussev2010domain} for applied fields with arbitrary strength and time dependence, cf.\ Figure~\ref{fig:homdw}. Furthermore, for constant applied fields and in case of $\cc\neq0$ it was shown in~\cite{Melcher2017} that DWs cannot be homogeneous, and the existence of inhomogeneous DWs was proven, whose spatial profile slowly converges to $\pm\3$ and where $|s|\gg 1$. This latter type of DWs is `weakly localized' and has large `width' in the sense that the inverse slope of $m_3$ at $m_3(0)=0$ tends to infinity as $|s|\to\infty$.

\begin{figure}[h]
    \centering
    \begin{subfigure}[b]{0.25\textwidth}
        \centering
        \caption*{\small small applied field}
        \includegraphics[width=\textwidth]{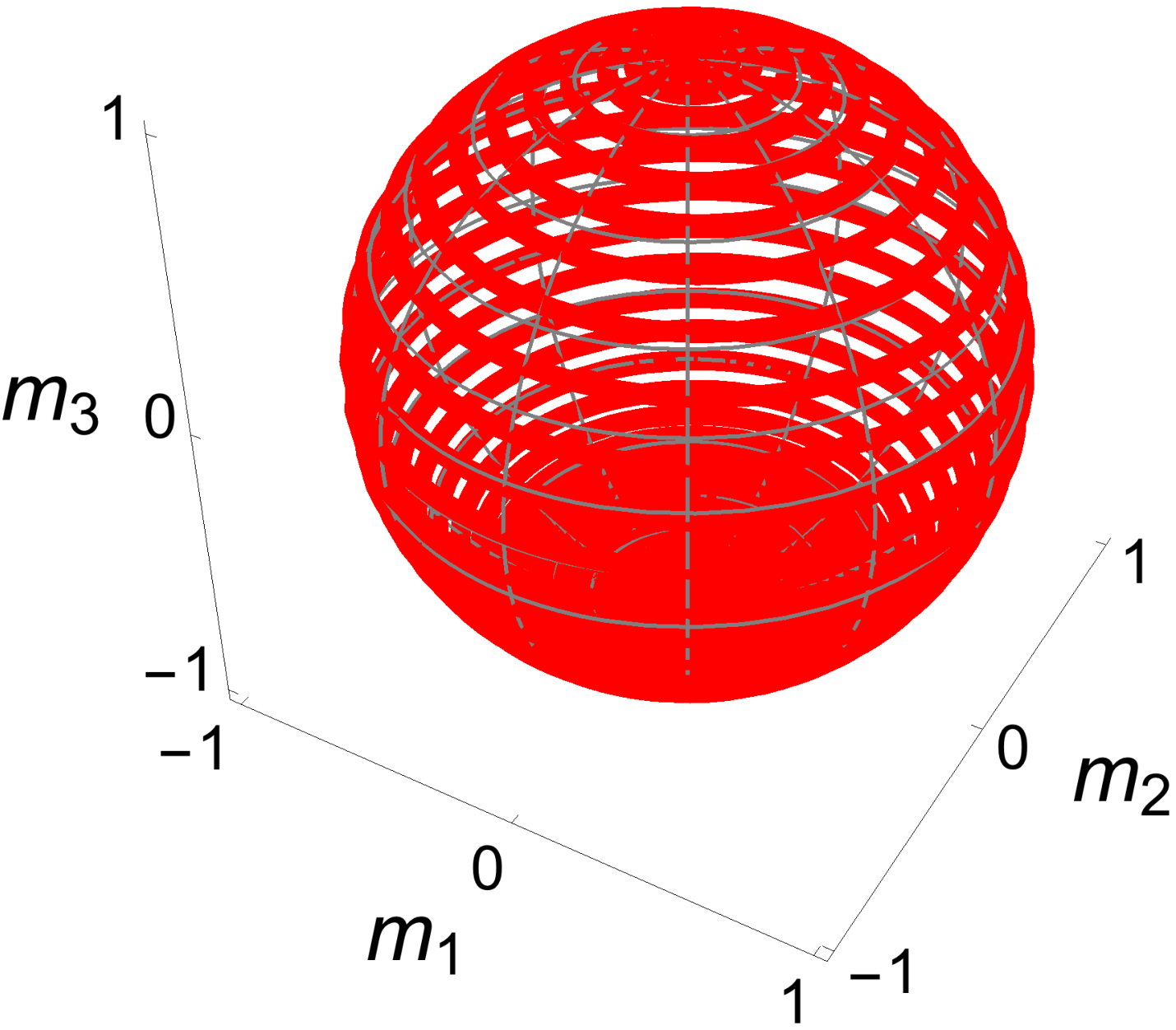}
        \caption{}
        \label{fig:continuation h=0.5 plus}
    \end{subfigure}
    \hspace{0.25\textwidth}
    \begin{subfigure}[b]{0.25\textwidth}  
        \centering
        \caption*{\small large applied field}
        \includegraphics[width=\textwidth]{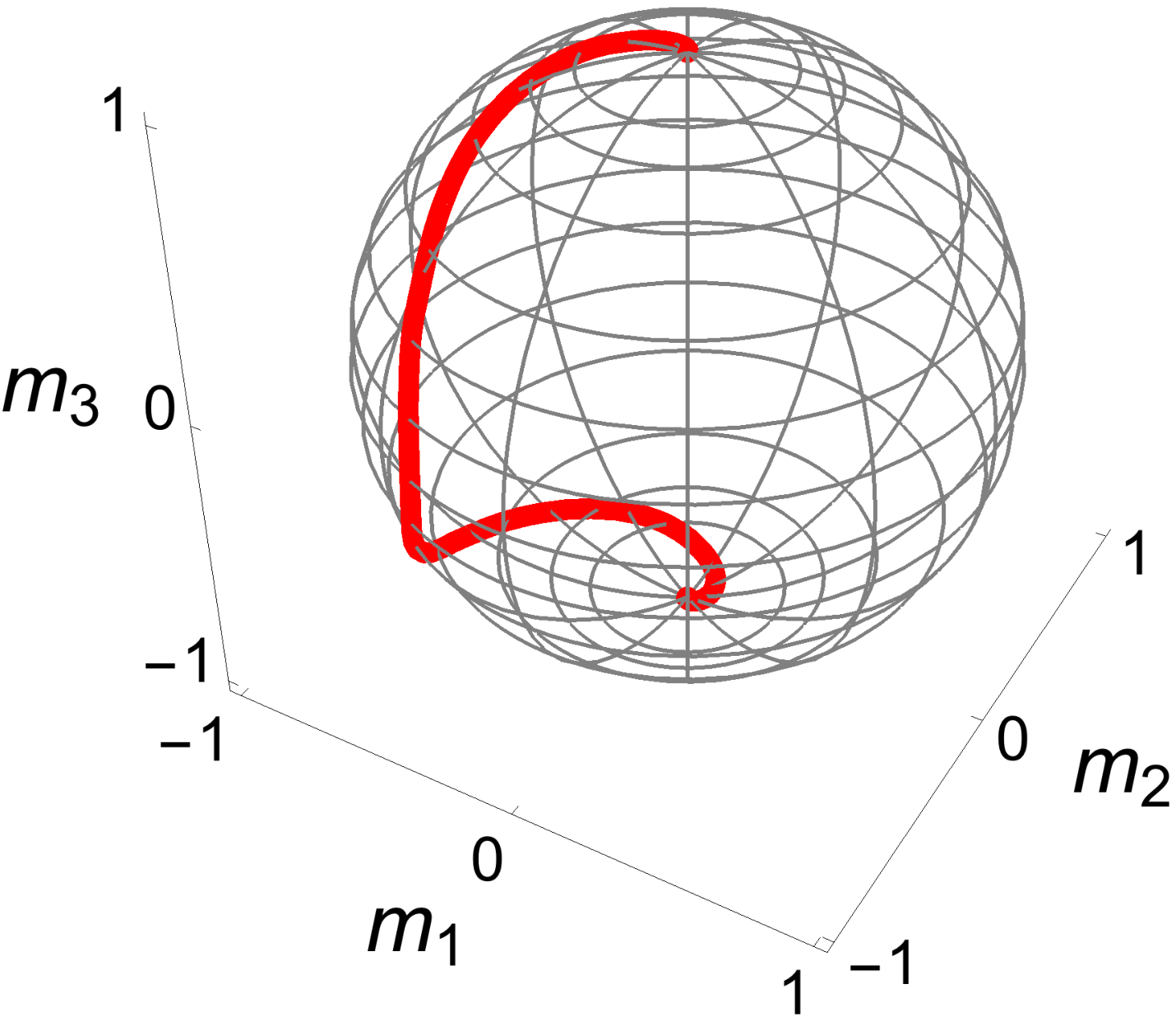}
        \caption{}
        \label{fig:continuation h=50 plus}
    \end{subfigure}
    
    \vspace*{12pt}
    \begin{subfigure}[b]{0.25\textwidth}  
        \centering 
        \includegraphics[width=\textwidth]{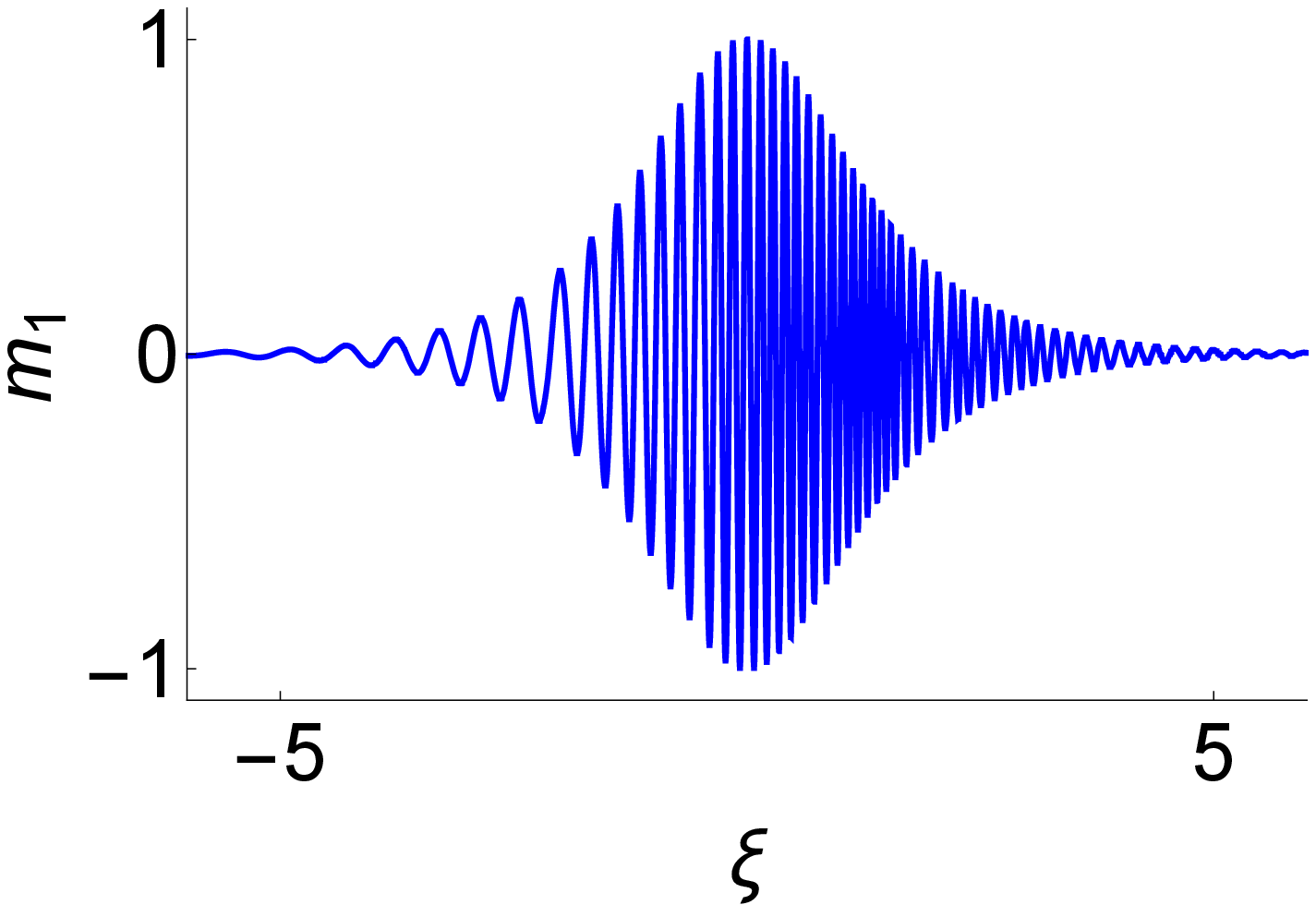}
        \caption{}
        \label{fig:continuation h=0.5 plus m1 component zoom-in}
    \end{subfigure}
    \hspace{0.25\textwidth}
    \begin{subfigure}[b]{0.25\textwidth}  
        \centering 
        \includegraphics[width=\textwidth]{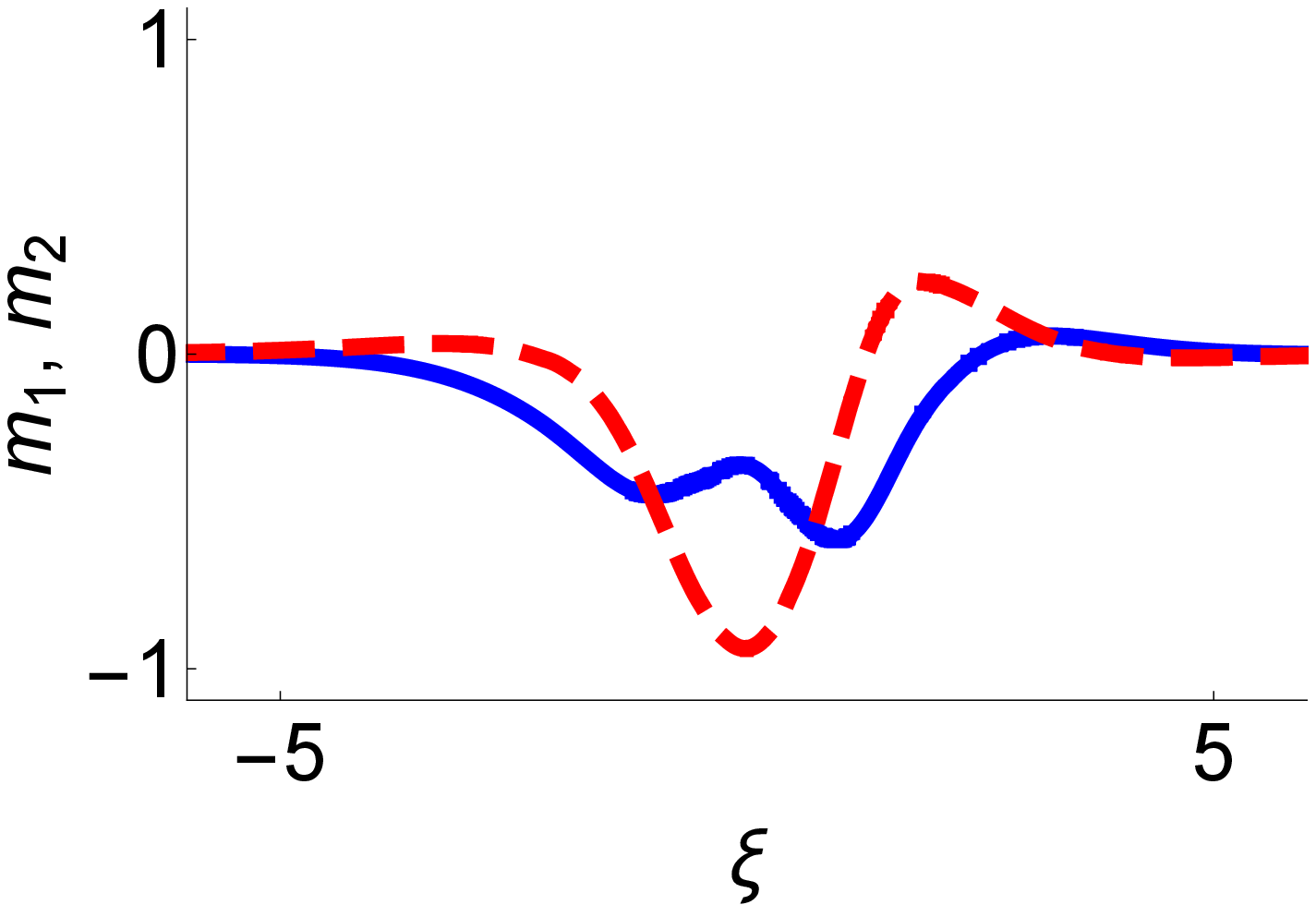}
        \caption{}
        \label{fig:continuation h=50 plus m1,m2 component}
    \end{subfigure}
    \caption{\small Shown are profiles of inhomogeneous DWs $\m(\xi)$ computed by parameter continuation, cf.\ \S\ref{sec:continuation}, in $\cc$ to $\cc=0.5$ with fixed $\alpha=0.5, \beta=0.1, \mu=-1$. (a,c) codim 2 case $h=0.5, s=0.112027, \Omega=0.447173$, (b,d) codim 0 case $h=50, s=19.92, \Omega=40.4$. (c) magnification of the $m_1$-component; note the change of frequency for small vs.\ large $\xi$. (d) Magnification of $m_1$ (blue solid) as well as $m_2$ (red dashed) component.} 
    \label{fig:inhomogeneous DW sphere}
\end{figure}

\medskip
An apparently thus far unrecognized distinction of DWs is based on the convergence behavior of $q$ as $\xi\to\pm\infty$. 

\begin{Definition}\label{def:flat}
We call a DW \emph{flat} if $|q(\xi)|$ has a limit on $\R\cup\lbrace\infty\rbrace$ as $|\xi|\to\infty$ and \emph{non-flat} otherwise.
\end{Definition}

Note that homogeneous DWs are flat ones by definition (recall $\phi'=q$). Moreover, for all DWs $m_0(\xi)$ converges to $\3$ or $-\3$ as $|\xi| \to\infty$.

\medskip
The main result of this paper is an essentially complete understanding of the existence and the type of DWs near the aforementioned explicit solution family for a nanowire geometry, i.e., $\mu<0$. This includes the LLG case $\beta\cdot\cc=0$, but our focus is on the spintronic case $\beta\cdot\cc \neq 0$ for which these results pertain $0<|\cc|\ll 1$ and any value of the (constant) applied field $h$.

\medskip
The different types of DWs occur in parameter regimes close to $\cc=0$ in the (spatial) ODE which results from the coherent structure ansatz. Since the parameters $\alpha$ and $\mu$ are material-dependent we take the applied field strength $h$ as the primary parameter.

\medskip
In brief, organized by stability properties of the steady states $\pm \3$ in the ODE, this leads to the following cases and existence results for localized DWs in nanowires ($\mu<0$):
\begin{itemize}
\item `codim-2' ($h_\ast<h< h^\ast$) : existence of flat inhomogeneous DWs with $s,\Omega$ selected by the other parameters,
\item `center' ($h=h_\ast$ or $h=h^\ast$) : existence of flat and non-flat inhomogeneous DWs, 
\item `codim-0' ($h<h_\ast$ or $h>h^\ast$) : existence of flat inhomogeneous DWs,
\end{itemize}
where $h_\ast\coloneqq \beta/\alpha +\frac{2 \mu}{\alpha^2} (1 + \alpha^2)$ as well as $h^\ast\coloneqq \beta / \alpha - \frac{2 \mu}{\alpha^2} (1 + \alpha^2)$. Note that $h_\ast<h^\ast$ always (recall $\alpha>0$ and $\mu<0$). Due to symmetry reasons, we mainly discuss the existence of right-moving DWs close to the explicit solution family and thus focus on an applied field $\beta/\alpha\le h$ (cf.\ \S3). The main results can be directly transferred to the case of left-moving DWs ($h\le\beta/\alpha$). Notably, the codim-0 case occurs for `large' magnetic field $h$ above a material dependent threshold. In the center and codim-2 cases there is a selection of $s$ and $\Omega$ by the existence problem.

\medskip The basic relation between the PDE and the ODE stability properties w.r.t.\ $h$ and $\cc$ are illustrated in Figure~\ref{fig:stability diagram} for $\alpha=1, \beta=0.5, \mu=-1$ fixed. Due to the fact that $s$ and $\Omega$ are ODE parameters only, the diagram illustrates a slice in the four dimensional parameter space with axes $h, \cc, s$, and $\Omega$. Note that homogeneous DWs (hom) can occur only on the line $\cc\equiv 0$ (see~\cite[Theorem 5]{Melcher2017} for details). The stability regions are defined as follows. \emph{monostable$^-$} (blue): $+\3$ unstable and $-\3$ stable, \emph{bistable} (shaded blue): both $+\3$ and $-\3$ stable, \emph{monostable$^+$} (red): $+\3$ stable and $-\3$ unstable, \emph{unstable} (shaded red): both $+\3$ and $-\3$ unstable. For a more detailed stability discussion, see Remark~\ref{rem:stab}. Note that the transition from bistable to monostable in the PDE does not coincide with the transition, of the homogeneous family, from codim-2 to codim-0 in the ODE. In contrast, the analogous transitions occur simultaneously for example in the well-known \emph{Allen-Cahn} or \emph{Nagumo} equation.

\begin{figure}[t]
    \centering
    \includegraphics[width=0.6\textwidth, trim=1cm 1cm 1cm 1cm, clip]{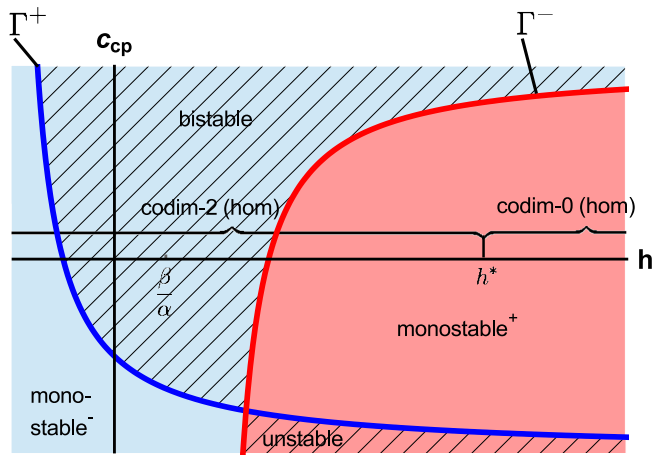}
    \caption{\small Stability diagram of homogeneous states $\pm\3$ in $h$ and $\cc$ for $\alpha=1, \beta=0.5$, and $\mu=-1$. State $+\3$ unstable to the left and stable to the right of $\Gamma^+$, $-\3$ stable to the left and unstable to the right of $\Gamma^-$. Homogeneous DWs (hom) exist only on the $h$-axis, i.e., $\cc\equiv0$. See text for further explanations. Note that also negative applied fields are shown.}
    \label{fig:stability diagram}
\end{figure}

In Figure~\ref{fig:freezselection} below, we present numerical evidence that inhomogeneous DWs are indeed also dynamically selected states, especially for large applied fields, also in the LLG case ($\beta\,,\cc=0$). 

The understanding of DW selection by stability properties generally depends on the existence problem discussed in this paper, which is therefore a  prerequisite for the \emph{dynamical} selection problem, cf.\ Remark~\ref{rem:stab}.

\begin{figure}[ht]
    \centering
    \begin{subfigure}[b]{0.25\textwidth}
    \includegraphics[width=\textwidth]{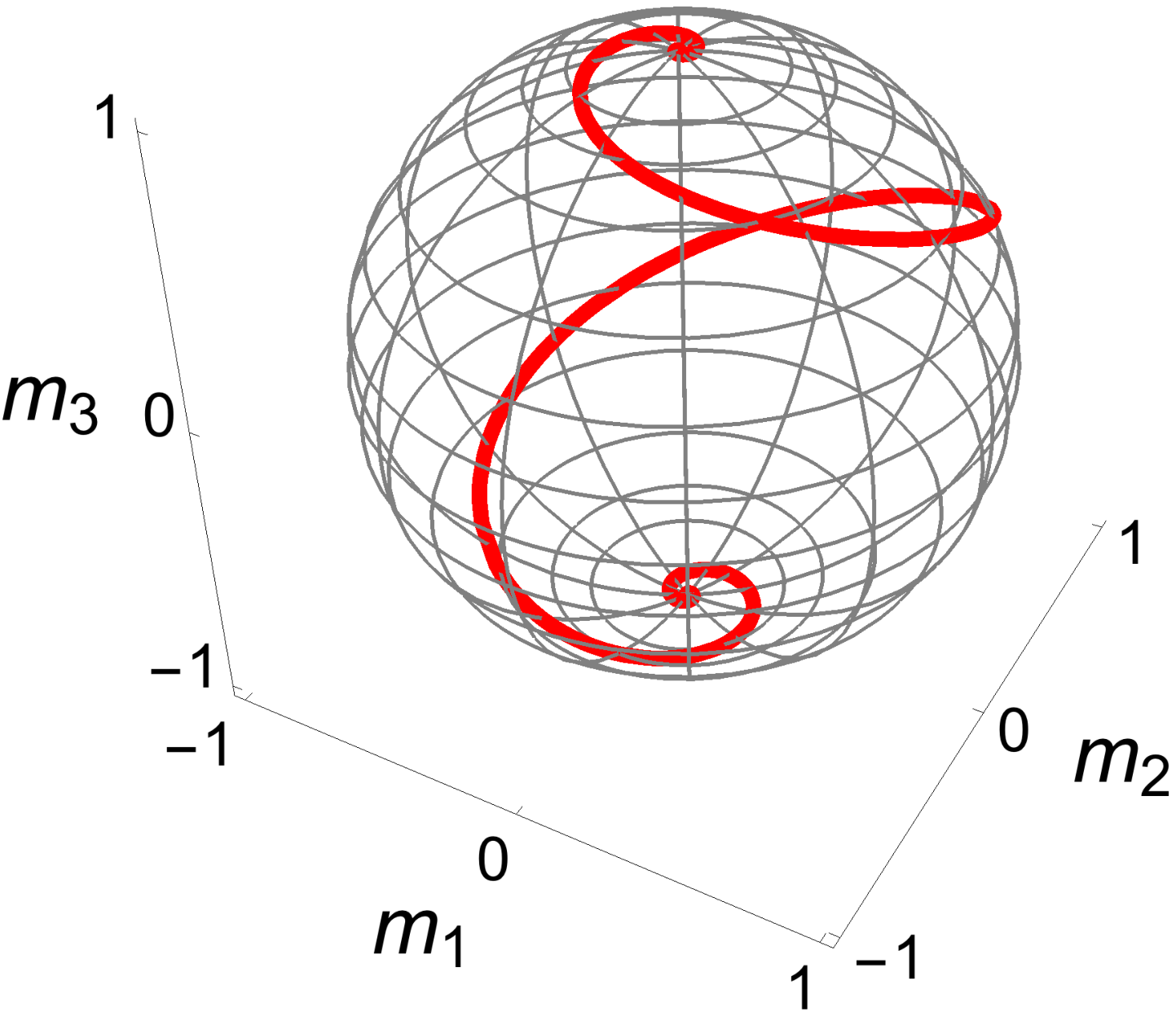}
    \vspace*{3mm}
    \caption{}
    \end{subfigure}
    \hfill
    \begin{subfigure}[b]{0.7\textwidth}
    \includegraphics[width=\textwidth]{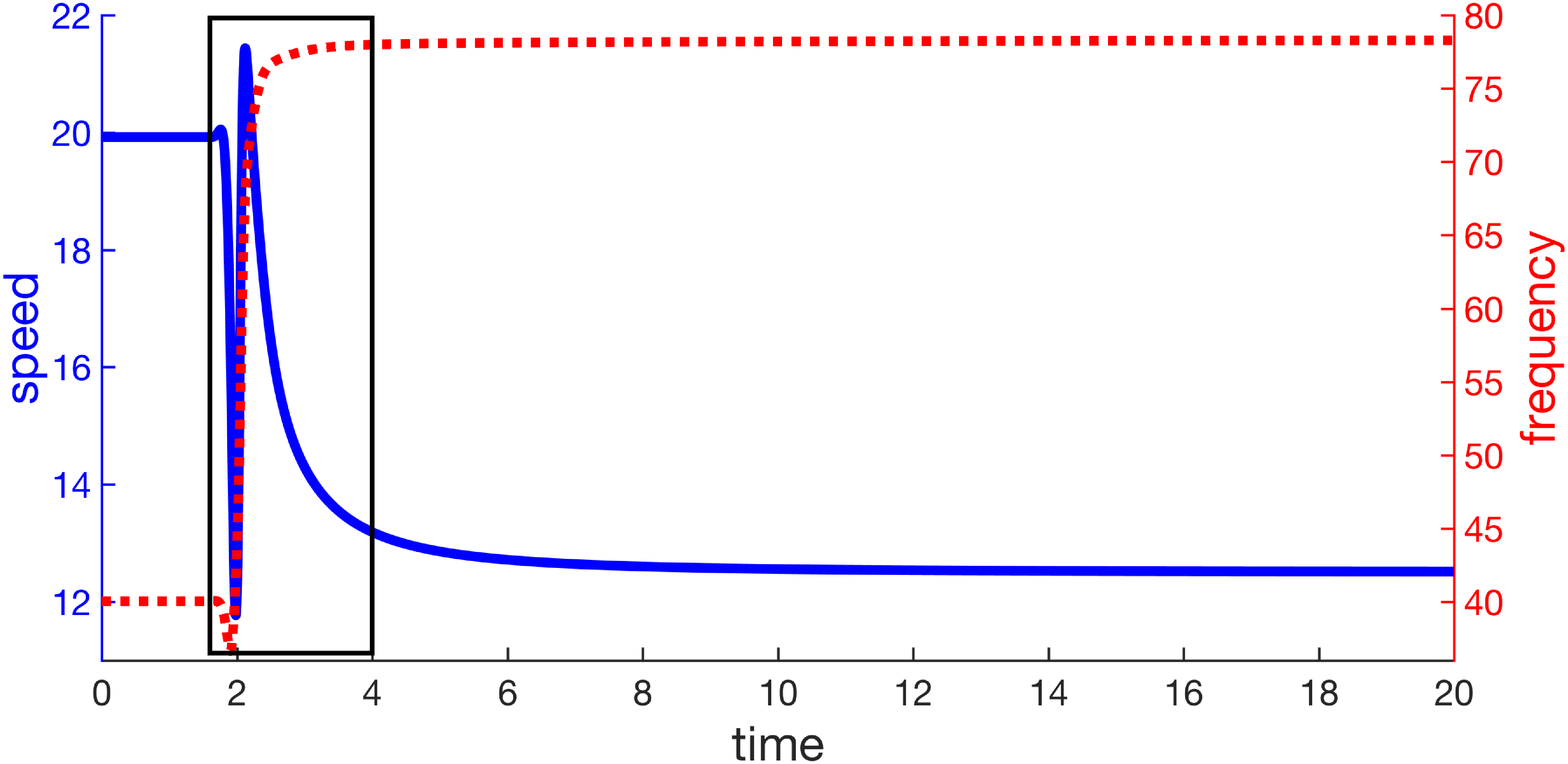}
    \caption{}
    \end{subfigure}
    
    \vspace*{12pt}
    \begin{subfigure}[b]{0.6\textwidth}
    \includegraphics[width=\textwidth,trim={3cm 1cm 6cm 2cm},clip]{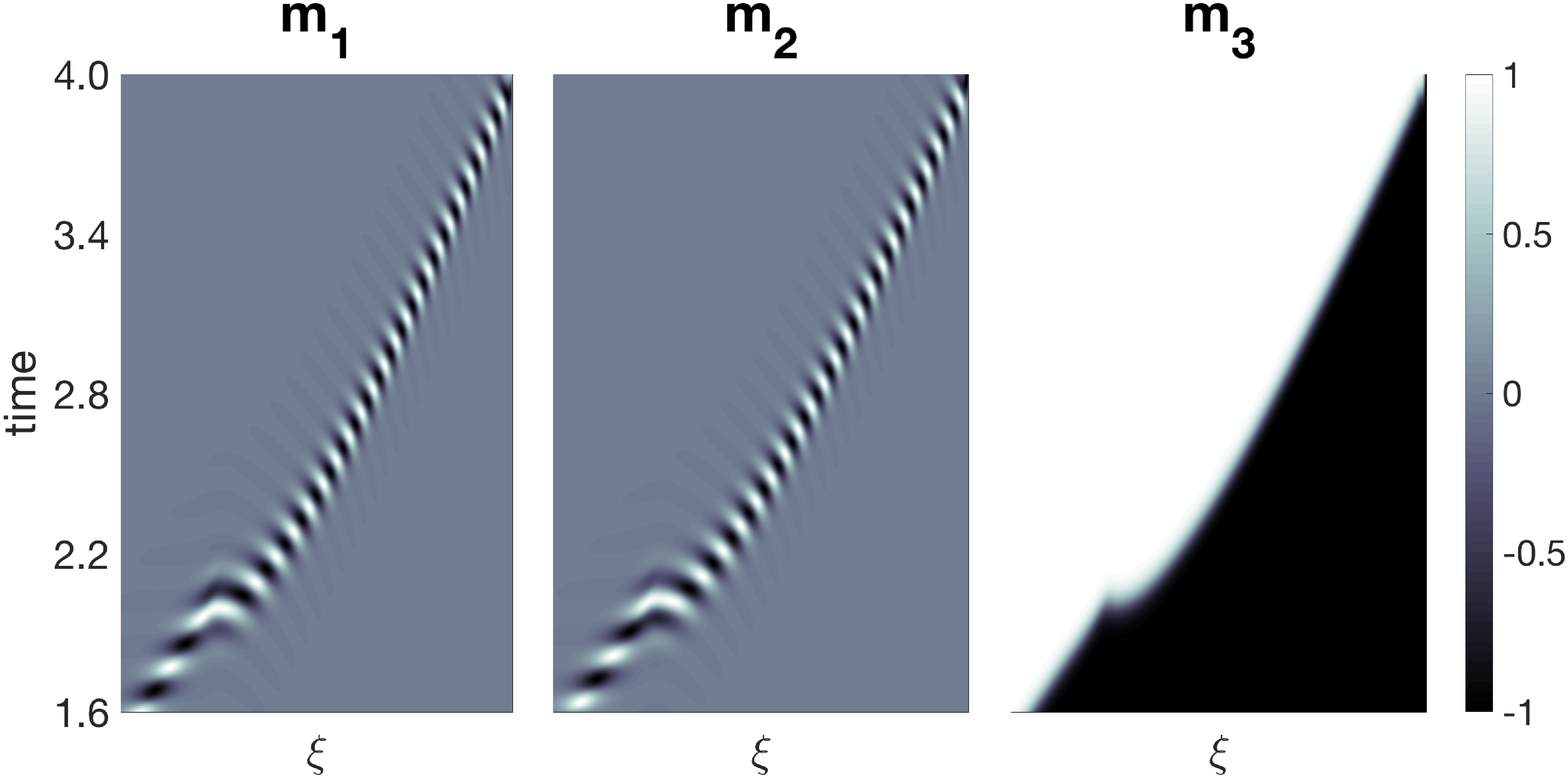}
    \caption{}
    \end{subfigure}
    \caption{\small Direct simulation of full PDE~\eqref{LLGS} for $\alpha=0.5, \beta=0.1, \mu=-1, h=50$, and $\cc=0$ with dynamical selection of an inhomogeneous DW. Initial condition near homogeneous DW~\eqref{eq:explicitsol} in codim-0 regime ($h^\ast=10.2, s_0=19.92$, and $\Omega_0=40.04$). (a) Profile at $t=20$ projected onto the sphere. (b) Speed and frequency of DW over time with asymptotic (selected) values $s=12.5$ and $\Omega=78.28$. (c) Space-time plots of DW components (without co-moving frame), range as in black box in (b). Final profile is heteroclinic connection in~\eqref{eq:coherent2}, cf.\ Proposition~\ref{prop:equilibriumconnection}.}
    \label{fig:freezselection}
\end{figure}

\medskip
To our knowledge, existence results of DWs for $\cc\neq0$ are new. In more detail, the existence of localised inhomogeneous, i.e.\ flat as well as non-flat, DWs for $\cc\neq0$ and especially for $\cc=0$ are new results. Indeed, the existence proof of non-flat DWs is the most technical result and entails an existence proof of heteroclinic orbits in an ODE between an equilibrium and a periodic orbit. These solutions indicate the presence of DWs in other regimes of spin driven phenomena and may be of interest for spin-torque transfer MRAM (Magnetoresistive random-access memory) systems~\cite{aakerman2005toward}.

\medskip
This paper is organized as follows. In \S2, the LLGS equation and coherent structures as
well as first properties are discussed. Section 3 more precisely introduces homogeneous and inhomogeneous as well as flat and non-flat DWs and it also includes the main results of this paper (Theorem 1, 2, and 3). The technical proofs of Theorem 2 as well as Theorem 3 are
deferred to Appendix 6.1 and 6.2. Section~\ref{sec:continuation} presents results of numerical continuation in parameter $\cc$ for the three regimes of the applied field (codim-2, center, and codim-0), where the center case is studied in more detail. We conclude with discussion and outlook in \S\ref{sec:discussion}.

\subsection*{Acknowledgements}
L.S.\ and J.R.\ acknowledge support by the Deutsche Forschungsgemeinschaft (DFG, German Research Foundation) - Projektnummer 281474342/GRK2224/1. J.R. also acknowledges support by DFG grant Ra 2788/1-1.
I.O.\ acknowledges funding of a previous position by Uni Bremen, where most of this paper was written, as well as support by the recent Russian Scientific Foundation grant 19-11-00280.

\section{Model equations and coherent structure form}\label{sec:model}

The classical model for magnetization dynamics was proposed by Landau and Lifschitz based on gyromagnetic precession, and later modified by Gilbert~\cite{landautheory, gilbert2004phenomenological}. See~\cite{lakshmanan2011fascinating} for an overview. The \emph{Landau-Lifschitz-Gilbert} equation for unit vector fields $\m(x,t) \in \mathbb{S}^2$ in one space dimension $x\in\R$ and in terms of normalized time in dimensionless form is
\begin{equation*}\label{LLG}
    \partial_t \m - \alpha \m \times \partial_t \m = -\m \times \h_{\textnormal{eff}}. \tag{LLG}
\end{equation*}
Here $\m=\M/M_S$ represents the normalized magnetization, $\h_{\textnormal{eff}}=\H_{\textnormal{eff}}/M_S$ the effective field, i.e.\ the negative variational derivative of the total magnetic free energy with respect to $\m$, both normalized by the spontaneous magnetization $M_S$. For gyromagnetic ratio $\gamma$ and saturation magnetization $M_S$ the time is measured in units of $(\gamma M_S)^{-1}$, and it is assumed that the temperature of the magnetic body is constant and below the \emph{Curie} temperature~\cite{mayergoyz2009nonlinear}. Finally, \emph{Gilbert} damping $\alpha>0$ turns $\m$ towards $\h_{\textnormal{eff}}$ and both vectors are parallel in the static solution.

\medskip
In modern spin-tronic applications, e.g.~Spin-Transfer Torque Magnetoresistive Random Access Memories (MRAM), the spin of electrons is flipped using a spin-polarized current. To take these effects into account, the~\ref{LLG} equation is supplemented by an additional spin transfer torque term. Using a semiclassical approach, Slonczewski derived an extended effective field 
$$\mathcal{H}_{\textnormal{eff}}= \h_{\textnormal{eff}} - \m \times \boldsymbol{J},$$ 
where $\boldsymbol{J}=\boldsymbol{J}(\m)$ depends on the magnetization and the second term is usually called Slonczewski term~\cite{slonczewski1996current}. In contrast to the~\ref{LLG} equation, which can be written as the gradient of free ferromagnetic energy, this generalized form is no longer variational and the energy is no longer a Lyapunov functional.

\medskip
As to the specific form of $\mathcal{H}_{\textnormal{eff}}$, including a leading order form of exchange interaction, uniaxial crystal anisotropy in direction $\3$, and \emph{Zeeman} as well as stray-field interactions with an external magnetic field, see e.g.~\cite{hubert2008magnetic}, gives the well known form~\eqref{eq:effective field}.

In this paper we consider a constant applied magnetic field $h\in\R$ along $\3$ and uniaxial anisotropy with parameter $\mu \in \R$, for which the anisotropy energy density is rotationally symmetric w.r.t.\ $\3$. According to the energetically preferred direction in the uniaxial case, minima of the anisotropy energy density correspond to \emph{easy} directions, whereas saddles or maxima correspond to \emph{medium-hard} or \emph{hard} directions, respectively. Therefore, one refers to $\mu<0$ as \emph{easy-axis} anisotropy and $\mu>0$ as \emph{easy-plane}, both with regard to $\3$. 

As mentioned before, the LLG equation with its variational structure appears as a special case of~\eqref{LLGS} for $\beta=0$ or $\cc=0$. While our main focus is the non-variational spintronic case $\beta\cdot \cc\neq 0$, all results contain the case $\beta\cdot\cc=0$ and thus carry over to~\eqref{LLG}.

\medskip
It is well-known that~\eqref{LLGS} admits an equivalent form as an explicit evolution equation of quasilinear parabolic type in the form, see e.g.~\cite{Melcher2017},
$$\partial_t \m=\partial_x\left(A(\m)\partial_x\m\right) + B\left(\m,\partial_x\m\right).$$

As a starting point, we briefly note the existence of spatially homogeneous equilibrium solutions of~\eqref{LLGS} for which $\m(x,t)$ is constant in $x$ and $t$.

\begin{Remark} The only $($spatially$)$ homogeneous equilibria of~\eqref{LLGS} for $\beta>0$ are the constant up- and down magnetization states $\pm\3$. Indeed, setting $\partial_t \m = \partial_x^2\m = 0$ in $($\ref{LLGS}$)$, for $\beta\neq0$ the last equation implies that $m_1 = m_2 = 0$ and thus the only solutions $\m_{\pm}^\ast \in \mathbb{S}^2$ are $\m_{\pm}^\ast=(0,0,\pm 1)\tran$.
\end{Remark}

\begin{Remark} In case $\beta=0$ as well as $|h / \mu| < 1$ there exist a family of additional homogeneous solutions of~\eqref{LLGS} given by
$\m^\ast=(m_1,m_2,h/\mu)\tran,$
with $m^2_1 + m^2_2 = 1 - (h / \mu)^2$. Note that similar cases occur for symmetry axis being $\boldsymbol{e}_1$ and $\boldsymbol{e}_2$, respectively $($cf.\ Brown's equations$)$.
\end{Remark}

\subsection{Coherent structure ODE}\label{sec:coherent structure ODE}
Due to the rotation symmetry around the $\3$-axis of~\eqref{LLGS}, it is natural to use  spherical coordinates
$$
\m = \begin{pmatrix}
\cos(\varphi) \sin(\theta) \\ \sin(\varphi) \sin(\theta) \\ \cos(\theta)
\end{pmatrix},
$$ 
where $\varphi = \varphi(x, t)$ and $\theta = \theta(x, t)$. This changes~\eqref{LLGS} to
\begin{align}\label{eq:spherical}
\begin{split}
\begin{pmatrix}
\alpha & -1 \\ 1 & \alpha
\end{pmatrix} \begin{pmatrix} \partial_t\vph \sin (\theta) \\ -\partial_t \theta \end{pmatrix} = &\begin{pmatrix} 2 \partial_x \vph \partial_x\theta \cos (\theta) \\ - \partial_x^2 \theta \end{pmatrix}\\ &+\sin (\theta) \begin{pmatrix} \partial_x^2 \vph + \beta / \left(1+\cc \cos (\theta)\right) \\(\partial_x \vph)^2 \cos (\theta) + h-\mu \cos (\theta) \end{pmatrix}
\end{split}
\end{align}

Note that the rotation symmetry has turned into the shift symmetry in the azimutal angle $\varphi$, as \eqref{eq:spherical} depends on derivatives of $\varphi$ only.

\medskip Recall that DW solutions spatially connect the up and down magnetization states $\pm \3$ in a coherent way as relative equilibria with respect to the translation symmetry in $x$ and $\phi$, which yields the ansatz
\begin{equation}\label{eq:ansatz}
\xi\coloneqq x - st, \,\theta =\theta(\xi),\,\vph=\phi(\xi)+\Omega t.
\end{equation}
Such solutions are generalized travelling waves that move with constant speed $s\in \R$ in space and rotate pointwise with a constant frequency $\Omega \in \R$ around the $\3$-axis; solutions with $\Omega=0$ are classical travelling waves.

As in~\cite{Melcher2017}, applying ansatz~\eqref{eq:ansatz} to~\eqref{eq:spherical} leads to the so-called \emph{coherent structure ODE}  
\begin{equation}\label{eq:coherent structure ODE}
\begin{split}
\begin{pmatrix}
\alpha & -1 \\ 1 & \alpha
\end{pmatrix} \begin{pmatrix} (\Omega - s\phi')\sin(\theta)\\s\theta' \end{pmatrix} = &\begin{pmatrix} 2 \phi'\theta' \cos (\theta) \\ -\theta'' \end{pmatrix}\\ &+\sin (\theta) \begin{pmatrix} \phi'' + \beta / \left(1+\cc \cos (\theta)\right) \\(\phi')^2 \cos (\theta) + h-\mu \cos (\theta) \end{pmatrix},
\end{split}
\end{equation}
where $'=d/d\xi$. This system of two second-order ODEs does not depend on $\phi$ and thus reduces to three dynamical variables $(\theta, \psi = \theta', q = \phi')$. Following standard terminology for coherent structures, we refer to $q$ as the \emph{local wavenumber}.

\medskip
Writing \eqref{eq:coherent structure ODE} as a first-order three-dimensional system gives
\begin{equation}\label{eq:coherent1}
    \begin{array}{rl}
    \theta' &= \psi\\
    \psi' &= \sin(\theta) \left[h - \Omega +sq + (q^2-\mu) \cos(\theta)\right] -\alpha s \psi\\
    q' &= \alpha \Omega - \beta/(1+\cc\cos(\theta)) - \alpha sq - \frac{s+2q \cos(\theta)}{\sin (\theta)}\psi
    \end{array},
\end{equation}
and DWs in the original PDE are in 1-to-1-correspondence with the ODE solutions connecting $\theta=0$ and $\theta=\pi$.

\subsubsection{Blow-up charts and asymptotic states}\label{sec:blow-up} As in~\cite{Melcher2017}, the  singularities at zeros of $\sin (\theta)$ in~\eqref{eq:coherent1} can be removed by the singular coordinate change $\psi:=p\sin(\theta)$, which is a blow-up transformation mapping the poles of the sphere $\pm \3$ to circles thus creating a cylinder. The resulting \emph{desingularized} system reads

\begin{equation}\label{eq:coherent2}
    \begin{array}{rl}
    \theta' &= \sin(\theta)p\\
    p' &= h - \Omega -\alpha s p +sq - (p^2-q^2+\mu) \cos(\theta)\\
    q' &= \alpha \Omega - \beta/(1+\cc\cos(\theta)) -sp - \alpha sq - 2pq\cos(\theta).
    \end{array}
\end{equation}

The coherent structure system~\eqref{eq:coherent1} is equivalent to the desingularized system~\eqref{eq:coherent2} for $\theta \neq n \pi\,, n \in \Z$ and therefore also for $\m$ away from $\pm \3$. Furthermore, the planar \emph{blow-up charts} $\theta = 0$ and $\theta = \pi$ are invariant sets of~\eqref{eq:coherent2}, which are mapped to the single points $\3$ and $-\3$, respectively by the blow-down transformation. System~\eqref{eq:coherent2} has a special structure (cf.\ Figure~\ref{fig:streamplot}) that will be relevant for the subsequent DW analysis. In the remainder of this section we analyze this in some detail.

\begin{Lemma}\label{lem:explicit}
Consider the equations for $p$ and $q$ in \eqref{eq:coherent2} for an artificially fixed value of $\theta$. In terms of $z\coloneqq p+\rmi q$ this subsystem can be written as the complex (scalar) ODE 
\begin{equation}\label{eq:complex ODE}
    z'=Az^2+Bz+C,
\end{equation}
where
$
A\coloneqq -\cos(\theta)\,, B\coloneqq -(\alpha+\rmi)s\,, \textnormal{ and }C^\theta\coloneqq h-\Omega +A \mu + \rmi\left(\alpha \Omega - \frac{\beta}{1-A\cc}\right)$. 

For $A\neq 0$ the solution with $z_0=z(\xi_0)$ away from the equilibria $z^\theta_+=-\frac{B}{2A} +\rmi\frac{\gamma^\theta}{2A}$ and $z^\theta_-=-\frac{B}{2A} -\rmi\frac{ \gamma^\theta}{2A}$, with $\gamma^\theta=\gamma(\theta) \coloneqq \sqrt{\textstyle{4AC^\theta-B^2}}$ , reads
\begin{equation}\label{eq:explicitsol}
z(\xi)=\frac{\gamma^\theta}{2A}\tan\left(\frac{\gamma^\theta}{2}\xi+\delta_0\right)-\frac{B}{2A}\,,
\end{equation}
where $$\delta_0=\arctan\left(\frac{2Az_0+B}{\gamma^\theta} \right)-\frac{\gamma^\theta}{2}\xi_0.$$
For $A=0$, the solution away from the equilibrium $z^{\pi/2}=-C^{\pi/2}/B$ is given by 
$$z(\xi)=\left(z_0+\frac{C^{\pi/2}}{B}\right)\rme^{B(\xi-\xi_0)}-\frac{C^{\pi/2}}{B}.$$
\end{Lemma}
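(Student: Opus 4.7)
The plan is to first verify that, with $\theta$ treated as a parameter, the two real equations for $p'$ and $q'$ in \eqref{eq:coherent2} combine into a single scalar complex Riccati equation in $z=p+\rmi q$, and then to integrate that equation by elementary means.

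For the reduction, I would form $z'=p'+\rmi q'$ directly from \eqref{eq:coherent2}. The nonlinear terms assemble as $-\cos(\theta)(p^2-q^2+2\rmi pq)=-\cos(\theta)\,z^2=Az^2$ via $(p+\rmi q)^2=p^2-q^2+2\rmi pq$. The terms linear in $(p,q)$ give $(-\alpha s p + sq)+\rmi(-sp-\alpha sq)$, whose coefficients of $p$ and $q$ are $-(\alpha+\rmi)s$ and $s(1-\rmi\alpha)=-\rmi(\alpha+\rmi)s$, so this bundles into $-(\alpha+\rmi)s(p+\rmi q)=Bz$. Finally the remaining $\theta$-dependent constants group into $h-\Omega-\mu\cos(\theta)+\rmi(\alpha\Omega-\beta/(1+\cc\cos(\theta)))$, which with $A=-\cos(\theta)$ matches $C^\theta$ exactly (using $1+\cc\cos(\theta)=1-A\cc$). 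This establishes \eqref{eq:complex ODE}.

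For the explicit solution when $A\neq 0$, I would use the standard Riccati manipulation: shift $z=w-B/(2A)$ to eliminate the linear term, which gives $w'=Aw^2+(4AC^\theta-B^2)/(4A)=Aw^2+(\gamma^\theta)^2/(4A)$. Separating variables and substituting $u=2Aw/\gamma^\theta$ collapses the left-hand side to $(2/\gamma^\theta)\,du/(1+u^2)$, whose antiderivative is $(2/\gamma^\theta)\arctan(u)$. Integration yields $\arctan(u)=(\gamma^\theta/2)\xi+\delta_0$, and unwinding the two substitutions produces the formula \eqref{eq:explicitsol}. Imposing $z(\xi_0)=z_0$ leads to $\tan\bigl((\gamma^\theta/2)\xi_0+\delta_0\bigr)=(2Az_0+B)/\gamma^\theta$, which gives the displayed $\arctan$ expression for $\delta_0$. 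As a consistency check, the fixed points of \eqref{eq:complex ODE} are the roots of $Az^2+Bz+C^\theta=0$, and $\sqrt{B^2-4AC^\theta}=\pm\rmi\gamma^\theta$ puts them in the stated form $z_\pm^\theta=-B/(2A)\pm\rmi\gamma^\theta/(2A)$.

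The degenerate case $A=0$ (i.e.\ $\theta=\pi/2$) is simpler: \eqref{eq:complex ODE} becomes the linear equation $z'=Bz+C^{\pi/2}$ with $B=-(\alpha+\rmi)s\neq 0$ (the truly degenerate subcase $s=0$ has no isolated equilibrium and is excluded by the statement). Its unique equilibrium is $z^{\pi/2}=-C^{\pi/2}/B$, and the shifted variable $\zeta=z-z^{\pi/2}$ satisfies $\zeta'=B\zeta$, integrating immediately to the stated exponential. The only mild subtlety throughout is that $\gamma^\theta$, and hence $\arctan$ in \eqref{eq:explicitsol}, are in general complex-valued; this is harmless because \eqref{eq:explicitsol} is an algebraic identity that can be verified a posteriori by differentiation and substitution into \eqref{eq:complex ODE}, so branch choices of the square root and arctan do not affect the final result. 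I do not anticipate a serious obstacle here: the lemma is essentially a direct integration of a scalar constant-coefficient Riccati equation.
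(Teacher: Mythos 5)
Your proposal is correct and matches the paper's approach; the paper's proof is simply the one-line remark that one ``readily verifies the claimed form of the ODE and directly checks the claimed solutions,'' and your argument carries out exactly that verification (the real-to-complex bundling of the $p,q$ equations, the standard Riccati shift and separation of variables, and the linear case $A=0$) in full detail. The computations check out, including the identification of the equilibria as the roots of $Az^2+Bz+C^\theta=0$ and the remark that complex branch choices are immaterial since the formula can be confirmed by differentiation.
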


Clearly, the solution of~\eqref{eq:complex ODE} relates only to those solutions of \eqref{eq:coherent2} for which $\theta$ is constant, i.e., $\theta=0, \pi$. Although we are mostly interested in the dynamics on the blow-up charts, we consider $\theta$ as a parameter in order to demonstrate the special behaviour of~\eqref{eq:coherent2} for $\theta$ artificially fixed. Notably, the equilibria $z_\pm^\theta$ of~\eqref{eq:complex ODE} for $\theta\neq0, \pi$ are not equilibria in the full dynamics, due to the fact that~\eqref{eq:coherent2} is only invariant for $\theta$ at the blow-up charts.

\begin{proof}
We readily verify the claimed form of the ODE and directly check the claimed solutions.
\end{proof}

\begin{Remark}\label{rem:equilibria}
Lemma~\ref{lem:explicit} states in particular that the desingularized ODE system~\eqref{eq:coherent2} can be solved explicitly on the invariant blow-up charts, where $\theta=0,\pi$ and thus $A=-1,1$, respectively. System~\eqref{eq:coherent2} possesses two real equilibria on each blow-up chart, 
$Z^0_\pm\coloneqq(0,p^0_\pm,q^0_\pm)\tran$ and $Z^\pi_\pm\coloneqq(\pi,p^\pi_\pm\,,q^\pi_\pm)\tran$.
Here $p^\theta_\sigma\coloneqq\Re(z^\theta_\sigma)\,,q^\theta_\sigma\coloneqq\Im(z^\theta_\sigma)$ for $\theta=0,\pi$, $\
\sigma=\pm$ and
$$z^0_+\coloneqq 1/2(B-\rmi\gamma^0)\,,\quad z^0_-\coloneqq 1/2(B+\rmi \gamma^0)$$
and analogously
$$z^\pi_+\coloneqq 1/2(-B+\rmi\gamma^\pi)\,,\quad z^\pi_-\coloneqq 1/2(-B-\rmi \gamma^\pi)\,,$$
where we set
$$\gamma^0\coloneqq \gamma\big\vert_{A=-1}=\sqrt{-4C^0-B^2} \quad \textnormal{and} \quad \gamma^\pi\coloneqq \gamma\big\vert_{A=1}=\sqrt{4C^\pi-B^2}$$
with $C^0\coloneqq C\big\vert_{A=-1}$ and $C^\pi\coloneqq C\big\vert_{A=1}$.
\end{Remark}

Due to the analytic solution~\eqref{eq:explicitsol}, we obtain the following more detailed result in case $\theta\neq\pi/2$ (cf.\ Figure~\ref{fig:streamplot}).

\begin{Lemma}\label{lem:chartconnections}
For each given $0\le\theta\le\pi$ with $\theta\neq \pi/2$ as a parameter, the fibers of~\eqref{eq:coherent2} with constant $\theta$ consists entirely of heteroclinic orbits between $z^\theta_-$ and $z^\theta_+$ in case $\Im(\gamma^\theta)\neq0$, or $\gamma^\theta\neq 0$, except for the equilibrium states. In case $\Im(\gamma^\theta)=0$ and $\Re(\gamma^\theta)\neq0$, the fiber at fixed $\theta$ is filled with periodic orbits away from the invariant line $\{q=\frac{s}{2A}\}$, for which the period of solutions close to it tends to infinity.
\end{Lemma}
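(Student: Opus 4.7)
My plan is to exploit Lemma~\ref{lem:explicit} at each fixed $\theta\neq\pi/2$ (so $A=-\cos\theta\neq 0$) and to recast the explicit solution~\eqref{eq:explicitsol} in the more transparent Möbius form. Writing $z'=A(z-z^\theta_+)(z-z^\theta_-)$ and setting $W(\xi)\coloneqq (z(\xi)-z^\theta_+)/(z(\xi)-z^\theta_-)$, a short calculation using $z^\theta_+-z^\theta_-=\rmi\gamma^\theta/A$ gives
\begin{equation*}
W'=A(z^\theta_+-z^\theta_-)\,W=\rmi\gamma^\theta\,W, \qquad W(\xi)=W_0\,\rme^{\rmi\gamma^\theta\xi},
\end{equation*}
which is equivalent to \eqref{eq:explicitsol} via the half-angle form of $\tan$, and will drive the case distinction.

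For the heteroclinic case I would argue as follows. If $\Im(\gamma^\theta)\neq 0$, then $|W(\xi)|=|W_0|\,\rme^{-\Im(\gamma^\theta)\xi}$ is strictly monotonic and tends to $0$ at one infinity and to $+\infty$ at the other. Since the Möbius map $z\mapsto W$ sends $z^\theta_+\mapsto 0$ and $z^\theta_-\mapsto \infty$, these limits force $z(\xi)\to z^\theta_\pm$ as $\xi\to\pm\infty$ (with the pairing determined by $\sgn\Im(\gamma^\theta)$), so every non-equilibrium orbit is heteroclinic between $z^\theta_-$ and $z^\theta_+$. For the degenerate case $\gamma^\theta=0$ the equilibria coalesce at $z^\theta_0\coloneqq -B/(2A)$ and a direct integration of $z'=A(z-z^\theta_0)^2$ gives $z(\xi)-z^\theta_0=1/(A(\xi_1-\xi))$, which approaches $z^\theta_0$ from both sides; this is the correct degenerate interpretation of the statement.

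For the periodic case $\gamma^\theta\in\R\setminus\{0\}$, the identity $|W(\xi)|=|W_0|$ places the orbit on the Apollonian locus $\{z\in\C:|z-z^\theta_+|=|W_0|\,|z-z^\theta_-|\}$, which is a round circle when $|W_0|\neq 1$ and a straight line when $|W_0|=1$. Because $z^\theta_+-z^\theta_-=\rmi\gamma^\theta/A$ is purely imaginary (both $\gamma^\theta$ and $A$ are real here), the perpendicular bisector is horizontal and passes through $-B/(2A)$, whose imaginary part equals $s/(2A)$; hence the degenerate locus is exactly $\{q=s/(2A)\}$. For $|W_0|\neq 1$ the orbit is a closed circle, i.e.\ a periodic orbit in $(p,q)$. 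As a cross-check for invariance, I would substitute $q=s/(2A)$ into the $q'$-equation of~\eqref{eq:coherent2}; the terms $-sp$ and $-2pq\cos\theta$ cancel identically, leaving $q'=[\alpha\Omega-\beta/(1+\cc\cos\theta)]-\alpha s^2/(2A)$, which vanishes precisely because the hypothesis "$\gamma^\theta\in\R$" is the condition $\Im(4AC^\theta-B^2)=0$, i.e.\ $2A[\alpha\Omega-\beta/(1+\cc\cos\theta)]=\alpha s^2$.

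The main subtlety I expect is the statement about the period tending to infinity near the invariant line. The closed Möbius formula assigns the constant period $2\pi/|\gamma^\theta|$ to every bounded orbit, so "period" here cannot mean the $\xi$-period of $z(\xi)$; rather, as $|W_0|\to 1$ the Apollonian circle's diameter diverges and the orbit passes through ever-larger neighbourhoods of infinity before returning. I would therefore phrase the blow-up as the divergence of the orbit's diameter (equivalently, as the passage time through any fixed compact set relative to the arclength along the orbit), which is the content actually used later, and which is an immediate consequence of the Möbius formula since $W_0\to $ (unit circle) corresponds to $\delta_0$ in~\eqref{eq:explicitsol} acquiring a vanishing imaginary part and the $\tan$-singularities approaching the real axis.
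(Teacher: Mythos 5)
Your proposal is correct on the substantive claims and takes a genuinely different route from the paper. The paper works directly with the closed form \eqref{eq:explicitsol}, expands the complex tangent into $\sin$, $\sinh$, $\cos$, $\cosh$, and reads off the asymptotic limits and the periodicity criterion $\Im(\delta_0)\neq0$ case by case; you instead linearize the Riccati equation through the cross-ratio $W=(z-z^\theta_+)/(z-z^\theta_-)$, so that $W'=\rmi\gamma^\theta W$ and the entire case distinction reduces to the behaviour of $|W(\xi)|=|W_0|\,\rme^{-\Im(\gamma^\theta)\xi}$. This buys a more transparent picture: the heteroclinic limits follow from monotonicity of $|W|$, and in the real-$\gamma^\theta$ case the orbits are exactly the Apollonian circles $|z-z^\theta_+|=|W_0|\,|z-z^\theta_-|$, with the invariant line arising as the degenerate locus $|W_0|=1$, i.e.\ the perpendicular bisector $\{q=s/(2A)\}$ of the vertical segment joining $z^\theta_\pm$; your direct check that $q'=0$ there under $\Im(4AC^\theta-B^2)=0$ is also correct, as is your reading of ``$\gamma^\theta\neq0$'' in the statement as the degenerate case $\gamma^\theta=0$ of coalescing equilibria. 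One caveat shared with the paper's proof: when $\Im(\gamma^\theta)\neq0$ there is a codimension-one set of initial data for which $W(\xi^\ast)=1$ at some finite $\xi^\ast$, i.e.\ finite-time blow-up, so strictly those orbits connect an equilibrium to infinity rather than $z^\theta_-$ to $z^\theta_+$ within $\C$; the paper glosses over this in exactly the same way, so it is a defect of the lemma's formulation rather than of your argument.

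You are also right to question the final clause. Every bounded orbit in the case $\gamma^\theta\in\R\setminus\{0\}$ has the same $\xi$-period $2\pi/|\gamma^\theta|$ --- visible both from $W=W_0\,\rme^{\rmi\gamma^\theta\xi}$ and from the paper's own expression, which is $\pi$-periodic in $\hat{\xi}$ independently of $\delta_0$ --- so the period does not tend to infinity near the invariant line, and the paper's proof in fact never argues that it does; it only derives the periodicity criterion. What does diverge is the diameter of the orbits, which make ever larger excursions towards infinity as $|W_0|\to1$, and your reformulation in those terms is the defensible version of that claim.
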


\begin{proof}For $\theta$ fixed in~\eqref{eq:complex ODE}, consider the case $\Re(\gamma^\theta)=0$ and also $\Im(\gamma^\theta)\neq0$ for $A\neq0$ which leads to
\begin{align*}z(\xi)&=\rmi \frac{\Im(\gamma^\theta)}{2A}\cdot \tan\bigg(\rmi\Big( \underbrace{\frac{\Im(\gamma^\theta)}{2}\xi+\Im(\delta_0)}_{\eqqcolon \check{\xi}}\Big)+\Re(\delta_0)\bigg)-\frac{B}{2A}\\
&=\frac{\Im(\gamma^\theta)}{2A}\cdot\frac{\rmi\sin\left( 2\Re(\delta_0) \right) - \sinh \left( 2\check{\xi} \right)}{\cos\left(2\Re(\delta_0)\right) + \cosh\left( 2\check{\xi} \right)} - \frac{B}{2A}\,.
\end{align*}
For $\Re(\gamma^\theta)\neq0$ as well as $\Im(\gamma^\theta)\neq0$, we obtain
\begin{align*}z(\xi)&=\frac{\gamma^\theta}{2A}\tan \bigg( \underbrace{\frac{\Re(\gamma^\theta)}{2}\xi + \Re(\delta_0)}_{\eqqcolon \tilde{\xi}}+\rmi \frac{\Im(\gamma^\theta)}{2}\xi +\rmi \Im(\delta_0) \bigg) - \frac{B}{2A}\\&=\frac{\gamma^\theta}{2A} \cdot \frac{\sin(2\tilde{\xi}) + \rmi \sinh\left(2\left(\frac{\Im(\gamma^\theta)}{\Re(\gamma^\theta)}\tilde{\xi}- \frac{\Im(\gamma^\theta)}{\Re(\gamma^\theta)}\Re(\delta_0)+\Im(\delta_0)\right)\right)}{\cos(2\tilde{\xi})+\cosh\left(2\left(\frac{\Im(\gamma^\theta)}{\Re(\gamma^\theta)}\tilde{\xi}- \frac{\Im(\gamma^\theta)}{\Re(\gamma^\theta)}\Re(\delta_0)+\Im(\delta_0)\right)\right)}-\frac{B}{2A}\,,
\end{align*}
The asymptotic states are 
$$\Im\left(\gamma^\theta\right)>0:\qquad \lim_{\xi\to-\infty}z(\xi)=-\rmi \frac{\gamma^\theta}{2A}-\frac{B}{2A}\,,\quad \lim_{\xi\to+\infty}z(\xi)=\rmi \frac{\gamma^\theta}{2A}-\frac{B}{2A}\,,$$
as well as
$$\Im\left(\gamma^\theta\right)<0:\qquad \lim_{\xi\to-\infty}z(\xi)=\rmi \frac{\gamma^\theta}{2A}-\frac{B}{2A}\,,\quad \lim_{\xi\to+\infty}z(\xi)=-\rmi \frac{\gamma^\theta}{2A}-\frac{B}{2A}\,,$$
which simplify in case $\Re(\gamma^\theta)=0$ to
$$\Im\left(\gamma^\theta\right)>0:\qquad \lim_{\xi\to-\infty}z(\xi)=\frac{\Im(\gamma^\theta)-B}{2A}\,,\quad \lim_{\xi\to+\infty}z(\xi)=-\frac{\Im(\gamma^\theta)+B}{2A}\,,$$
as well as
$$\Im\left(\gamma^\theta\right)<0:\qquad \lim_{\xi\to-\infty}z(\xi)=-\frac{\Im(\gamma^\theta)+B}{2A}\,,\quad \lim_{\xi\to+\infty}z(\xi)=\frac{\Im(\gamma^\theta)-B}{2A}\,.$$
Note that the asymptotic states coincide if $\gamma^\theta=0$.

The last case to consider is $\Re\left(\gamma^\theta\right)\neq0$ and $\Im\left(\gamma^\theta\right)=0$, where the solutions are
$$z(\xi)=\frac{\Re\left(\gamma^\theta\right)}{2A}\cdot \frac{\sin(2\hat{\xi})+\rmi\sinh(2\Im(\delta_0))}{\cos(2\hat{\xi})+\cosh(2\Im(\delta_0))}-\frac{B}{2A}\,,$$
with $\hat{\xi}\coloneqq \frac{\Re(\gamma^\theta)}{2}\xi+\Re(\delta_0)$ and which leads to periodic solutions of~\eqref{eq:complex ODE} iff
\begin{align*}\Im(\delta_0)\neq 0&\Leftrightarrow 2A\Im(z_0)+\Im(B)\neq0\Leftrightarrow q_0 \neq \frac{s}{2A}\,,
\end{align*}
where $z_0=p_0+\rmi q_0$ and recall that $B=-(\alpha+\rmi)s$.
\end{proof}

Based on Lemma~\ref{lem:chartconnections}, explicitly on the blow-up chart $\theta=0$ the heteroclinic orbits are from $z_-^0$ to $z_+^0$ in case $\Im(-4C^0-B^2)>0$, or $\Im(-4C^0-B^2)=0$ and $\Re(-4C^0-B^2)\le 0$, and from $z_+^0$ to $z_-^0$ if $\Im(-4C^0-B^2)<0$ . For $\theta=\pi$, if $\Im(4C^\pi-B^2)>0$, or $\Im(4C^\pi-B^2)=0$ and $\Re(4C^\pi-B^2)\le 0$ they are connections from $z_-^\pi$ to $z_+^\pi$, and if $\Im(4C^\pi-B^2)<0$ from $z_+^\pi$ to $z_-^\pi$.

For $A\neq0$, the case $s=0$ is a special situation, which will be also discussed in the context of DWs in \S~\ref{sec:domain walls} later. It turns out that on the blow-up charts $\theta=0$ (or $\theta=\pi$), the solution with appropriate initial conditions has a limit as $|\xi|\to \infty$ if and only if $\Im(\sqrt{\textstyle{-C^0}})\neq0$ ($\Im(\sqrt{\textstyle{C^\pi}})\neq 0$). In terms of the parameters in~\eqref{eq:coherent2} and with
$$\b^-\coloneqq \frac{\beta}{1-\cc} \qquad \textnormal{and} \qquad \b^+\coloneqq \frac{\beta}{1+\cc}\,,$$
this leads to the conditions for $\theta=0$ given by:
\begin{equation}\label{eq:frequenzyzerospeedleft}
    \Omega\neq \frac{\b^+}{\alpha}\quad \textnormal{or} \quad \Omega=\frac{\b^+}{\alpha} \textnormal{ and } \Omega\le h-\mu\,,
\end{equation}
and for $\theta=\pi$ given by:
\begin{equation}\label{eq:frequenzyzerospeedright}
    \Omega\neq \frac{\b^-}{\alpha}\quad \textnormal{or} \quad \Omega=\frac{\b^-}{\alpha} \textnormal{ and } \Omega\ge h+\mu\,,
\end{equation}
In case $\cc=0$, i.e.\ for the~\ref{LLG} equation, the conditions in~\eqref{eq:frequenzyzerospeedleft} and~\eqref{eq:frequenzyzerospeedright} reduce to $$\Omega\neq \frac{\beta}{\alpha} \quad \textnormal{or} \quad \Omega=\frac{\beta}{\alpha} \textnormal{ and } 2\mu\le \frac{\beta}{\alpha}\,,$$ where the latter inequality always holds in case of a nanowire geometry ($\mu<0$). Hence standing domain walls in nanowires in case $\cc=0$ can only connect equilibria, if they exist.

\medskip
Lemma~\ref{lem:chartconnections} also states that the equilibria on the blow-up charts $\theta\in\{0,\pi\}$ are surrounded by periodic orbits in case $\Im(\gamma^0)=0$ and $\Re(\gamma^0)\neq0$ ($\Im(\gamma^\pi)=0$ and $\Re(\gamma^\pi)\neq0$). In fact, system~\eqref{eq:coherent2} is Hamiltonian (up to rescaling) on the blow-up charts for certain frequencies $\Omega$, as follows

\begin{figure}[ht]
    \centering
    \begin{subfigure}[b]{0.25\textwidth}
        \centering
        \includegraphics[width=\textwidth]{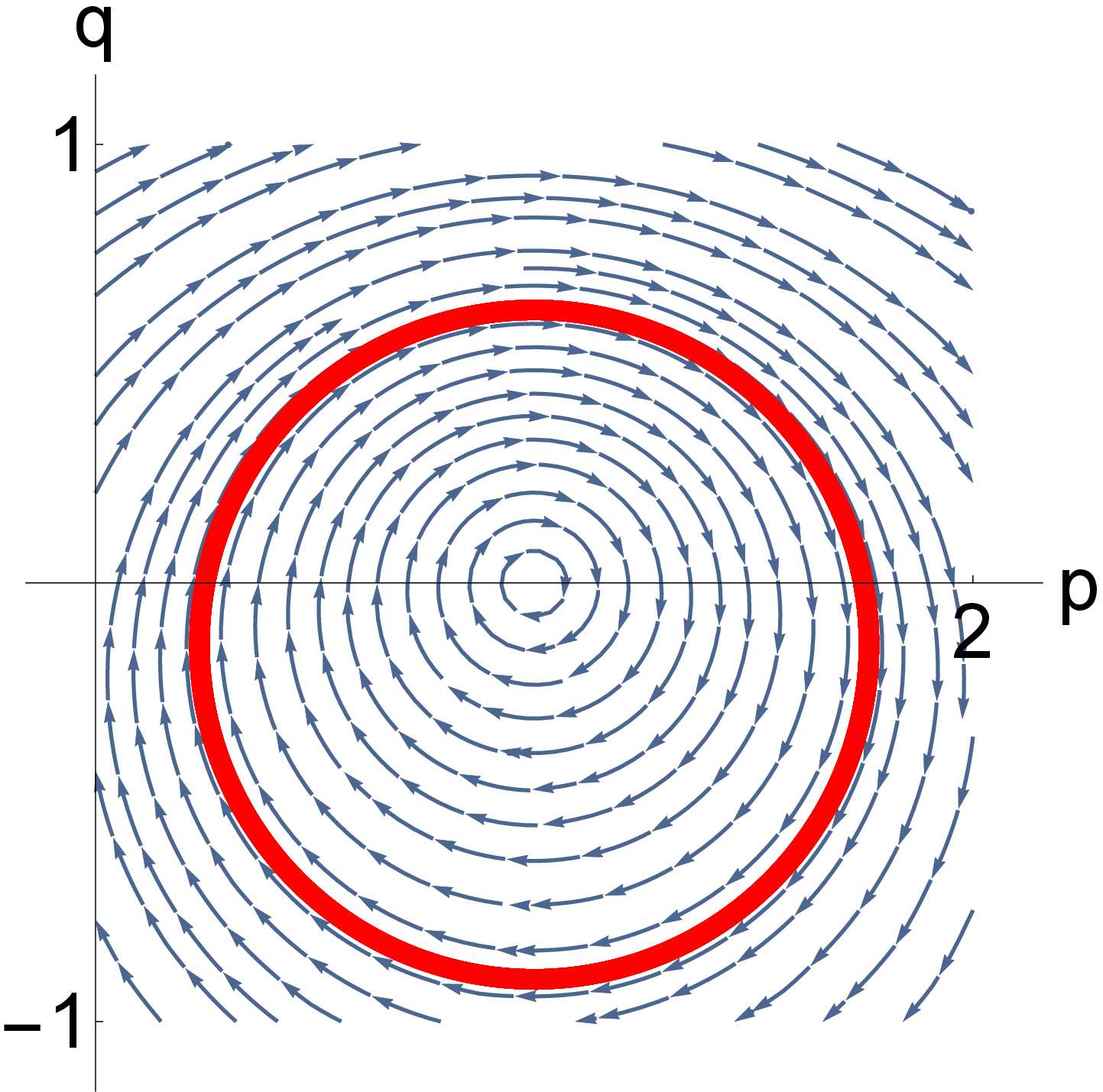}
        \caption{}
        \label{fig:hamiltonphase}
    \end{subfigure}
    \hspace*{20mm}
    \begin{subfigure}[b]{0.25\textwidth}
        \centering 
        \includegraphics[width=\textwidth]{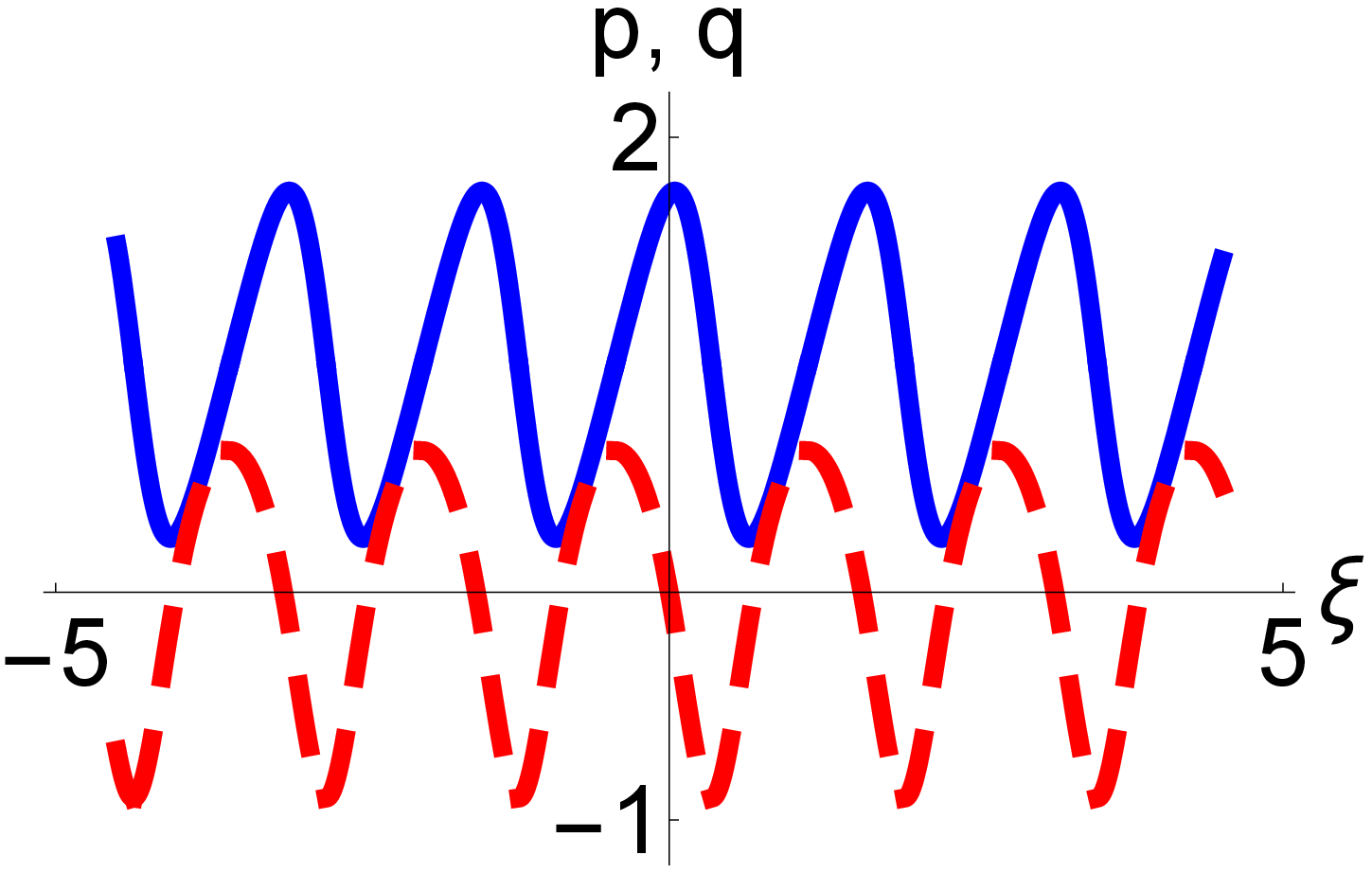}
        \vspace*{3mm}
        \caption{}
        \label{fig:initial arctan hamilton}
    \end{subfigure}
    \caption{\small (a) Phase plane streamplot with \textsc{Mathematica} of~\eqref{eq:hamilton} around the equilibrium $z^\pi_-$, i.e., \eqref{eq:coherent2} at $\theta=\pi$, for $\alpha=0.5, \beta=0.1, \mu=-1, h=10.2, s=4, \Omega=8.2$ and $\cc=0$, which leads to $\left(p^\pi_-,q^\pi_-\right)\tran=\left(1,0\right)\tran$. The red solid line marks the trajectory with initial condition $(p_0,q_0)=(7/4,0)$ (cf.~plot of solutions in b). (b) Plot of the profile for the solution highlighted in (a), where $p$ (solid blue line) and $q$ (dashed red line) are given by~\eqref{eq:explicitsol} for the parameter set as in (a).}
    \label{fig:Hamilton}
\end{figure}

\begin{Proposition}\label{prop:hamilton_general}
The dynamics of~\eqref{eq:coherent2} on the invariant blow-up chart $\theta=0$ in case $\Omega=\frac{\b^+}{\alpha}-\frac{s^2}{2}$ possesses the invariant line $\{q=-\frac{s}{2}\}$ and, after time-rescaling, for $q\neq -\frac{s}{2}$ the Hamiltonian 
$$H^0(p,q)=-\frac{p^2+q^2+\alpha s p +sq -h+\b^+/\alpha +\mu}{q+\frac{s}{2}}$$
along solutions of~\eqref{eq:complex ODE}. Analogously on the chart $\theta=\pi$, in case
\begin{equation}\label{eq:center case}
    \Omega=\frac{\b^-}{\alpha}+\frac{s^2}{2}
\end{equation}
possesses the invariant line $\{q=\frac{s}{2}\}$ and for $q\neq s/2$ the Hamiltonian

\begin{equation}\label{eq:hamiltonian fct}
H^\pi(p ,q)=\frac{p ^2+q^2-\alpha s p-sq+h-\b^-/\alpha+\mu}{q-\frac{s}{2}}.
\end{equation}

Moreover, each half plane $\{\theta=0, q\le -\frac{s}{2}\}$, $\{\theta=0, q\ge - \frac{s}{2} \}$ $\left(\{\theta=\pi, q\le \frac{s}{2}\}\right.$ and $\left.\{\theta=\pi, q\ge \frac{s}{2}\}\right)$ is filled with periodic orbits encircling the equilibria at $z_\pm^0$ $\left(z^\pi_\pm\right)$ if additionally $\Omega>h-\mu+\frac{s^2}{4}(\alpha^2-1)$ $\left( \Omega<h+\mu+\frac{s^2}{4}(1+\alpha^2) \right)$.
\end{Proposition}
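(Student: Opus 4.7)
I would prove the claim on the chart $\theta=0$ by direct computation and quote symmetry for $\theta=\pi$ (replace $\cos\theta=1$ by $-1$, $\b^+$ by $\b^-$, and $q+s/2$ by $q-s/2$, which flips signs consistently throughout). Restricting~\eqref{eq:coherent2} to $\theta=0$ gives a planar ODE in $(p,q)$; inserting $\Omega=\b^+/\alpha-s^2/2$ collapses the constant term of the $q$-equation to $-\alpha s^2/2$, after which a direct rearrangement yields the factorisation
\[
q' = -(q+s/2)(2p+\alpha s).
\]
This exhibits $\{q=-s/2\}$ as an invariant line and isolates the denominator of $H^0$.

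The core of the Hamiltonian claim is the algebraic identity
\[
p' + N(p,q) = 2(q+s/2)^2, \qquad N(p,q)\coloneqq p^2+q^2+\alpha sp+sq-h+\b^+/\alpha+\mu,
\]
which I would verify by substituting $h-\Omega = h-\b^+/\alpha+s^2/2$ into the formula for $p'$ and collecting terms. With $H^0 = -N/(q+s/2)$, chain rule gives $\partial_p H^0 = -(2p+\alpha s)/(q+s/2)$ and $\partial_q H^0 = -2 + N/(q+s/2)^2$, so that, using both the factorisation of $q'$ and the identity above,
\[
\frac{dH^0}{d\xi} = -\frac{2p+\alpha s}{q+s/2}\bigl(p'+N-2(q+s/2)^2\bigr) = 0.
\]
Away from the invariant line, the time rescaling $d\tau=(q+s/2)^2\,d\xi$ then produces the canonical form $dp/d\tau=-\partial_q H^0$, $dq/d\tau=\partial_p H^0$, establishing the Hamiltonian structure.

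For the periodic-orbit statement, the extra hypothesis $\Omega>h-\mu+s^2(\alpha^2-1)/4$ combined with $\alpha\Omega-\b^+=-\alpha s^2/2$ gives $-4C^0-B^2 = 4(\Omega-h+\mu)+(1-\alpha^2)s^2>0$, so $\gamma^0\in\R\setminus\{0\}$. By Remark~\ref{rem:equilibria} the two equilibria $z^0_\pm$ then share real part $-\alpha s/2$ and have imaginary parts $(-s\mp\gamma^0)/2$, placing one in each open half-plane cut by $\{q=-s/2\}$, and each is a critical point of $H^0$. To upgrade this to a global foliation I would restrict $N$ to the line $q=-s/2$, where it is a parabola in $p$ with minimum value $\Omega-h+\mu+s^2(1-\alpha^2)/4>0$ under the hypothesis; hence $N>0$ on the entire invariant line, and $H^0$ diverges to $-\infty$ on every portion of the boundary of the upper half-plane (both as $q\to (-s/2)^+$ and as $\lVert(p,q)\rVert\to\infty$). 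Consequently the unique critical point is a global maximum, its regular level sets are compact closed curves encircling it, and the half-plane is foliated by periodic orbits; the lower half-plane is handled symmetrically with $H^0\to+\infty$.

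The main obstacle is not the Hamiltonian identity itself, which is mechanical, but combining it with the boundary analysis to rule out unbounded level sets: the extra inequality enters precisely as the positivity of $N$ on the invariant line, and this in turn is what forces compactness of the level sets of $H^0$ and hence periodicity of the orbits.
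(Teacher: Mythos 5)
Your argument is correct. The Hamiltonian verification itself is the same direct check the paper performs: the paper's one-line computation $\partial_p H^\pi\,p'+\partial_q H^\pi\,q'=(q'p'-p'q')/(q-s/2)^2=0$ silently asserts that $\partial_p H$ and $\partial_q H$ are $\pm q'$, $\mp p'$ divided by the square of the denominator, which is exactly what your two identities $q'=-(q+s/2)(2p+\alpha s)$ and $p'+N=2(q+s/2)^2$ make explicit (I checked both, together with the collapse of the constant term to $-\alpha s^2/2$; they hold). Where you genuinely diverge from the paper is the periodic-orbit claim: the paper deduces it from Lemma~\ref{lem:chartconnections}, i.e.\ from the explicit tangent-type solution formula~\eqref{eq:explicitsol} for the planar flow, whereas you give a self-contained qualitative argument --- the hypothesis is equivalent to $4(\Omega-h+\mu)+(1-\alpha^2)s^2>0$, which is both $(\gamma^0)^2>0$ and the positivity of the minimum of $N$ on the invariant line, whence $H^0$ is coercive toward the boundary of each open half-plane, its superlevel (resp.\ sublevel) sets are compact, the unique interior critical point is the equilibrium $z^0_\pm$, and the compact regular level curves are periodic orbits which by index theory must encircle that equilibrium. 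Your route buys independence from the explicit solvability of the chart dynamics and would survive perturbations that preserve the first integral but destroy the closed-form solution; the paper's route is shorter given that Lemma~\ref{lem:chartconnections} is already in place and additionally records the period blow-up near the invariant line. One cosmetic remark: the proposition states the half-planes as closed sets, but the foliation by periodic orbits can of course only hold on the open half-planes (the invariant line itself is not periodic), and your proof correctly works with the open ones.
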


\begin{proof} For the sake of clarity, we will only present the computation for the blow-up chart $\theta=\pi$; the computation on $\theta=0$ can be done in the same manner. With respect to the parameters of equation~\eqref{eq:coherent2}, the condition $\Im(4C-B^2)=0$ is equivalent to $\Omega=\frac{\b^-}{\alpha}+\frac{s^2}{2}$ and in this case the system~\eqref{eq:coherent2} on $\{\theta=\pi\}$ is given by

\begin{equation}\label{eq:hamilton}
    \begin{array}{rl}
    p' &= p^2 - q^2 - \alpha s p + s q +h+\mu-\frac{\b^-}{\alpha}-\frac{s^2}{2} \\
    q' &= 2pq- s p- \alpha s q+\frac{\alpha s^2}{2}
    \end{array}
\end{equation}
We readily compute that for solutions of this
$$\frac{dH^\pi}{d\xi}=\frac{\partial H^\pi}{\partial p}p' + \frac{\partial H^\pi}{\partial q}q'=\frac{q'\cdot p'-p'\cdot q'}{(q-\frac{s}{2})^2} = 0\,,$$
which shows the canonical Hamiltonian structure of~\eqref{eq:hamilton} up to time rescaling. If additionally $\Omega<h+\mu+\frac{s^2}{4}(1+\alpha^2)$, it follows from Lemma~\ref{lem:chartconnections} that each half plane is filled with periodic orbits.
\end{proof}

Proposition~\ref{prop:hamilton_general} concerns the special case that $\cc\in(-1,1)$ and $\beta, \Omega$ are such that~\eqref{eq:center case} holds, which is henceforth referred to as the \textbf{center-case}. In particular, each orbit except the line $q\equiv s/2$ on the blow-up chart $\theta=\pi$ can by identified via the quantity~\eqref{eq:hamiltonian fct}, and each equilibrium $z^\pi_\pm$ has a neighborhood filled with periodic orbits if additionally $h>\frac{\b^-}{\alpha}-\mu+\frac{s^2}{4}(1-\alpha^2)$ (cf.\ Figure~\ref{fig:Hamilton}). Note the relation between the conditions~\eqref{eq:frequenzyzerospeedleft} and~\eqref{eq:frequenzyzerospeedright} and the conditions in Proposition~\ref{prop:hamilton_general} in case $s=0$.

\medskip
Based on Lemma~\ref{lem:chartconnections}, we also state the following uniqueness result.

\begin{Proposition}\label{prop:equilibriumconnection}
For $\Omega<\frac{\b^+}{\alpha}-\frac{s^2}{2}$ $\left[ \Omega>\frac{\b^+}{\alpha}-\frac{s^2}{2}\right]$, or $\Omega=\frac{\b^+}{\alpha}-\frac{s^2}{2}$ and $\Omega\le h-\mu+\frac{s^2}{4}(\alpha^2-1)$ there is a unique orbit with $(\theta,p,q)\tran(\xi)$ with $\theta(\xi)\to 0$ as $\xi\to-\infty$, and it holds that $(p+\rmi q)(\xi)\to z^0_-$ $\left[ (p+\rmi q)(\xi)\to z_+^0\right]$ as $\xi\to-\infty$ .
\end{Proposition}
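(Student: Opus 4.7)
First I would linearize~\eqref{eq:coherent2} at $Z_-^0=(0,p_-^0,q_-^0)$. Because $\theta'=\sin(\theta)p$ supplies a factor $\sin\theta$ that vanishes at $\theta=0$, the Jacobian at $Z_-^0$ is block triangular: the $\theta$-direction contributes the single eigenvalue $p_-^0$ and the $(p,q)$-block exactly reproduces the linearization of the scalar ODE $z'=-z^2+Bz+C^0$ at its root $z_-^0$ from Lemma~\ref{lem:explicit}, with eigenvalues $\Im(\gamma^0)\pm\rmi(s+2q_-^0)$. A short computation gives $\Im(-4C^0-B^2)=-4\alpha(\Omega-\b^+/\alpha+s^2/2)$, so the regime $\Omega<\b^+/\alpha-s^2/2$ forces $\Im(\gamma^0)>0$, and the $(p,q)$-block has strictly positive real part; hence $z_-^0$ is a repelling focus for the chart flow on $\{\theta=0\}$ and $z_+^0$ a contracting focus.

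\medskip
I would then invoke Lemma~\ref{lem:chartconnections}: in this regime every non-equilibrium orbit of the restricted chart system is a heteroclinic from $z_-^0$ to $z_+^0$. Consequently, for any orbit of~\eqref{eq:coherent2} with $\theta(\xi)\to 0$ as $\xi\to-\infty$, its $\alpha$-limit set is compact, connected, invariant, and contained in the closed invariant chart $\{\theta=0\}$. The chart classification restricts it to one of $\{z_-^0\}$, $\{z_+^0\}$, or the closure $\{z_-^0\}\cup H\cup\{z_+^0\}$ of a heteroclinic $H$. I would rule out the latter two via the contracting focus structure at $z_+^0$, whose $(p,q)$-eigenvalues $-\Im(\gamma^0)\pm\rmi(s+2q_+^0)$ have strictly negative real part: the local unstable set of $z_+^0$ in the full 3D flow is confined to the $\theta$-direction and nontrivial only if $p_+^0>0$, so any backward accumulation on $z_+^0$ would have to occur along this at most one-dimensional unstable manifold, which is incompatible with the forward drift along chart heteroclinics required for a closed invariant $\alpha$-limit including $z_+^0$. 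Hence the $\alpha$-limit equals $\{z_-^0\}$ and $(p+\rmi q)(\xi)\to z_-^0$.

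\medskip
Uniqueness of the orbit up to $\xi$-translation follows from the observation that the 2D unstable manifold of $z_-^0$ in the full system is pinned inside the invariant chart $\{\theta=0\}$ by uniqueness of invariant manifolds tangent to the $(p,q)$-eigenspace; the only off-chart invariant structure at $z_-^0$ producing the prescribed backward asymptote is therefore at most one-dimensional (the $\theta$-eigendirection) and hence parameterized by time shift. The degenerate sub-case $\Omega=\b^+/\alpha-s^2/2$ together with $\Omega\le h-\mu+s^2(\alpha^2-1)/4$ lies in the Hamiltonian center setting of Proposition~\ref{prop:hamilton_general}; hyperbolicity of $z_-^0$ in the $(p,q)$-directions is lost, but I would use the explicit representation~\eqref{eq:explicitsol} together with the sub-condition $\Re(-4C^0-B^2)\le 0$ to show the appropriate level set of $H^0$ still accumulates only on $z_-^0$. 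The main technical obstacle will be making the comparison of the full 3D flow with the chart flow near $z_+^0$ rigorous when $p_+^0>0$ (a tracking/shadowing argument near a partially hyperbolic equilibrium), and treating the Hamiltonian boundary case where the purely spectral arguments at $z_-^0$ degenerate and must be replaced by an analysis of the invariant $H^0$ along the connecting orbit.
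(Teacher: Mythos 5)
Your overall strategy --- confine the $\alpha$-limit set of a backward orbit to the invariant chart $\{\theta=0\}$, classify it via Lemma~\ref{lem:chartconnections}, and extract uniqueness from the dimension of an unstable manifold --- is the same as the paper's, but your stability bookkeeping at the two chart equilibria comes out backwards, and this breaks the argument. By your own computation, in the regime $\Omega<\b^+/\alpha-s^2/2$ the point $z^0_-$ is a \emph{repelling} focus of the chart flow (within-chart real part $\Im(\gamma^0)>0$), while its transverse eigenvalue is $p^0_-=\Re(z^0_-)=\tfrac12\left(-\alpha s-\Im(\gamma^0)\right)$, which is strictly negative for $s\ge0$. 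Hence $W^u(Z^0_-)$ is two-dimensional and, by uniqueness of invariant manifolds, coincides locally with the chart itself, so \emph{no} orbit with $\theta\not\equiv0$ can satisfy $(p+\rmi q)(\xi)\to z^0_-$ as $\xi\to-\infty$; equivalently, $(\log\theta)'=p$ shows that backward convergence $\theta\to0^+$ forces the average of $p$ near $-\infty$ to be nonnegative, which is incompatible with $p\to p^0_-<0$. (For $s<0$ with $p^0_->0$ the equilibrium would instead be a full three-dimensional source and uniqueness would fail.) The off-chart orbit in fact converges backward to the \emph{contracting} focus along its one-dimensional transverse unstable manifold, which exists precisely when that equilibrium has positive $p$-value; this is exactly the mechanism realized by the explicit solution~\eqref{eq:atan}, whose left asymptotic state $(\smu,0)$ is within-chart attracting and transversally repelling by~\eqref{eq:eigenvalues}. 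Your step ``ruling out'' the contracting focus because backward accumulation along its one-dimensional unstable manifold is ``incompatible with the forward drift along chart heteroclinics'' conflates the singleton $\alpha$-limit (which involves no heteroclinics and is the actual situation) with an $\alpha$-limit containing a whole heteroclinic closure (which is what chain-transitivity excludes). So you discard the correct limit and retain the impossible one; uniqueness then comes from the one-dimensionality of the transverse unstable manifold of the within-chart attractor, not from the ``pinned'' two-dimensional manifold at the repeller.

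Two further points. The boundary sub-case $\Omega=\b^+/\alpha-s^2/2$ with $\Omega\le h-\mu+\tfrac{s^2}{4}(\alpha^2-1)$ is \emph{not} the Hamiltonian center setting of Proposition~\ref{prop:hamilton_general}: it corresponds to $(\gamma^0)^2\le0$, i.e.\ $\gamma^0$ purely imaginary, so the within-chart multipliers $\pm\rmi\gamma^0$ are real and, for $\gamma^0\neq0$, the equilibria remain hyperbolic nodes with the chart still foliated by heteroclinics; the periodic-orbit regime is exactly the complementary inequality that the hypothesis excludes. Also, the compactness of the $\alpha$-limit set (boundedness of $(p,q)$ in backward time) is asserted but not argued. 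The paper's own proof is the transverse-eigenvalue argument in compressed form: Lemma~\ref{lem:chartconnections} pins the candidate limits, the sign of $\Re(z^0_\sigma)$ eliminates the equilibrium whose unstable manifold lies inside the chart, and the remaining equilibrium's one-dimensional transverse unstable manifold yields both the asymptote and uniqueness.
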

\begin{proof}
The conditions on $\Omega$ are equivalent to those in Lemma~\ref{lem:chartconnections}. If the statement were false, it nevertheless follows from Lemma~\ref{lem:chartconnections} that $(p+\rmi q)(\xi)\to z^0_-$ as $\xi\to-\infty$. However,  transverse to the blow-up chart, the equilibrium state $Z^0_-$ is stable for increasing $\xi$ and thus repelling for decreasing $\xi$. This contradicts the requirement $\theta(\xi)\to 0$ as $\xi\to-\infty$. Together with the fact that $Z^0_-$ has a one-dimensional unstable manifold uniqueness follows. Analogously in case $\Omega>\frac{\b^+}{\alpha}-\frac{s^2}{2}$.
\end{proof}

\begin{figure}[ht]
    \centering
    \begin{subfigure}[b]{0.25\textwidth}
        \centering
        \includegraphics[width=\textwidth]{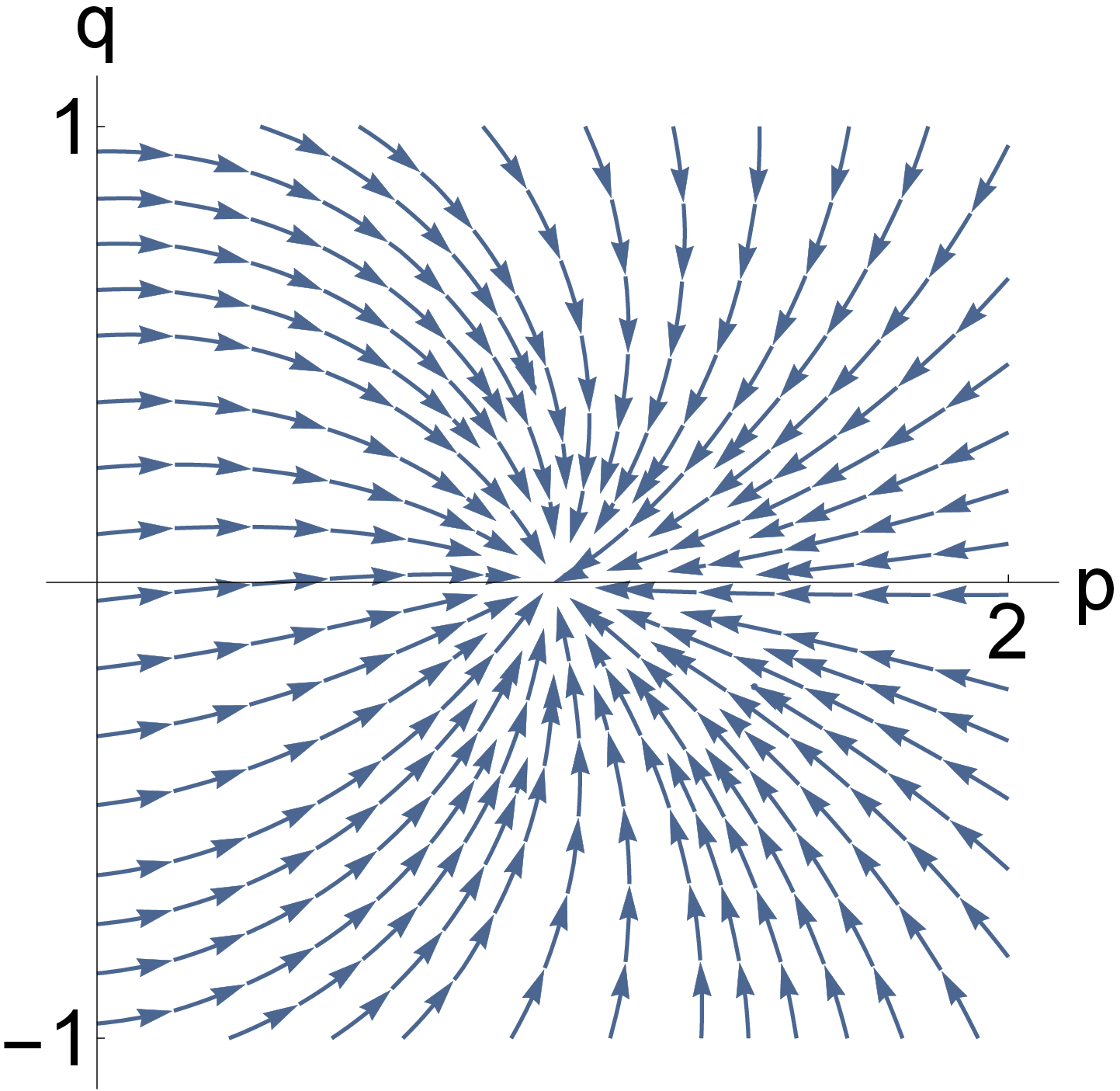}
        \caption{}
    \end{subfigure}
    \hfill
    \begin{subfigure}[b]{0.25\textwidth}
        \centering 
        \includegraphics[width=\textwidth]{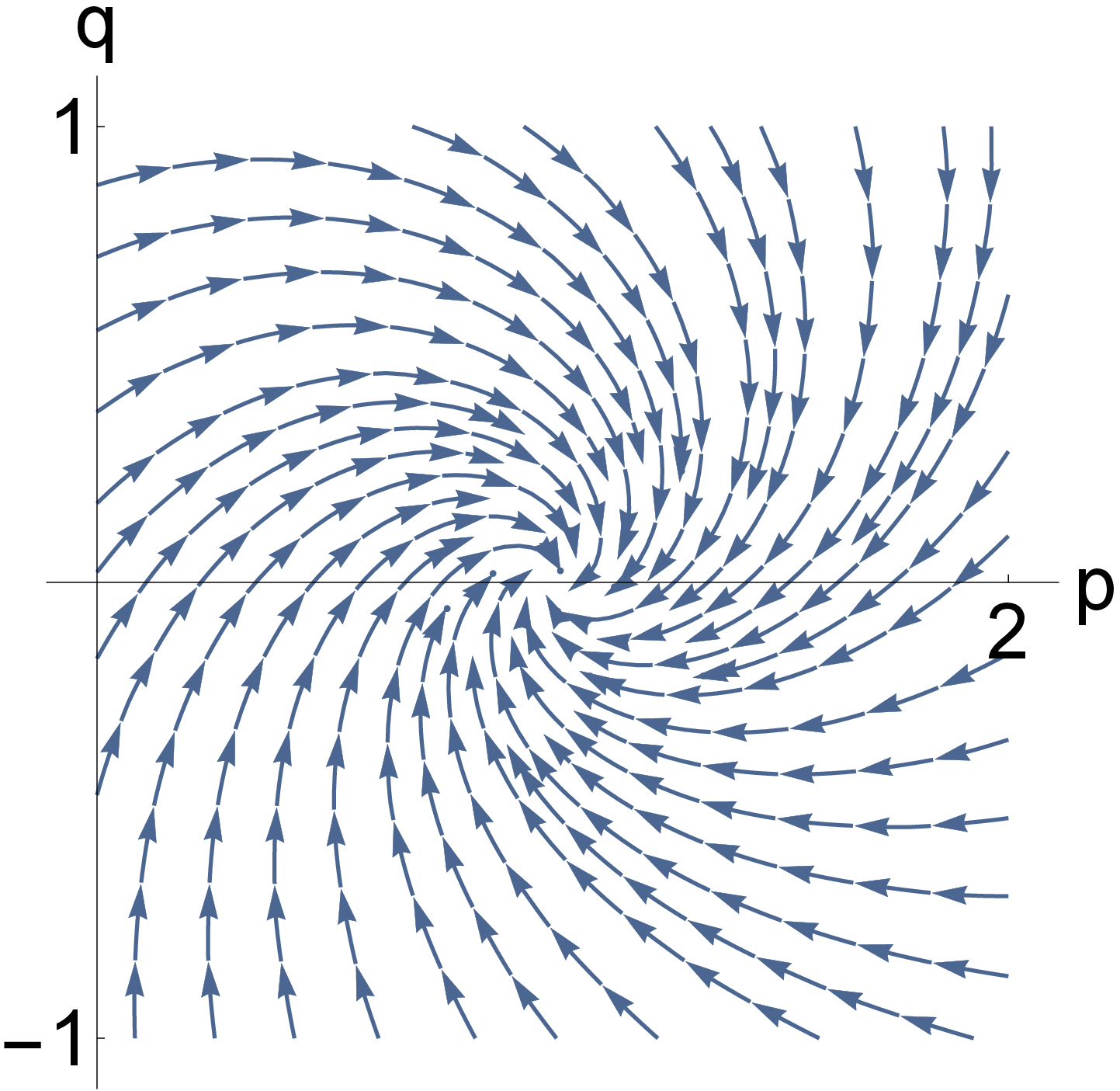}
        \caption{}
    \end{subfigure}
    \hfill
    \begin{subfigure}[b]{0.25\textwidth}
        \centering 
        \includegraphics[width=\textwidth]{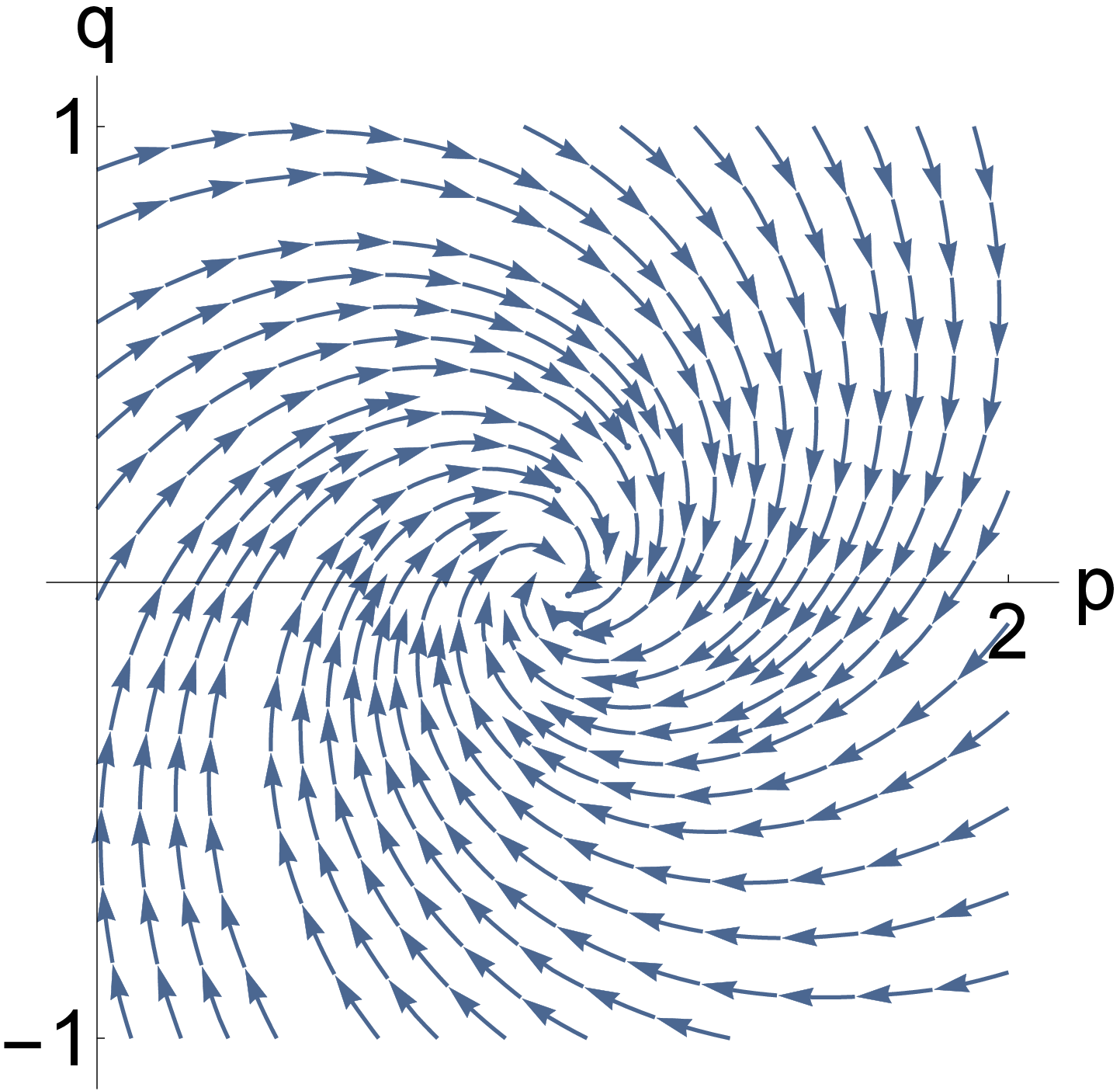}
        \caption{}
    \end{subfigure}

    \begin{subfigure}[b]{0.25\textwidth}
    \centering 
        \includegraphics[width=\textwidth]{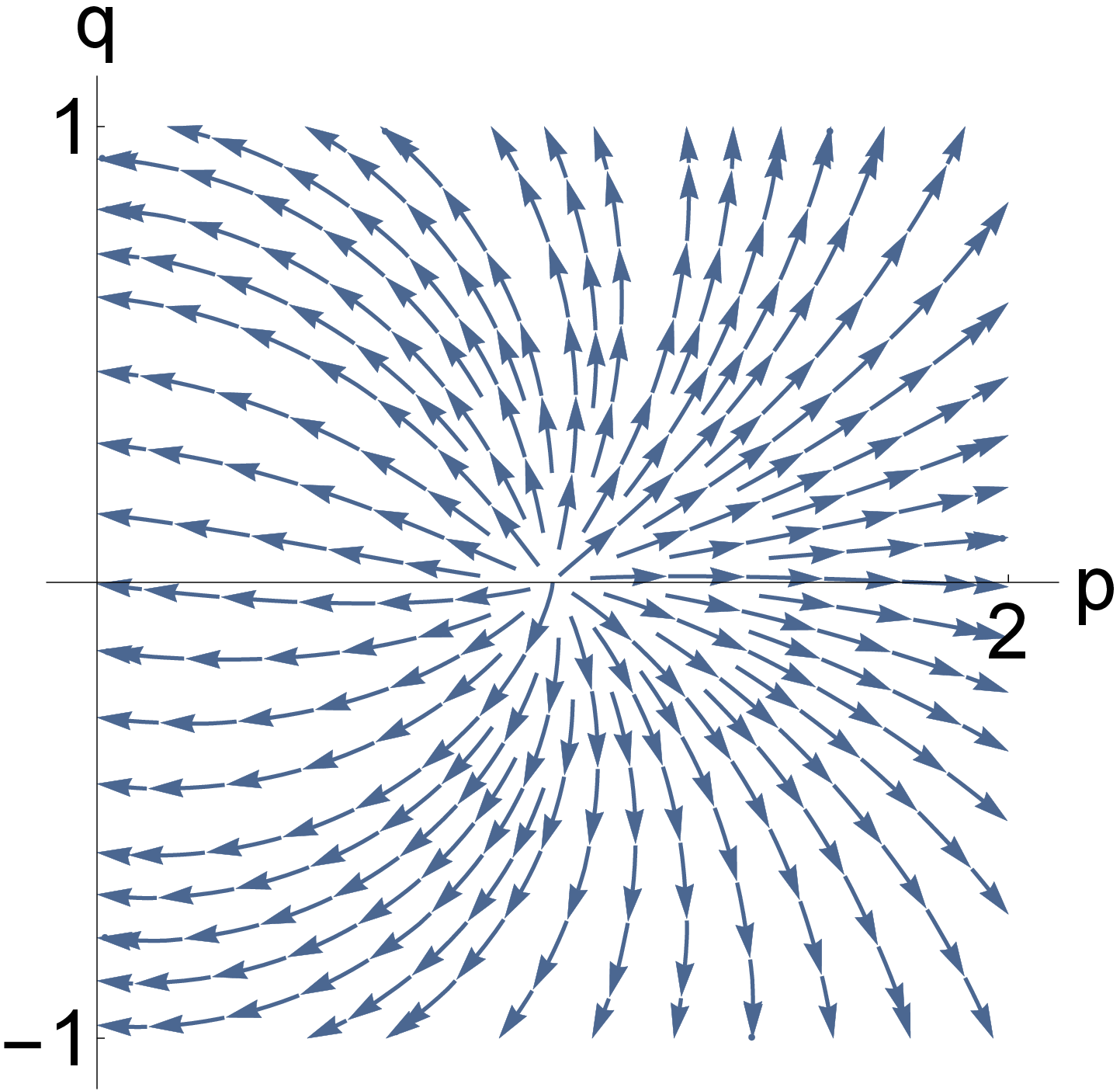}
        \caption{}
    \end{subfigure}
    \hfill
    \begin{subfigure}[b]{0.25\textwidth}
        \centering 
        \includegraphics[width=\textwidth]{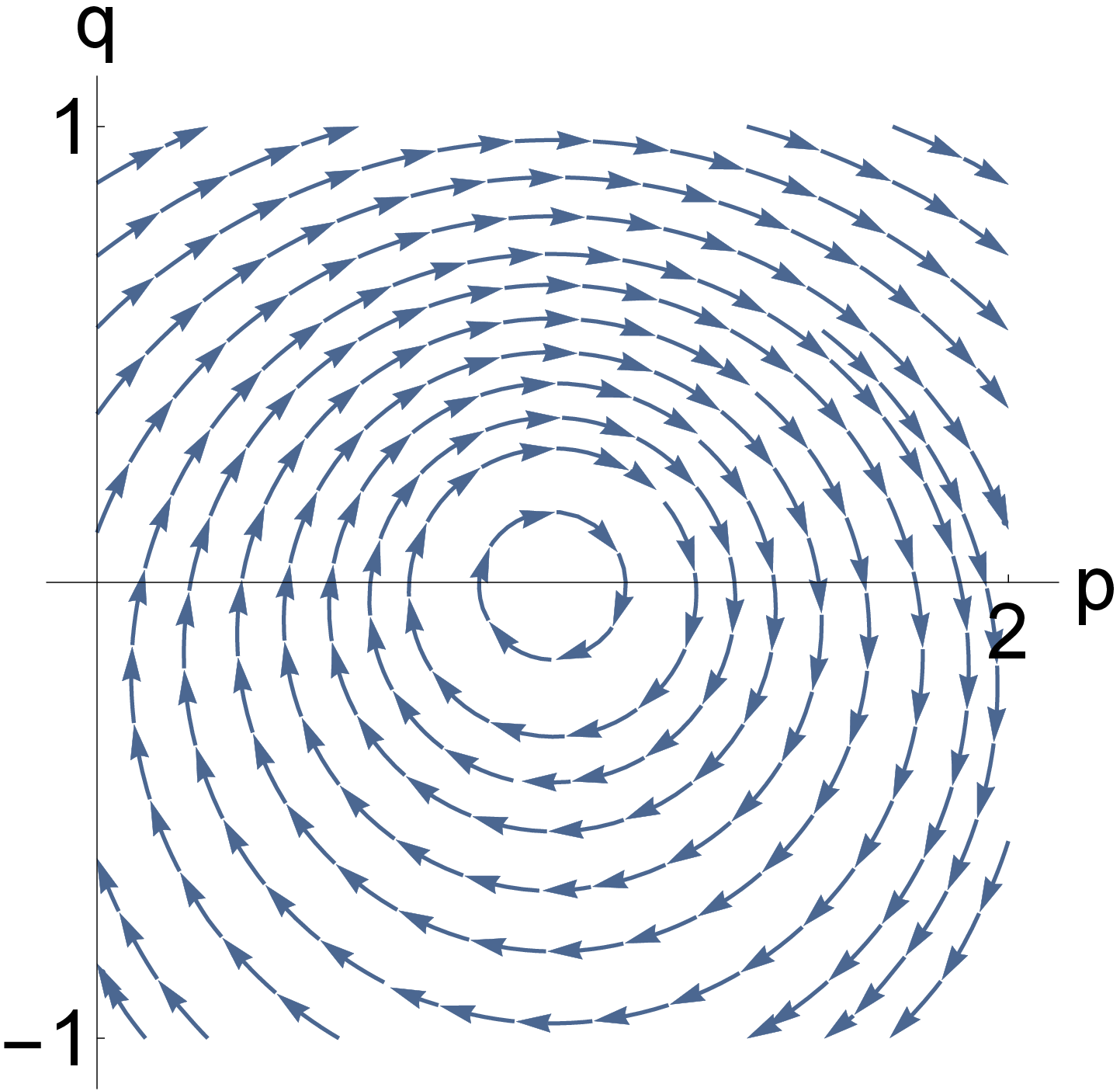}
        \caption{}
    \end{subfigure}
    \hfill
    \begin{subfigure}[b]{0.25\textwidth}
        \centering 
        \includegraphics[width=\textwidth]{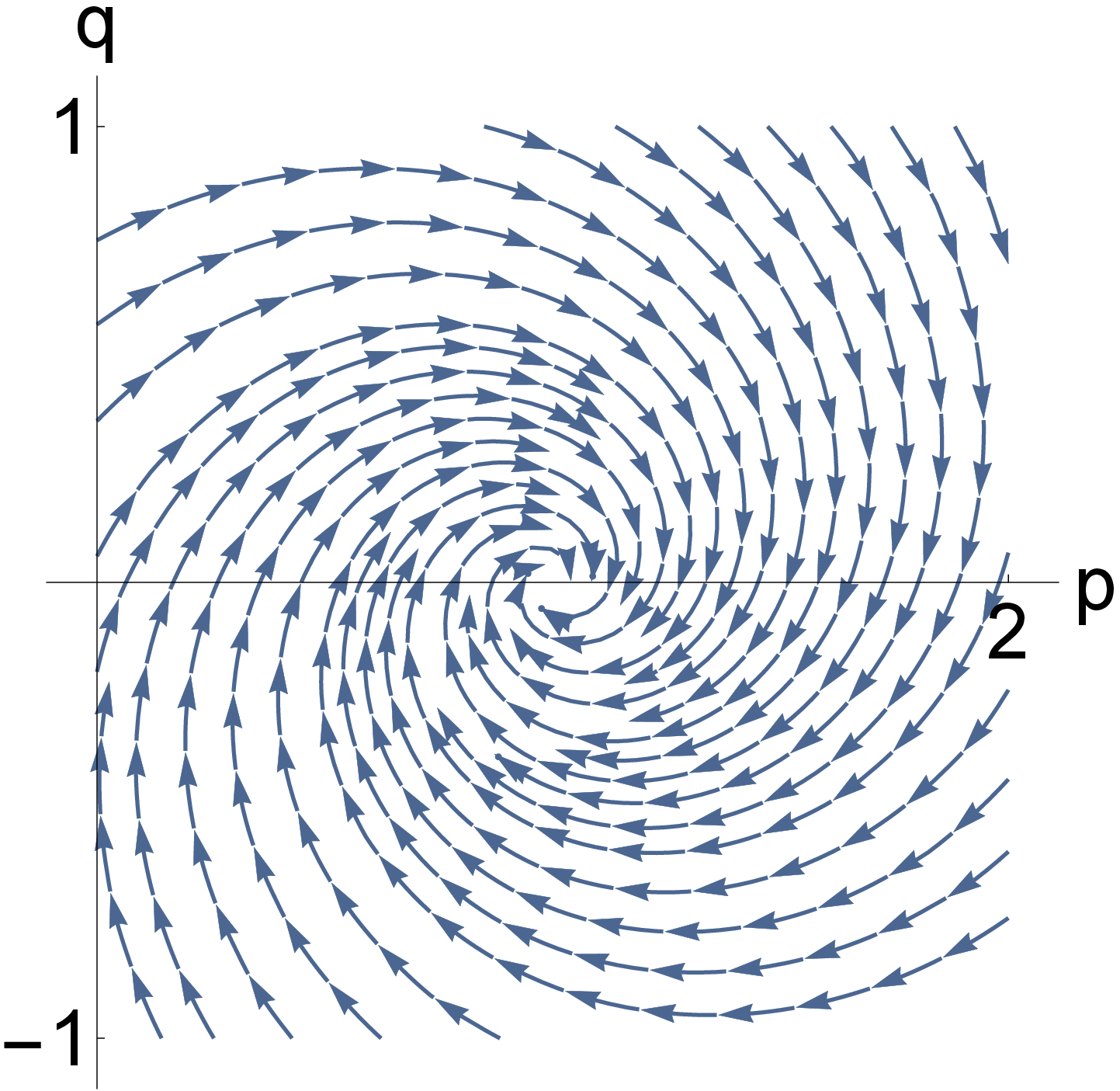}
        \caption{}
    \end{subfigure}
    \caption{\small Phase plane streamplots (with \textsc{Mathematica}) in blow-up charts  near the equilibrium $z_-^\pi=(1,0)$ for $\alpha=0.5\,,\beta=0.1\,,\mu=-1\,,\cc=0$, i.e., the second and third equation of~\eqref{eq:coherent2}. (a-c) $\theta=0$ and (d-f) $\theta=\pi$. (a,d) codim-2 regime, (b,e) center case, where $\Omega=\beta/\alpha+s^2/2$ holds on the chart $\theta=\pi$, and (c,f) codim-0 regime. The remaining parameters and equilibria in (a,d): $h=0.5$, $s_0=0.12$, $\Omega_0=0.44$, and $z_+^0=-1.06-0.12\rmi$, $z_+^\pi=-0.94+0.12\rmi$. In (b,e): $h=10.2$, $s_0=4$, $\Omega_0=8.2$, and $z_+^0=-3-4\rmi$, $z_+^\pi=1+4\rmi$. In (c,f): $h=50.0$, $s_0=19.92$, $\Omega_0=40.04$, and $z_+^0=-10.96-19.92\rmi$, $z_+^\pi=8.96+19.92\rmi$.}
    \label{fig:streamplot}
\end{figure}

Domain walls are heteroclinic orbits between the blow-up charts and decisive for their bifurcation structure are the dimensions (and directions) of un/stable manifolds of the equilibria on these charts. Hence, we next discuss the equilibria $Z^0_\pm$ and $Z^\pi_\pm$ and their stability.

Transverse to the blow-up charts in $\theta$-direction we readily compute the linearization $\partial_\theta(\sin(\theta)p)=\cos(\theta)p$, i.e., the transverse eigenvalue is $-A^{\theta} \cdot \Re(z^{\theta}_\pm)$ at $\theta=0$ and $\pi$, respectively. The eigenvalues within the blow-up charts are determined by $\pm \rmi \gamma$. With $\sigma=\pm$, respectively, the eigenvalues for $Z^0_\sigma$ are
\begin{equation}\label{eq:eig1}
\nu^0_{1,\sigma}=-\sigma\rmi \gamma^0 \,, \quad \nu^0_{2,\sigma}=\overline{\nu^0_{1,\sigma}}\,, \quad \nu^0_{3,\sigma}=\Re(z^0_\sigma)\end{equation} 
and for $Z^\pi_\sigma$  
\begin{equation}\label{eq:eig4}
\nu^\pi_{1,\sigma}=\sigma\rmi\gamma^\pi\,,\quad \nu^\pi_{2,\sigma}=\overline{\nu^\pi_{1,\sigma}}\,, \quad \nu^\pi_{3,\sigma}=-\Re (z^\pi_\sigma).
\end{equation}

Therefore, the signs of the real parts within each blow-up chart are opposite at $Z^{\pi}_+$ compared to $Z^{\pi}_-$ and determined by the sign of $\Re\left(\nu^{0,\pi}_{1,+}\right)$. Hence, within the blow-up charts each equilibrium is either two-dimensionally stable, unstable or a linearly neutral center point.

\medskip
For completeness, we next note that the equilibria on both blow-up charts can be neutral centers simultaneously (cf.\ Figure~\ref{fig:stability diagram}). However, this requires a negative spin polarization and a small Gilbert damping factor, and is not further studied in this paper.

\begin{Remark}\label{rem:cycle-to-cycle}
The equilibria of both blow-up charts are centers simultaneously, if and only if $\Im(\pm\gamma^{0,\pi})=0$ and $\gamma^{0,\pi}\neq0$ (compare Lemma~\ref{lem:chartconnections}). For example if $\alpha=0.5,\beta=0.1,\mu=-1,\cc=-0.99,h=10$
$$s^2=\frac{3960}{199}\,, \quad \Omega=\frac{\beta/\alpha}{1-\cc}+\frac{s^2}{2}=\frac{2000}{199}\,,$$
we obtain
$$\gamma^0=3.33551\,, \quad \gamma^\pi=3.27469\,.$$
\end{Remark}

\section{Domain Walls}\label{sec:domain walls}
All domain walls between $\pm\3$ that we are aware of are of coherent structure type, and thus in one-to-one correspondence to heteroclinic connections between the blow-up charts $\{\theta=0\}$ and $\{\theta=\pi\}$  in~\eqref{eq:coherent2}. Typically we expect these to be heteroclinics between equilibria within the charts, but this is not necessary. Based on the previous analysis, there are three options for heteroclinics between the charts: point-to-point, point-to-cycle, and cycle-to-cycle. We study the first two in this section, for which Proposition~\ref{prop:equilibriumconnection} implies uniqueness of the DW (up to translations/rotations) for a given set of parameters. The third case can occur at most in a relatively small set of parameters (see Remark~\ref{rem:cycle-to-cycle}). Its analysis is beyond the scope of this paper.

\medskip
Note that in case of an existing connection between an equilibrium and a periodic orbit (see Proposition~\ref{prop:hamilton_general}), the domain wall is automatically an inhomogeneous non-flat one. Moreover, via the singular coordinate change any such heteroclinic solution is heteroclinic between $\theta=0,\pi$ in \eqref{eq:coherent1} and through the spherical coordinates it is a heteroclinic connection between $\pm\3$ in the sphere, possibly with unbounded $\varphi$.

\subsection{Homogeneous Domain Walls}\label{sec:homogeneous DW}

It is known from~\cite{goussev2010domain} in case $\beta=0$ and from~\cite{Melcher2017} in case $\cc=0$ (and arbitrary $\beta$) that~\eqref{eq:coherent2} admits for $\mu<0$ a family of explicit homogeneous DWs $\m_0$ given by
\begin{equation}\label{eq:atan}
\begin{pmatrix}\theta_0\\p_0\\q_0\end{pmatrix}=\begin{pmatrix}2\arctan\left( \rme^{\sigma\sqrt{-\mu}\xi} \right)\\\sigma\sqrt{-\mu}\\0\end{pmatrix}
\end{equation}
and parameterized by $\Omega=\frac{h+\alpha\beta}{1+\alpha^2}$, $s^2 = -\frac{(\beta-\alpha h)^2}{\mu(1+\alpha^2)^2}>0$, and $\sigma=1$ for positive speed $s$ and $\sigma=-1$ for negative $s$; the family extends to $s=0$ in the limit $h\to \frac{\beta}{\alpha}$ with scaling of the frequency by $\Omega=\frac{\beta}{\alpha}+\frac{\smu}{\alpha}s$. For $s=0$ (standing) fronts with both orientations exist simultaneously  $\cc=0$ and are given by
\begin{equation*}
\begin{pmatrix}\theta_0\\p_0\\q_0\end{pmatrix}=\begin{pmatrix}2\arctan\left( \rme^{\pm\sqrt{-\mu}\xi} \right)\\\pm\sqrt{-\mu}\\0\end{pmatrix}.
\end{equation*}

Hence, the branches of left and right moving walls as parametrized by $s$ each have termination point at $s=0$ (cf.\ Figure~\ref{fig:existencecont}).

\medskip
The family of explicit DWs~\eqref{eq:atan} have domain wall width $\smu$, a profile independent of the applied field $h$ and propagate along a nanowire ($\mu<0$) with velocity $s$ while precessing with azimuthal velocity $\Omega$. Since these are unique up to spatial reflection symmetry, the direction of motion is related to the spatial direction of connecting $\pm\3$ through $\sigma$,
\begin{equation}\label{eq:moving wall}
\begin{aligned}
    \theta(-\infty)&=0 & \theta(+\infty)&=\pi & \Leftrightarrow s&>0 \textnormal{ (wall moves to the right)}\\ \theta(-\infty)&=\pi & \theta(+\infty)&=0 & \Leftrightarrow s&<0 \textnormal{ (wall moves to the left)}.
\end{aligned}
\end{equation}
To simplify some notation we will focus on the case of right-moving walls including standing walls ($s\ge0$) and thus make the standing assumptions that $h\ge\beta/\alpha$ as well as $\mu<0$. We therefore have a 1-to-1 relation of parameters $(\alpha,\beta,h,\mu)$ and right-moving DWs from 
$$\m(\xi,t) = \m_0(\xi,t;\alpha,\beta,h,\mu)$$
with speed and frequency given by 
\begin{equation}\label{eq:initialspeedfrequenzy}
s_0=s_0(\alpha,\beta,h,\mu):=\frac{\alpha h-\beta}{\sqrt{-\mu}(1+\alpha^2)}\;,\quad \Omega_0=\Omega_0(\alpha,\beta,h,\mu):=\frac{h+\alpha\beta}{1+\alpha^2}
\end{equation}
where the subindex $0$ emphasizes that $\cc=0$. Since $s_0$ is surjective on $\R_{\geq 0}$ any velocity can be realised. Spatial reflection covers the case $h\le\beta/\alpha$.

\medskip
Based on Lemma~\ref{lem:explicit} as well as Remark~\ref{rem:equilibria} for $\cc=0$ and (homogeneous) speed and frequency~\eqref{eq:initialspeedfrequenzy}, one readily finds the asymptotic states of~\eqref{eq:coherent2} given by

$$E^0\coloneqq Z^0_-\big\vert_{(s_0,\Omega_0)}=\left(0,\sqrt{-\mu},0\right)\tran \quad \textnormal{and} \quad E^\pi\coloneqq Z^\pi_-\big\vert_{(s_0,\Omega_0)}=\left(\pi,\sqrt{-\mu},0\right)\tran,$$
with (spatial) eigenvalues~\eqref{eq:eig1}, \eqref{eq:eig4} given by
\begin{equation}\label{eq:eigenvalues}
\begin{split}
\nu_{k,-}^0 &:= -\alpha s_0 - 2\smu -(-1)^k\, \rmi s_0 \,, \quad \nu^0_{3,-}= \smu\,, \\
\nu_{k,-}^\pi &:= -\alpha s_0 + 2\smu -(-1)^k\, \rmi s_0 \,, \quad \nu^\pi_{3,-}=-\smu\,,
\end{split}
\end{equation}
where $k=1,2$. Note that the above equilibria cannot be centers simultaneously (recall $\mu<0$), hence a cycle-to-cycle connection can not exist close to it (see Remark~\ref{rem:cycle-to-cycle} for details). For this reason, we focus on point-to-point as well as point-to-cycle connections.

\subsection{Inhomogeneous Domain Walls}\label{sec:inhomogeneous DW}
Homogeneous DWs exist only in case $\cc=0$~\cite[Theorem 5]{Melcher2017}, are explicitly given by~\eqref{eq:atan} and completely characterized by~\eqref{eq:initialspeedfrequenzy}. By~\cite[Theorem 6]{Melcher2017}, fast inhomogeneous DW solutions with $|s|\gg1$ exist for any $\cc \in(-1,1)$, but in contrast to \eqref{eq:atan}, the gradient of these profiles is of order $1/|s|$ and thus have a large `width'. The natural question arises what happens for any $s$ in case $\cc\neq 0$.

\medskip
This section contains the main results of this paper: the existence, parameter selection and structure of inhomogeneous DW solutions in case of small $|\cc|$ for any value of the applies field $h$, and thus any speed $s$. This will be achieved by perturbing away from the explicit solution $\m_0$ given by~\eqref{eq:atan}, where the bifurcation structure is largely determined by comparing the dimensions of the un/stable eigenspaces at the asymptotic equilibrium states, which are determined by~\eqref{eq:eigenvalues}. 

\medskip
Let $W^0_{s/u}$ and $W^\pi_{s/u}$ denote the stable and unstable manifolds associated to $E^0$ and $E^\pi$, respectively, and $w_{s/u}^0$ as well as $w_{s/u}^\pi$ be the dimension of these manifolds so that $w^0_s+w^0_u=w^\pi_s+w^\pi_u=3$. Notably $w^0_s=2$ and $w^0_u=1$ for all values of the parameters, and $w^\pi_s$ is either $1$ or $3$. Recall the standing assumption $s_0\ge0$. 

If $w^\pi_s=1$, the heteroclinic connection of $E^0$ and $E^\pi$ generically has \emph{codimen\-sion-2}, while for $w^\pi_s=3$ it has \emph{codimen\-sion-0}, and we refer to the transition point between these cases, following the discussion in \S~\ref{sec:blow-up}, as the \emph{center case}. From~\eqref{eq:eigenvalues} we have $w^\pi_s=1 \Leftrightarrow 0\le s_0<\frac{2\smu}{\alpha}$, $w^\pi_s=3 \Leftrightarrow s_0>\frac{2\smu}{\alpha}$ and the center case at $s_0=\frac{2\smu}{\alpha}$.

Hence, within the family of homogeneous DWs given by \eqref{eq:atan} and satisfying \eqref{eq:initialspeedfrequenzy}, the different bifurcation cases have  speed and frequency relations
\begin{align*}
    \textnormal{codim-0: } & s_0>\frac{2\smu}{\alpha} \quad \textnormal{and} \quad \Omega_0>\frac{\beta}{\alpha}-\frac{2\mu}{\alpha^2}\,,\\
    \textnormal{center: } & s_0=\frac{2\smu}{\alpha} \quad \textnormal{and} \quad  \Omega_0=\frac{\beta}{\alpha}-\frac{2\mu}{\alpha^2}\,,\\ 
    \textnormal{codim-2: } & 0 \le s_0<\frac{2\smu}{\alpha}\quad \textnormal{and} \quad \frac{\beta}{\alpha}\le\Omega_0<\frac{\beta}{\alpha}-\frac{2\mu}{\alpha^2}.
\end{align*}
Using~\eqref{eq:initialspeedfrequenzy} these can be written in terms of the parameters of~\eqref{LLGS}, which gives the characterization mentioned in the introduction \S 1.

\begin{Remark}\label{rem:stab}
The case distinction is also related to the spectral stability of the asymptotic states $\m=\pm\3$ in the dynamics of the full PDE~\eqref{LLGS} which is beyond the scope of this paper, but see Figure~\ref{fig:stability diagram} for an illustration. In short, it follows from, e.g.,~\cite[Lemma 1]{Melcher2017} that $\3$ is $L^2$-stable for $h>\beta/\alpha$, while $-\3$ is $L^2$-stable for $h<\beta/\alpha-\mu$ and unstable for $h>\beta/\alpha-\mu$. Based on this, the stability curves in Figure~\ref{fig:stability diagram} are defined as follows
$$\Gamma^+\coloneqq \frac{\beta/\alpha}{h-\mu}-1, \qquad \Gamma^- \coloneqq 1-\frac{\beta/\alpha}{h+\mu},$$
which intersect at
$$h=\frac{\beta}{2\alpha}+\sqrt{\frac{\beta^2}{4\alpha^2}+\mu^2}.$$
Since the destabilisation of $-\3$ if $\beta>0$ corresponds to a Hopf-instability of the $($purely essential$)$ spectrum, it is effectively invisible in the coherent structure ODE, which detects changes in the linearization at zero temporal eigenvalue only. Visible from the PDE stability viewpoint is a transition of absolute spectrum through the origin in the complex plane of temporal eigenmodes, cf.~\cite{sandstede2000absolute}. Now in the center case, the state $-\3$ is already $L^2$-unstable since $\alpha>0$ as well as $\mu<0$ and $h>\beta/\alpha$ implies
$$h=h^\ast=\frac{\beta}{\alpha}-\frac{2\mu}{\alpha^2}(1+\alpha^2)>\frac{\beta}{\alpha}-\mu$$
and therefore that $\Gamma^-$ never intersects the line $\cc\equiv 0$ at $h=h^\ast$.

\medskip Moreover, it was shown in~\cite{gou2011stability} that the family of explicit homogeneous DWs~\eqref{eq:explicitsol} is ($linearly$) stable for sufficiently small applied fields, actually for $h<-\mu/2$, in case $\beta=0$, hence in the bi-stable case where $\pm\3$ are $L^2$-stable. As mentioned before, $\beta=0$ is equivalent to $\cc=0$ in the~\ref{LLGS} equation with an additional shift in $h$ and $\beta$, which leads to the~\eqref{LLG} case. We expect these DWs are also stable for small perturbations in $\cc$, due to the properties of the operator established in~\cite{gou2011stability}, but further analysis also on the transition from convective/transient to absolute instability will be done elsewhere.
\end{Remark}

With these preparations, we next state the main results, which concern existence of DWs in the three regimes.

\begin{Theorem}\label{theo:codim0}
For any parameter set $(\alpha_0, \beta_0, h_0, \mu_0)$ in the codim-0 case, i.e., $\mu_0<0$ and $h_0>\beta_0/\alpha_0-2\mu_0-2\mu_0/\alpha_0^2$, the following holds. The explicit homogeneous DWs $\m_0$ in~\eqref{eq:explicitsol} lies in a smooth family $\m_{\cc}$ of DWs parameterized by $(\cc, \alpha, \beta, h, \mu, s, \Omega)$ near $(0, \alpha_0, \beta_0, h_0, \mu_0, s_0, \Omega_0)$ with $s_0$, $\Omega_0$ from~\eqref{eq:initialspeedfrequenzy} evaluated at $(\alpha_0, \beta_0, h_0, \mu_0)$. Moreover, in case $\cc=0$ and $(s, \Omega)\neq (s_0, \Omega_0)$ evaluated at $(\alpha, \beta, h, \mu)$, or $\cc\neq 0$, these are inhomogeneous flat DWs.
\end{Theorem}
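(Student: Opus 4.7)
The strategy rests on the observation that in the codim-0 regime the heteroclinic underlying the explicit DW~\eqref{eq:atan} is a transverse saddle-to-sink connection and is therefore robust under parameter perturbation. Using~\eqref{eq:eigenvalues} together with the codim-0 condition $s_0 > 2\smu/\alpha$ (equivalent to $h_0 > h^\ast$), I would first record that $E^\pi$ is a sink: its three eigenvalues $-\smu$ and $-\alpha s_0 + 2\smu \pm \rmi s_0$ all have strictly negative real part. Meanwhile $E^0$ is a saddle with one-dimensional unstable manifold tangent to the $\theta$-direction (eigenvalue $\smu > 0$) and a two-dimensional strongly stable manifold tangent to the blow-up chart $\{\theta = 0\}$. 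In particular, both equilibria are hyperbolic, $W^s(E^\pi)$ contains an open ball $B_{r_0}(E^\pi)$ for some $r_0 > 0$, and the explicit solution~\eqref{eq:atan} ensures that the branch of $W^u(E^0)$ entering $\{\theta \in (0,\pi)\}$ enters this ball.

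Next I would invoke standard smooth-dependence results. The implicit function theorem applied to the zero set of the right-hand side of~\eqref{eq:coherent2} at $E^0$ and at $E^\pi$ yields smooth families of hyperbolic equilibria $E^0_\cc$ and $E^\pi_\cc$ defined on a neighborhood $U \subset \R^7$ of the reference parameter vector $(0, \alpha_0, \beta_0, h_0, \mu_0, s_0, \Omega_0)$, with eigenvalue signatures preserved after shrinking $U$ by continuity. The smooth-parameter version of the (un)stable manifold theorem then produces a $C^k$-smooth family of local one-dimensional unstable manifolds $W^u_{\textnormal{loc}}(E^0_\cc)$ together with a uniform radius $r \in (0, r_0)$ such that $B_r(E^\pi_\cc) \subset W^s(E^\pi_\cc)$ for every parameter vector in $U$. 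Continuous dependence of trajectories on initial data and parameters over the compact $\xi$-interval traversed by the unperturbed wall from a small neighborhood of $E^0$ into $B_r(E^\pi)$ then implies that the perturbed branch of $W^u(E^0_\cc)$ also enters $B_r(E^\pi_\cc)$, whereafter it is trapped in the basin of the sink. After blow-down and inversion of the coherent structure ansatz~\eqref{eq:ansatz}, this furnishes the claimed smooth 7-parameter family $\m_\cc$.

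For the classification, flatness is immediate from Definition~\ref{def:flat}, since each $\m_\cc$ is a point-to-point heteroclinic in~\eqref{eq:coherent2} and thus $q(\xi) \to q^0_-(\cc)$ and $q(\xi) \to q^\pi_-(\cc)$ as $\xi \to \mp\infty$, both finite. For inhomogeneity in the sense of Definition~\ref{def:inhom} I would split into two cases: when $\cc \neq 0$, the nonexistence of homogeneous DWs is precisely~\cite[Theorem~5]{Melcher2017}; when $\cc = 0$, the explicit family~\eqref{eq:atan} with parameters rigidly selected by~\eqref{eq:initialspeedfrequenzy} exhausts all homogeneous DWs, so any perturbed member with $(s, \Omega) \neq (s_0, \Omega_0)$ evaluated at $(\alpha, \beta, h, \mu)$ must satisfy $q \not\equiv 0$.

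Because the codim-0 case reduces to persistence of a saddle-to-sink connection with an open target, no shooting-type matching condition arises and the full 7-parameter family is obtained without loss of dimensions. The only point requiring care is the simultaneous uniformity of the local invariant manifolds and the basin size across all seven parameters, which is standard; I expect this to be the only real technical step, substantially milder than the matching arguments needed for the codim-2 and center cases in Theorems~2 and~3.
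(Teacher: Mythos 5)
Your proposal is correct and follows essentially the same route as the paper: both arguments reduce the codim-0 case to the robustness of a saddle-to-sink connection (the paper phrases this as transverse intersection of the one-dimensional $W^0_u$ with the three-dimensional $W^\pi_s$ persisting by the implicit function theorem, which is equivalent to your basin-trapping formulation), and both settle inhomogeneity via~\cite[Theorem~5]{Melcher2017}. Your explicit handling of flatness and of the uniformity of the local manifolds and basin radius only spells out details the paper leaves implicit.
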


\begin{proof}
As mentioned, in the codim-0 case we have $w_s^\pi=3$ and for all parameters $w_u^0=1$. Due to the existence of the heteroclinic orbit~\eqref{eq:atan}, this means $W^0_u$ intersects $W^\pi_s$ transversely and non-trivially for $\cc=0$ in a unique trajectory. Therefore, this DW perturbs to a locally unique family by the implicit function theorem for perturbations of the parameters in~\eqref{eq:coherent2}. 

For $\cc\neq 0$ sufficiently small these are inhomogeneous DWs since the derivative of the third equation, the $q$-equation, in~\eqref{eq:coherent2} with respect to $\cc$ is nonzero in this case; hence already the equilibrium states move into the inhomogeneous regime. 

For $\cc=0$ but $(s, \Omega)\neq (s_0, \Omega_0)$ at $(\alpha, \beta, h, \mu)$, it follows from~\cite[Theorem 5]{Melcher2017} that these DWs cannot be homogeneous.
\end{proof}

Next we consider the center case, where $h=h^\ast=\beta/\alpha-2\mu-2\mu/\alpha^2$. We start with a result that follows from the same approach used in the codim-2 case and give refined results below.

\begin{Corollary}\label{cor:transition}
The statement of Theorem~\ref{theo:codim0} also holds for a parameter set in the center case if the perturbed parameters $(\cc, \alpha, \beta, h, \mu, s, \Omega)$ satisfy $\Omega> s^2/2+\b^-/\alpha$. If $\Omega=s^2/2+\b^-/\alpha$ and $\Omega<h+\mu+\frac{s^2}{4}(1+\alpha^2)$ the same holds except the DW is possibly non-flat.
\end{Corollary}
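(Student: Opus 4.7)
My plan is to follow the template of the proof of Theorem~\ref{theo:codim0} and perturb from the explicit homogeneous DW $\m_0$ at a base parameter set with $h_0=h^\ast$. In the center case, $\m_0$ still satisfies $(p_0,q_0)\equiv(\sqrt{-\mu},0)$ and connects $E^0=Z^0_-$, which has $w^0_u=1$, to $E^\pi=Z^\pi_-$, whose linearization now has a single stable transverse eigenvalue $-\sqrt{-\mu}$ and a pair of purely imaginary in-chart eigenvalues $\pm i s_0$, so that locally near $E^\pi$ the chart $\theta=\pi$ plays the role of a two-dimensional center manifold into which $\m_0$ enters strongly along the $\theta$-direction.

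For the first case $\Omega>s^2/2+\b^-/\alpha$, the strategy is to replicate the transverse-intersection argument of Theorem~\ref{theo:codim0}, with $W^\pi_s$ replaced by the smooth center-stable manifold $W^\pi_{cs}$ of $Z^\pi_-$ together with its strong stable fibration. At the base parameter set, $\m_0$ meets $W^\pi_{cs}$ transversely along the strong stable leaf, and the stated open condition keeps the asymptotic quantities ($\gamma^\pi$, the equilibria $Z^\pi_\pm$, and the chart dynamics) smooth in the parameters with the sign structure required for the associated shooting map to remain a local diffeomorphism. The implicit function theorem then yields the smooth 7-parameter family $\m_\cc$, and inhomogeneity for $\cc\neq 0$ follows because the perturbed equilibria satisfy $q^\pi_-\neq 0$, exactly as in Theorem~\ref{theo:codim0}.

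For the second case $\Omega=s^2/2+\b^-/\alpha$ together with $\Omega<h+\mu+s^2(1+\alpha^2)/4$, I would invoke Proposition~\ref{prop:hamilton_general}: on the chart $\theta=\pi$ the Hamiltonian $H^\pi$ is conserved and $Z^\pi_-$ is a center surrounded by periodic orbits. The one-dimensional $W^0_u$ still enters a neighborhood of the chart, but its limit is either $Z^\pi_-$ itself (flat DW) or a periodic orbit on some level set $H^\pi=c$ (non-flat DW, i.e.\ a heteroclinic from an equilibrium to a cycle, as announced in \S\ref{sec:domain walls}). The persistence argument is the same shooting construction, now parameterized in addition by the level $c$, and the ``possibly non-flat'' qualifier simply records that $c$ may move continuously with the perturbation.

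The main obstacle is the loss of hyperbolicity of $E^\pi$ at the base center case, which rules out a direct application of classical stable-manifold persistence. In Case 1 the delicate step is to jointly certify that the strong stable fibration of $W^\pi_{cs}$ and its transverse intersection with $W^0_u$ survive the full 7-parameter perturbation; the quantitative in-chart solutions from Lemma~\ref{lem:explicit} (through the explicit formulas for $\gamma^\pi$ and $z^\pi_\pm$) are what make this controllable. In Case 2, the extra difficulty is to track the asymptotic state as the parameters move along the Hamiltonian line and to identify from $H^\pi$ whether the limit of $W^0_u$ hits the equilibrium or a surrounding periodic orbit.
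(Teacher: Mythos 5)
Your proposal follows essentially the same route as the paper's proof: for $\Omega>s^2/2+\b^-/\alpha$ the perturbed in-chart eigenvalues at $Z^\pi_-$ acquire negative real part, so the (center-)stable manifold at the transition point perturbs to a genuine three-dimensional stable manifold and the codim-0 transversality/implicit-function argument applies; for $\Omega=s^2/2+\b^-/\alpha$ with $\Omega<h+\mu+\frac{s^2}{4}(1+\alpha^2)$ one targets the stable set of a neighborhood of $Z^\pi_-$ filled with periodic orbits via Proposition~\ref{prop:hamilton_general}, so the limit is either the equilibrium or a cycle. This matches the paper's argument in both structure and key ingredients.
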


\begin{proof}
It follows from Proposition~\ref{prop:hamilton_general} and the discussion before that $\Omega> s^2/2+\b^-/\alpha$ for the parameter perturbation implies that the eigenvalues of the perturbed equilibrium $Z^\pi_-\approx E^\pi$ satisfy $\Re(\nu_{k,-}^\pi)< 0$, $k=1,2$. Hence, the stable manifold at the target equilibrium is two-dimensional and lies in a smooth family with the center-stable manifold at the transition point. Then the proof is the same as in the codim-0 case. If the perturbation has $\Omega=s^2/2+\b^-/\alpha$ then we consider as target manifold the three-dimensional stable manifold of a neighborhood of $Z^\pi_-$ within the blow-up chart $\theta=0$. This neighborhood consists of periodic orbits by Proposition~\ref{prop:hamilton_general} if $\Omega<h+\mu+\frac{s^2}{4}(1+\alpha^2)$. By dimensionality the intersection with the unstable manifold of $Z^0_-$ persists and yields a heteroclinic orbit from the perturbed equilibrium at $\theta=0$ to the blow-up chart at $\theta=\pi$. Perturbing $\cc$ away from zero moves the left-asymptotic state into the inhomogeneous regime and thus generates an inhomogeneous DWs. Note from Proposition~\ref{prop:hamilton_general} that the right-asymptotic state is either an equilibrium with $q\neq 0$ or a periodic orbit along which $q=0$ happens at most at two points.
\end{proof}

Next we present a refined result in which we show that typical perturbations indeed give non-flat DWs, i.e., heteroclinic connections with right-asymptotic state being a periodic orbit. The existence of flat DWs for $\cc\neq 0$ is severely constrained, but not ruled out by this result. Our numerical results, such as those presented in \S\ref{sec:continuation}, always lead to a selected solution with a periodic asymptotic state.

In addition, attempts to perform numerical continuation (see \S\ref{sec:continuation}) of flat DWs to $\cc\neq 0$ failed. Here we added the constraint $\tH=0$ and allowed adjustment the parameters $h$ and $s$, but the continuation process did not converge, which confirms numerically the generic selection of a periodic orbit. 

\medskip
As mentioned before, the right asymptotic state is $\3$ in either case in the PDE coordinates; the difference between flat and non-flat lies in the finer details of how the profile approaches $\3$ in term of $p$ and also $q$, which relates to $\m$ through~\eqref{eq:q}.

\begin{Theorem}\label{theo:center}
Consider the smooth family of DWs from Corollary~\ref{cor:transition} with $\cc=0$ for parameters satisfying~\eqref{eq:center case} with fixed $\alpha>0$, $\beta\ge0$, and $\mu<0$. Then there is a neighborhood $(\cc, s,h)$ of $(0,s_0,h^*)$ such that the following holds. Flat DWs occur at most on a surface in the $(\cc,s, h)$-parameter space and, for $\beta\neq 0$, satisfy $|h-h_0|^2+|s-s_0|^2=\calO(|\cc|^3)$, more precisely~\eqref{eq:second order approx} holds, where $h_0=h^\ast$ and $s_0=2\smu/\alpha$. Otherwise DWs are non-flat, in particular all DWs not equal to $\m_0$ for $\cc=0$ or $\beta=0$ are non-flat.
\end{Theorem}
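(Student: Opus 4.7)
My plan is to construct a Melnikov-type function $\tH(\cc,s,h)$ whose zero set coincides with the parameters giving flat DWs and to analyse its Taylor expansion at the base point $(0,s_0,h^*)$. The crucial structural observation is that the planar dynamics on the invariant chart $\theta=\pi$ remains Hamiltonian throughout the $\cc$-perturbed center-case family, because substituting $\cos\theta=-1$ into~\eqref{eq:coherent2} simply replaces $\beta/(1+\cc\cos\theta)$ by $\b^-=\beta/(1-\cc)$, so Proposition~\ref{prop:hamilton_general} continues to apply with the Hamiltonian $H^\pi$ of~\eqref{eq:hamiltonian fct}. I therefore define
\[
\tH(\cc,s,h):=H^\pi\bigl(p_\infty,q_\infty\bigr)-H^\pi\bigl(Z^\pi_-\bigr),
\]
where $(p_\infty,q_\infty):=\lim_{\xi\to+\infty}(p,q)(\xi)$ along the unique perturbed heteroclinic provided by Corollary~\ref{cor:transition} and $Z^\pi_-$ is the perturbed equilibrium on $\{\theta=\pi\}$. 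Then $\tH=0$ holds exactly when the right-asymptotic orbit is the perturbed equilibrium itself (flat DW); otherwise the orbit accumulates on a non-trivial periodic level set of $H^\pi$ and the DW is non-flat by Proposition~\ref{prop:hamilton_general}. Smoothness of the heteroclinic in parameters and of $H^\pi$ makes $\tH$ smooth near $(0,s_0,h^*)$ and vanishing there.

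Next I would Taylor expand $\tH$ at $(0,s_0,h^*)$. For the $(s,h)$-dependence at $\cc=0$, the explicit family~\eqref{eq:atan} with parameters~\eqref{eq:initialspeedfrequenzy} yields a whole curve $\{(s_0(h),h)\}$ of homogeneous (hence flat) DWs through $(s_0,h^*)$, so $\tH(0,s,h)$ vanishes along this curve and its tangential derivative is automatically zero. A direct computation using the explicit solutions~\eqref{eq:explicitsol} shows that $\partial_q H^\pi$ evaluated at $(p_0,q_0)\equiv(\smu,0)$ vanishes at base parameters, which forces the remaining normal $(s,h)$-derivative to vanish as well, leaving a quadratic form $Q(s-s_0,h-h^*)$ as the leading $(s,h)$-contribution to $\tH$, which I expect to be (generically) nondegenerate.

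For the $\cc$-dependence I would exploit the equivalence from the introduction: when $\beta=0$, system~\eqref{LLGS} reduces to~\eqref{LLG} up to a parameter shift, so $\m_0$ persists unchanged for every $\cc\in(-1,1)$ and $\tH$ vanishes. Hence the part of $\tH$ that is independent of $(s-s_0,h-h^*)$ carries an overall factor $\beta$. Substituting the geometric expansion $\beta/(1+\cc\cos\theta)=\beta\sum_{k\ge 0}(-\cc\cos\theta)^k$ into~\eqref{eq:coherent2} produces a hierarchy of Melnikov integrals along the symmetric $\arctan$-profile $\m_0$; the orders $\cc^1$ and $\cc^2$ vanish, using again the identity $\partial_q H^\pi=0$ along $\m_0$ together with the parity $\theta_0(-\xi)=\pi-\theta_0(\xi)$, so that $\cos\theta_0$ is odd in $\xi$ while $(p_0,q_0)$ is even. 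The next order gives a non-zero $\cc^3$ coefficient proportional to $\beta$, so that
\[
\tH(\cc,s,h)=Q(s-s_0,h-h^*)+c\,\beta\,\cc^3+\text{higher order},
\]
with $c\ne 0$ and $Q$ definite. Then $\tH=0$ forces $|s-s_0|^2+|h-h^*|^2=\calO(|\cc|^3)$, and matching the coefficients of $Q$ and the cubic term explicitly yields~\eqref{eq:second order approx}.

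The main technical obstacle is executing the Melnikov calculus rigorously to third order and verifying the two cancellations at orders $\cc^1$ and $\cc^2$: this calls for a Lin's-method setup in which the perturbed heteroclinic is obtained as a solution of a jump equation on a cross-section through $\m_0$, with the jump expanded against the adjoint linearisation around $\m_0$. A subsidiary difficulty is the weak hyperbolicity of $Z^\pi_-$ inside the chart (purely imaginary eigenvalues at base parameters), which I would handle by computing $(p_\infty,q_\infty)$ through integration of the Hamiltonian flow on the chart starting from a cross-section transverse to the strong stable fibre of $Z^\pi_-$.
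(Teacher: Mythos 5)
Your skeleton coincides with the paper's: Appendix~\ref{app:center case proof} also works with $\tH(u;\eta)=H(u;\eta)-H(Z^\pi;\eta)$, characterizes flat DWs by $\tH\to 0$ (connection to the perturbed equilibrium rather than to a periodic level set of $H^\pi$), and uses that equilibria are critical points of $H^\pi$ to kill the first derivatives. However, your argument for the structure of the expansion contains a genuine error. You claim that $\tH(0,s,h)$ vanishes along the whole curve $\{(s_0(h),h)\}$ of homogeneous DWs and use this to dispose of the first-order $(s,h)$-derivatives. But those homogeneous DWs require $\Omega=\Omega_0(h)=(h+\alpha\beta)/(1+\alpha^2)$, whereas in the setting of the theorem $\Omega$ is slaved to $\b^-/\alpha+s^2/2$ by~\eqref{eq:center case}; the two constraints are compatible only at the isolated point $(s_0,h^\ast)$, so there is no such curve of flat DWs in the $(s,h)$-plane at $\cc=0$. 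Worse, if your claim were true, the quadratic form $Q$ would be forced to degenerate along the tangent of that curve, contradicting the definiteness you then assert and need: the whole point of~\eqref{eq:second order approx} is that the second-order expansion is a \emph{negative definite} binary quadratic form in $(s-s_0,h-h_0)$, which is exactly what yields $|s-s_0|^2+|h-h_0|^2=\calO(|\cc|^3)$.

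The decisive work that your proposal defers is where the theorem is actually decided: one must compute $\lim_{\xi\to\infty}u_\eta(\xi)\eta$ via variation of constants with the explicit fundamental matrix~\eqref{eq:fundMatrix} and evaluate the resulting integrals $I_C,I_S,J_C,J_S$ by contour integration; neither the definiteness of $Q$ nor the $\cc$-independence of the second-order terms can be read off from the parity of $\cos\theta_0$ along the symmetric profile. In the paper the $\cc^2$, $\cc(s-s_0)$ and $\cc(h-h_0)$ coefficients are individually nonzero both in the expansion along the perturbed orbit and in the expansion of $H$ at the perturbed equilibrium $Z^\pi(\eta)$, and they cancel only in the difference $\tH$ -- a different mechanism from the one you propose. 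Finally, your assertion that the pure-$\cc$ contribution is $c\,\beta\,\cc^3$ with $c\neq 0$ is unsupported and sits uneasily with the paper's numerics (Figure~\ref{fig:energy error}a shows $\tH$ attaining a maximum at $\cc=0$, suggesting at least quartic dependence); fortunately the theorem does not require it -- only the $\cc$-independence of the second-order expansion together with the definiteness of $Q$ is needed, and a correct proof must establish both by the explicit computation.
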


Due to its more technical nature, the proof of this theorem is deferred to Appendix~\ref{app:center case proof}.

\medskip
It remains to consider the codim-2 case.

\begin{Theorem}\label{theo:codim2} For any parameter set $(\alpha_0, \beta_0, h_0, \mu_0)$ in the codim-2 case, i.e., $\mu_0<0$ and $\beta_0/\alpha_0\le h_0<\beta_0/\alpha_0-2\mu_0-2\mu_0/\alpha_0^2$, the following holds. The explicit homogeneous DWs $\m_0$ in~\eqref{eq:explicitsol} lies in a smooth family of DWs parameterized by $(\cc ,\alpha, \beta, h, \mu)$ near $(0, \alpha_0, \beta_0, h_0, \mu_0)$. Here the values of $(s, \Omega)$ are functions of the parameters $(\cc, \alpha,\beta, h, \mu)$ and lie in a neighbourhood of $(s_0, \Omega_0)$ from~\eqref{eq:initialspeedfrequenzy}. This family is locally unique near $\m_0$ and for $\cc\neq 0$ consists of inhomogeneous flat DWs.
\end{Theorem}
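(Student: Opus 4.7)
The plan is to realise existence as a codim-\(2\) intersection problem and to solve it by an implicit function / Melnikov argument with \((s,\Omega)\) serving as the adjustable parameters. In the codim-\(2\) regime the equilibria \(E^0\) and \(E^\pi\) of the desingularised system~\eqref{eq:coherent2} satisfy \(w_u^0=1\) and \(w_s^\pi=1\), so \(W^0_u\) and \(W^\pi_s\) are one-dimensional curves in the three-dimensional phase space and a heteroclinic connection between them is a codimension-\(2\) event. Since \(s\) and \(\Omega\) enter the vector field but not the asymptotic equilibrium positions on the blow-up charts, they are the natural Lyapunov--Schmidt parameters to adjust.

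Concretely, I would fix a two-dimensional section \(\Sigma\) transverse to the explicit homogeneous heteroclinic \(\m_0\) from~\eqref{eq:atan} at some intermediate \(\xi=\xi_\ast\). Smooth dependence of the hyperbolic equilibria and their invariant manifolds on parameters gives, for \((\cc,\alpha,\beta,h,\mu,s,\Omega)\) near \((0,\alpha_0,\beta_0,h_0,\mu_0,s_0,\Omega_0)\), well-defined crossings of the perturbed \(W^0_u\) and \(W^\pi_s\) with \(\Sigma\); subtracting them defines a smooth splitting map
\[
\Xi\colon (\cc,\alpha,\beta,h,\mu,s,\Omega)\longrightarrow \Sigma\cong \R^2
\]
with \(\Xi=0\) at the base point thanks to \(\m_0\). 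Solving \(\Xi=0\) locally in \((s,\Omega)\) by the implicit function theorem then yields the claimed smooth functions \(s=s(\cc,\alpha,\beta,h,\mu)\), \(\Omega=\Omega(\cc,\alpha,\beta,h,\mu)\) together with a locally unique connecting orbit.

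The main obstacle is the non-degeneracy condition \(\det\partial_{(s,\Omega)}\Xi\neq 0\) at the base point. The linearisation of~\eqref{eq:coherent2} along \(\m_0\), viewed as a Fredholm operator on suitable exponentially weighted spaces, has index \(w_u^0+w_s^\pi-3=-1\) and one-dimensional kernel spanned by the translational mode \(\m_0'\), so its cokernel is two-dimensional and admits a basis of bounded adjoint solutions \(\psi_1,\psi_2\). Non-degeneracy is equivalent to invertibility of the \(2\times 2\) Melnikov matrix
\[
M_{ij}=\int_{\R}\langle\psi_i,\partial_{\lambda_j}F(\m_0)\rangle\,d\xi,\qquad (\lambda_1,\lambda_2)=(s,\Omega),
\]
where \(F\) is the right-hand side of~\eqref{eq:coherent2}. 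Using the closed form of \(\m_0\) and of the \(\psi_i\) (obtainable by solving the adjoint linear system along the \(\operatorname{sech}\)-profile) reduces this to an explicit scalar computation; establishing non-vanishing of the determinant under the codim-\(2\) inequality \(s_0<2\smu/\alpha\) is the technical crux of the argument.

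Once the implicit function theorem is applied, the remaining assertions are essentially free. For \(\cc\neq 0\) the \(q\)-components of the perturbed equilibria \(Z^0_-\) and \(Z^\pi_-\) are nonzero, since by Remark~\ref{rem:equilibria} they depend on \(\b^+\) and \(\b^-\) respectively, and \(\b^+\neq \b^-\) once \(\cc\neq 0\); hence \(\phi'=q\not\equiv 0\) and the DW is inhomogeneous in the sense of Definition~\ref{def:inhom}. The asymptotic states remain genuine equilibria, so \(|q(\xi)|\) has finite limits as \(|\xi|\to\infty\) and the DW is flat by Definition~\ref{def:flat}. Local uniqueness near \(\m_0\) is built into the implicit function theorem.
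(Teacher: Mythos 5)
Your architecture is exactly the paper's: a Lyapunov--Schmidt/Melnikov reduction along $\m_0$, two bounded solutions of the adjoint variational equation spanning the cokernel, a $2\times 2$ Melnikov matrix in the unfolding parameters $(s,\Omega)$, and the implicit function theorem to select $(s,\Omega)$ as functions of $(\cc,\alpha,\beta,h,\mu)$. The Fredholm/dimension counting, the choice of section, and the deduction of inhomogeneity and flatness at the end are all consistent with Appendix~\ref{app:proof}.

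However, there is a genuine gap: you explicitly defer the non-degeneracy condition $\det\partial_{(s,\Omega)}\Xi\neq 0$, calling it ``the technical crux'' --- and that crux \emph{is} the proof. Everything before it is standard machinery; the theorem stands or falls on whether the $2\times 2$ minor of the Melnikov matrix in the $(s,\Omega)$ directions is invertible throughout the whole codim-2 range $0\le s_0<2\smu/\alpha$, and this cannot be waved through. The paper establishes it by computing the four integrals $I_C$, $I_S$, $I_{CC}$, $I_{CS}$ in~\eqref{eq:melnikov integral} in closed form via contour integration over a rectangle with a pole at $\rmi\pi/(2\smu)$ and the residue theorem; the relevant determinant then collapses to a positive multiple of $\left(\alpha I_S-I_C\right)^2+\left(I_S+\alpha I_C\right)^2=(1+\alpha^2)^2\pi^2 s_0^2\,\rme^{\pi s_0/\smu}$, which is nonzero for all $s_0\neq 0$. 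Moreover, the codim-2 regime includes the boundary case $h_0=\beta_0/\alpha_0$, i.e.\ $s_0=0$, where this expression vanishes and the generic argument degenerates; the paper treats it separately ($I_S=I_{CC}=I_{CS}=0$, $I_C=1/(2\smu)$, giving the rank-2 matrix~\eqref{eq:melnikovzerospeed}). Your proposal neither performs the computation nor notices that $s_0=0$ needs its own treatment. A secondary, more easily repaired weakness: your inhomogeneity argument via $\b^+\neq\b^-$ only shows the two asymptotic equilibria cannot both have $q=0$ \emph{with the same $p$}, which is not quite a proof that $q\not\equiv 0$; it is cleaner to argue, as the paper does elsewhere, that homogeneous DWs require $\cc=0$ by~\cite[Theorem 5]{Melcher2017}, or that the equilibria move off $\{q=0\}$ because the $\cc$-derivative of the $q$-equation is nonzero.
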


The proof of Theorem~\ref{theo:codim2} is presented in Appendix~\ref{app:proof} and is based on the Melnikov method for perturbing from $\m_0$. As the unperturbed heteroclinic orbit has codimension two, the bifurcation is studied in a three-parametric family with perturbation parameters $\eta\coloneqq\left(\cc,s,\Omega\right)\tran$, which yields a two-component splitting function $M(\eta)$ that measures the mutual displacement of the manifolds $W^0_u$ and $W^\pi_s$. 

\medskip
Due to the fact that $\Re (\nu^\pi_{j,-})<0$ also for $\beta=0$ in the codim-0 regime and the case $\beta=0$ is included in Theorem~\ref{theo:center} as well as Theorem~\ref{theo:codim2}, we immediately get the following result.

\begin{Corollary}Inhomogeneous flat DWs also exist in the~\ref{LLG} equation $(\beta=0)$, which can be flat or non-flat, respectively.
\end{Corollary}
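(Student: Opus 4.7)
The plan is to argue that nothing in the setup of Theorems~\ref{theo:codim0}, \ref{theo:center}, and~\ref{theo:codim2} actually uses $\beta_0>0$, so one simply invokes all three results at $\beta_0=0$ and reads off that the DWs produced are genuine solutions of~\ref{LLG}. The first thing I would check is that the unperturbed object -- the explicit homogeneous family~\eqref{eq:atan} together with the selection~\eqref{eq:initialspeedfrequenzy} -- remains well defined and nontrivial at $\beta=0$: in that limit $s_0=\alpha h/(\smu(1+\alpha^2))$ and $\Omega_0=h/(1+\alpha^2)$, so for every desired value of $s_0\geq 0$ there is a unique $h_0\geq 0$, and the asymptotic equilibria $E^0,E^\pi$ and their spatial eigenvalues~\eqref{eq:eigenvalues} retain the same analytic form, with $\Re(\nu^\pi_{k,-})=-\alpha s_0+2\smu$ depending on $\beta$ only through $s_0$.

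Next I would verify that the three regimes are all accessible with $\beta_0=0$. Setting $\beta_0=0$ in the codim-0 threshold gives $h^\ast|_{\beta=0}=-2\mu_0(1+\alpha_0^2)/\alpha_0^2>0$ (since $\mu_0<0$), so the codim-0 region $h_0>h^\ast|_{\beta=0}$, the center locus $h_0=h^\ast|_{\beta=0}$, and the codim-2 region $0\leq h_0<h^\ast|_{\beta=0}$ are each nonempty; the standing hypothesis $h_0\geq\beta_0/\alpha_0$ becomes simply $h_0\geq 0$. With this in hand, I would apply Theorem~\ref{theo:codim0} with parameter base point $(\cc,\alpha_0,0,h_0,\mu_0)$ for $h_0>h^\ast|_{\beta=0}$ to obtain the flat inhomogeneous family, Theorem~\ref{theo:codim2} with the analogous base point for $0\leq h_0<h^\ast|_{\beta=0}$ to obtain another flat inhomogeneous family, and Theorem~\ref{theo:center} at the threshold to obtain generically non-flat inhomogeneous DWs. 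In each case, since $\beta=0$ is preserved under the perturbation, the resulting solutions of~\eqref{LLGS} collapse to solutions of~\eqref{LLG}.

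The ``hard part'' is essentially trivial: one needs to confirm that none of the hypotheses of Theorems~\ref{theo:codim0}--\ref{theo:codim2} -- the dimension count $w^\pi_s\in\{1,3\}$, the spectral condition at $E^\pi$, the Melnikov transversality used in the codim-2 proof, and the Hamiltonian structure exploited at the center case -- degenerate when $\beta=0$. Inspection of~\eqref{eq:eigenvalues} and of the Hamiltonian $H^\pi$ in~\eqref{eq:hamiltonian fct} (which depends on $\beta$ only through the combination $\b^-/\alpha$) shows that the regime classification is entirely governed by $h$ and $s_0$, so the switch $\beta\mapsto 0$ only shifts the location of the thresholds and leaves the bifurcation geometry intact. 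The corollary then follows by direct specialization.
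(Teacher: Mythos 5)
Your proposal is correct and follows essentially the same route as the paper: the paper derives this corollary by observing that the hypotheses of Theorems~\ref{theo:codim0}--\ref{theo:codim2} (in particular the sign condition $\Re(\nu^\pi_{j,-})<0$ in the codim-0 regime and the explicit allowance $\beta\ge 0$ in Theorems~\ref{theo:center} and~\ref{theo:codim2}) already cover $\beta=0$, so the result is a direct specialization, exactly as you argue. Your additional checks (nonemptiness of the three regimes at $\beta=0$, the thresholds depending on $\beta$ only through $s_0$ and $h^\ast$) are a sound, slightly more explicit version of the same one-line argument.
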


Theorem~\ref{theo:codim2} completes the existence study of DWs. Therefore, for any value of the applied field $h$ there exists a heteroclinic connection between the blow-up charts with $\cc\neq0$ and $q\not\equiv 0$, thus an inhomogeneous (typically flat) DW. Recall that we have focused on right moving DWs, but all results are also valid for left moving walls due to symmetry. Therefore inhomogeneous DWs exist with $\cc\neq 0$ for any value of the applied field $h\in\R$.

\section{Numerical Results}\label{sec:continuation}
Numerical continuation for ordinary differential equations is an established tool for bifurcation analysis in dynamical system. In this section we present continuation results to illustrate the analytical results discussed in \S~\ref{sec:domain walls}. In particular, we will focus on continuation in the parameter $\cc$ in the range of $(-0.5,0.5)$ as this perturbs away from the known family $\m_0$ from~\eqref{eq:atan} (cf.\ Figure~\ref{fig:initial arctan}) with speed and frequency determined by~\eqref{eq:initialspeedfrequenzy} for a given applied field. Note that we also focus only on right-moving fronts in this section for reasons of clarity. All results were produced by continuation in \textsc{AUTO-07P} and graphics were created with \textsc{Mathematica} as well as \textsc{MATLAB}. 

Heteroclinic orbits were detected as solutions to the boundary value problem given by the desingularized system~\eqref{eq:coherent2} plus a phase condition and boundary conditions at $\xi=-L$ and $\xi=L$ taken from the analytic equilibrium states in $p$ and $q$ on the blow-up charts (Remark~\ref{rem:equilibria}). In the codim-2 case, the four required conditions are the $p, q$ values at the charts. In the center case, the three required conditions are: (1,2) the two $p, q$ values at the left chart and (3) the energy difference determined by the function~\eqref{eq:hamiltonian fct}. In the codim-0 case, the two required conditions are the $p$ values at both charts. Moreover, we found $L=50$ was sufficiently large.

In order to relate to~\eqref{LLGS}, we plot most of the profiles after blowing down to the sphere rather than using the ODE phase space.

\begin{figure}[ht]
    \centering
    \begin{subfigure}[b]{0.25\textwidth}
        \includegraphics[width=\textwidth]{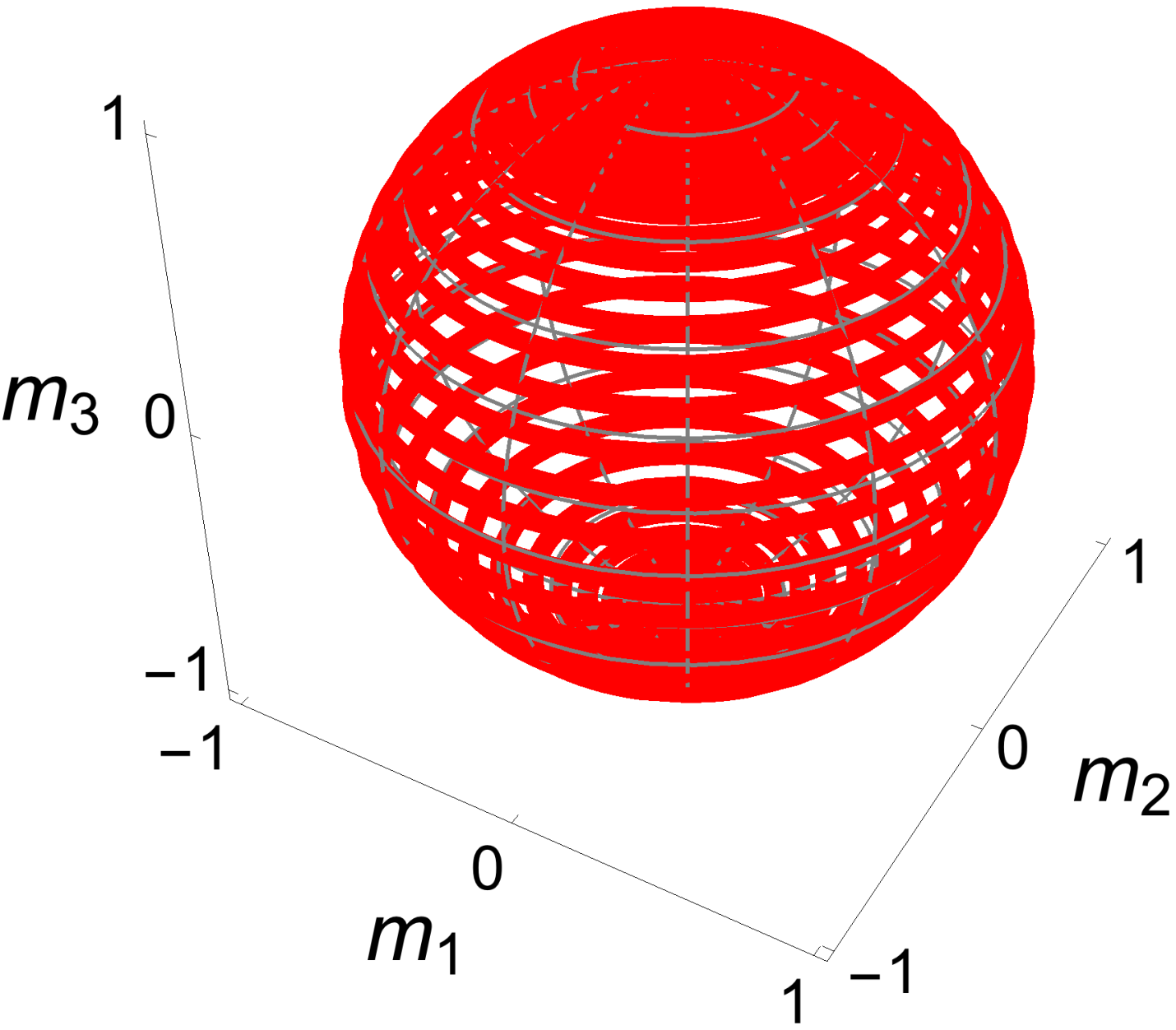}
        \caption{}
    \end{subfigure}
    \hspace*{0.25\textwidth}
    \begin{subfigure}[b]{0.35\textwidth}
        \includegraphics[width=\textwidth]{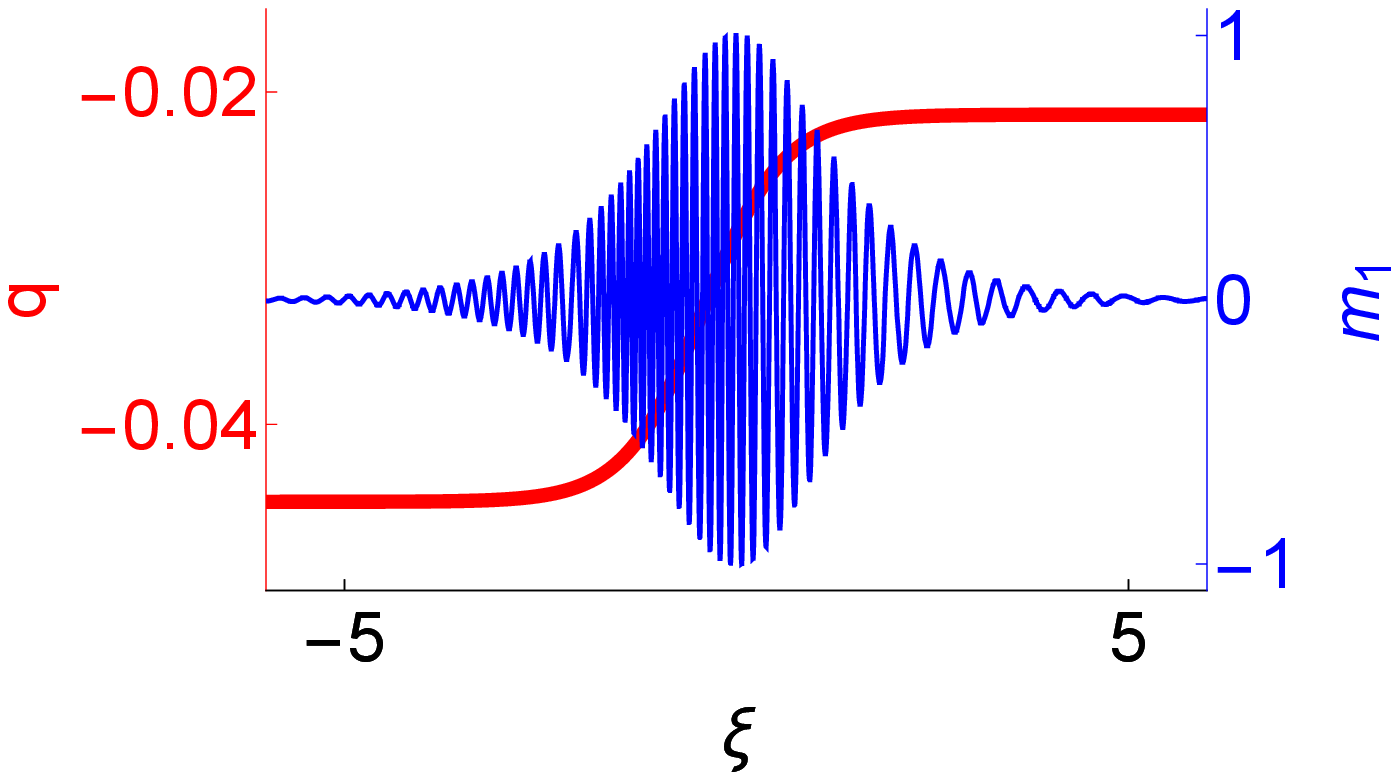}
        \caption{}
    \end{subfigure}
    \caption{\small DWs obtained from continuation of $\m_0$ in system~\eqref{eq:coherent2} in the codim-2 regime $h=0.5$ ($h^\ast=10.2$) with initial speed and frequency $s_0=0.12$ as well as $\Omega_0=0.44$, and $(\cc, s, \Omega)=(-0.5, 0.11221, 0.44077)$. (a) Projection onto the sphere. (b) Zoom-in of corresponding $q$-profile (red) and $m_1$ component (blue).}
    \label{fig:continuation h=0.5}
\end{figure}

\medskip
Following the standing assumption on positive speeds and using $\cc$ as well as $h$ as the main parameters, we keep the other parameters fixed with values
$$\alpha=0.5, \beta=0.1, \mu=-1.$$
The value of the applied field for the center case, given the fixed parameters, is $h^\ast=10.2$ (cf.\ \S~\ref{sec:inhomogeneous DW}), which leads to $s_0=4.0$ as well as $\Omega_0=8.2$ (cf.~\eqref{eq:initialspeedfrequenzy}). 

\subsection{Codim-2 case}
The lower boundary for values of the applied field $h$ lies in the codim-2 regime and is given by $h=\beta/\alpha=0.2$. As a first numerical example we consider the slightly larger value $h=0.5$. The results upon continuation in the negative as well as positive direction of $\cc$ are presented in Figures~\ref{fig:continuation h=0.5 plus},~\ref{fig:continuation h=0.5 plus m1 component zoom-in}, and~\ref{fig:continuation h=0.5}. The inhomogeneous nature of these solutions ($\cc\neq0$) is reflected in the significantly varying azimuthal angles, also visible in the oscillatory nature of the $m_1$ component in Figures~\ref{fig:continuation h=0.5 plus m1 component zoom-in} as well as~\ref{fig:continuation h=0.5}b.

The linear part of the splitting function~\eqref{eq:splitting function} (see Theorem~\ref{theo:codim2}), which predicts the direction of parameter variation for the existence of inhomogeneous DWs ($\cc\neq0$) to leading order, reads in this example
$$M(\cc,s,\Omega)=\begin{pmatrix}
-0.00147567& -0.499245& 0.245945\\ -0.000577908& -0.245945& -0.499245
\end{pmatrix}\cdot \begin{pmatrix}\cc\\s\\\Omega\end{pmatrix}, $$ so that $M=(0, 0)\tran$ for $(s, \Omega)=\left( -0.00283744\cdot \cc, 0.000240252\cdot \cc \right)$. For the parameter values in Figure~\ref{fig:continuation h=0.5} and~\ref{fig:continuation h=0.5 plus} we obtain, respectively, 
$$M(-0.5, -0.007788, 0.000771)=\left( 0.00481558, 0.00181945 \right)\tran,$$
$$M(0.5, -0.007973, 0.007173) =\left(0.00648248, -0.00133122\right)\tran.$$
Note that here the splitting of the (1-dimensional) unstable manifold of the left equilibrium and the (1-dimensional) stable manifold of the right equilibrium differ, i.e., are in opposite directions (signs) in frequency and speed for variations in $\cc$.

\begin{figure}[t]
    \centering
    \begin{subfigure}[b]{0.25\textwidth}
        \includegraphics[width=\textwidth]{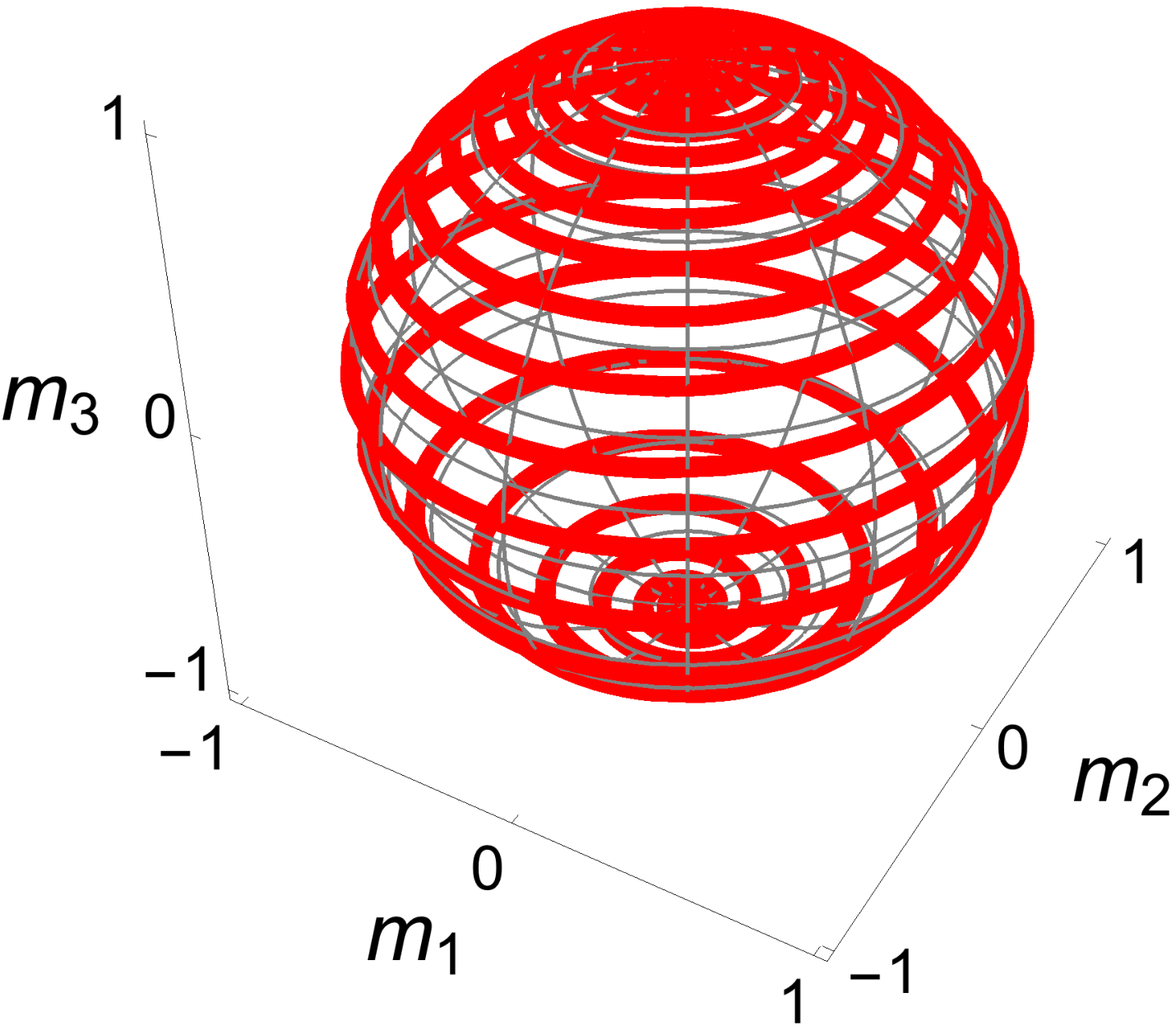}
        \caption{}
    \end{subfigure}
    \hspace{0.25\textwidth}
    \begin{subfigure}[b]{0.25\textwidth}
        \includegraphics[width=\textwidth]{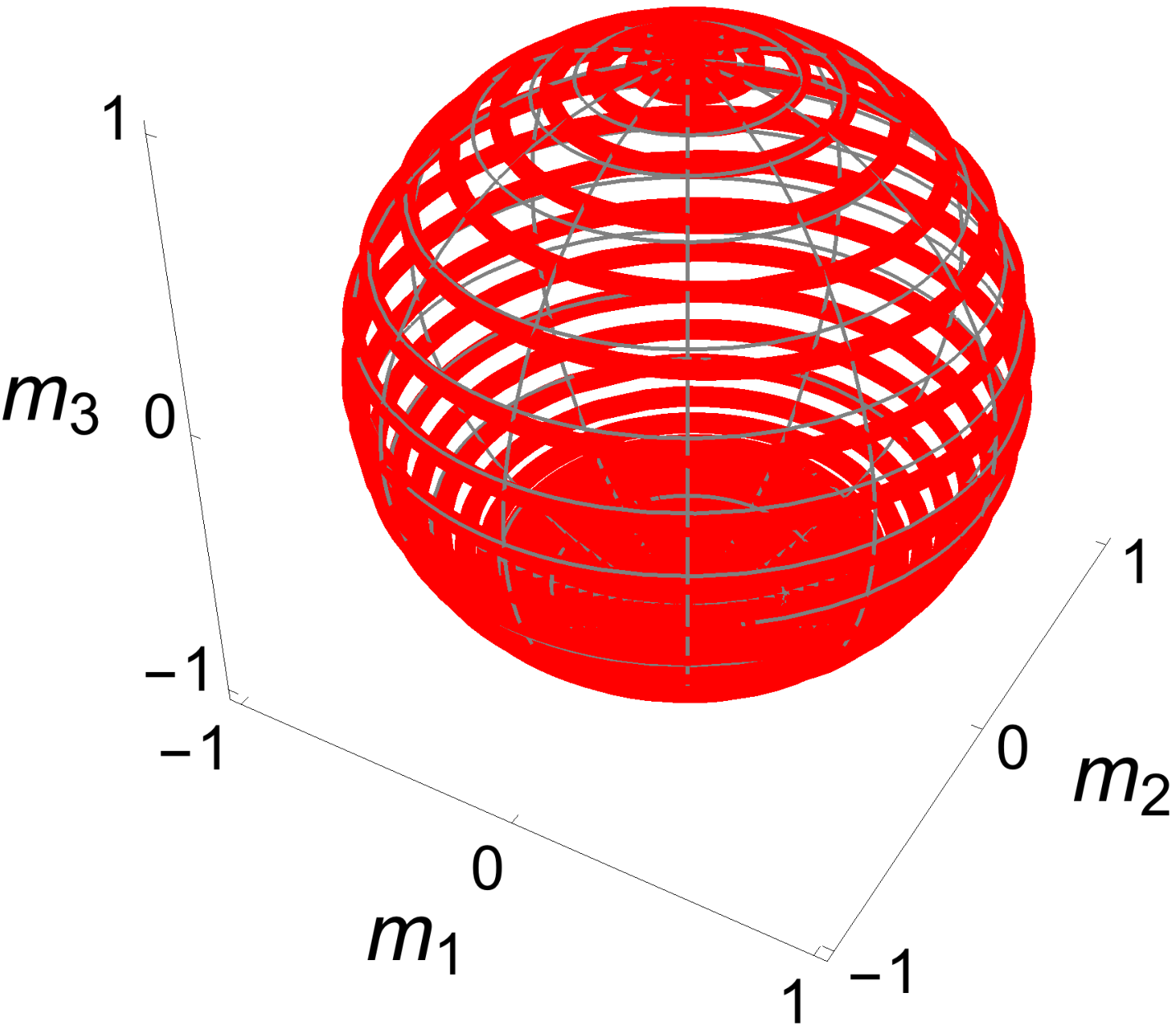}
        \caption{}
    \end{subfigure}
    \caption{\small DWs obtained from continuation of $\m_0$ in system~\eqref{eq:coherent2} projected onto the sphere in the codim-2 regime $h=10.1$ ($h^\ast=10.2$) with initial speed and frequency $s_0=3.96$ and $\Omega_0=8.12$. (a) $(\cc, s, \Omega)=(-0.5, 3.99541, 8.05973)$. (b) $(\cc, s, \Omega)=(0.5, 4.08089, 8.22402)$.}
    \label{fig:continuation h=10.1}
\end{figure}

In addition note the decrease in frequency in the $m_1$ component, and thus also in the $m_2$ as a result of the increase of the $q$ component towards zero, cf.\ Figure~\ref{fig:continuation h=0.5}b. Here, the azimuthal angle decreases since $\phi=\int q$ and $q<0$.

\medskip
As a further example in the codim-2 regime, we consider $h=10.1<10.2=h^\ast$ near the upper boundary of the codim-2 regime in terms of the applied field $h$.  The results of the continuation in $\cc$ are presented in Figure~\ref{fig:continuation h=10.1}. The linear approximation of the splitting in this case is given by $M(-0.5, 0.03541, -0.06027)=\left( -0.000175537, -0.00104378 \right)\tran$ as well as $M(0.5, 0.12089, 0.10402)=\left( -0.00149519,  0.0014975\right)\tran$ in (a) and (b), respectively. Note that the direction of splitting of the two components in this case is also dependent on the polarity sign, as in the previous example. In both cases ($h=0.5$ and $h=10.1$), the continuation results look basically the same in the codim-2 regime, where the solution is, roughly speaking, constantly spiraling down from the north to the south pole. 

\subsection{Center case} 
\begin{figure}[ht]
    \centering
    \begin{subfigure}[b]{0.25\textwidth}
        \includegraphics[width=\textwidth]{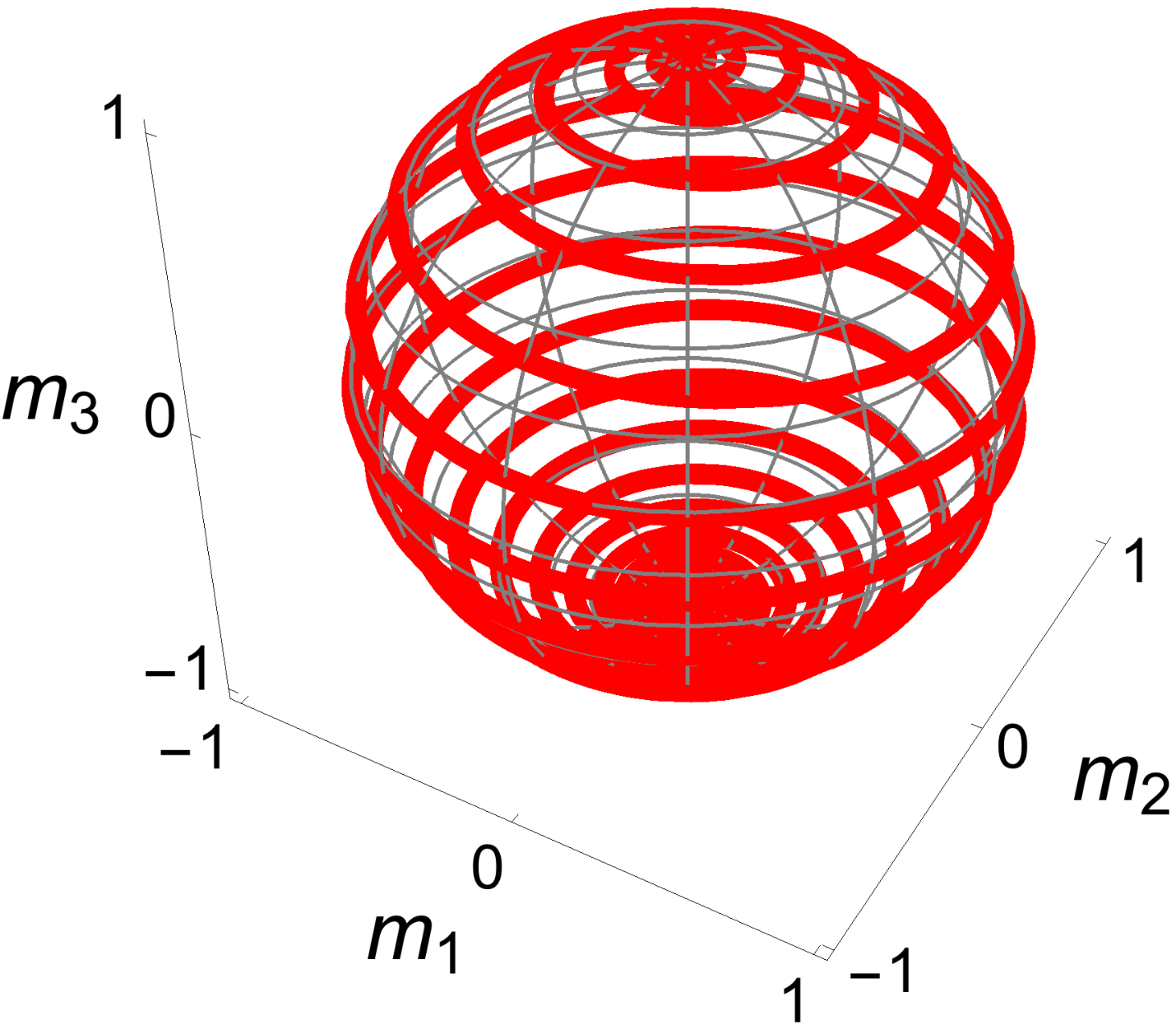}
        \caption{}
    \end{subfigure}
    \hspace{0.25\textwidth}
    \begin{subfigure}[b]{0.27\textwidth}
        \includegraphics[width=\textwidth]{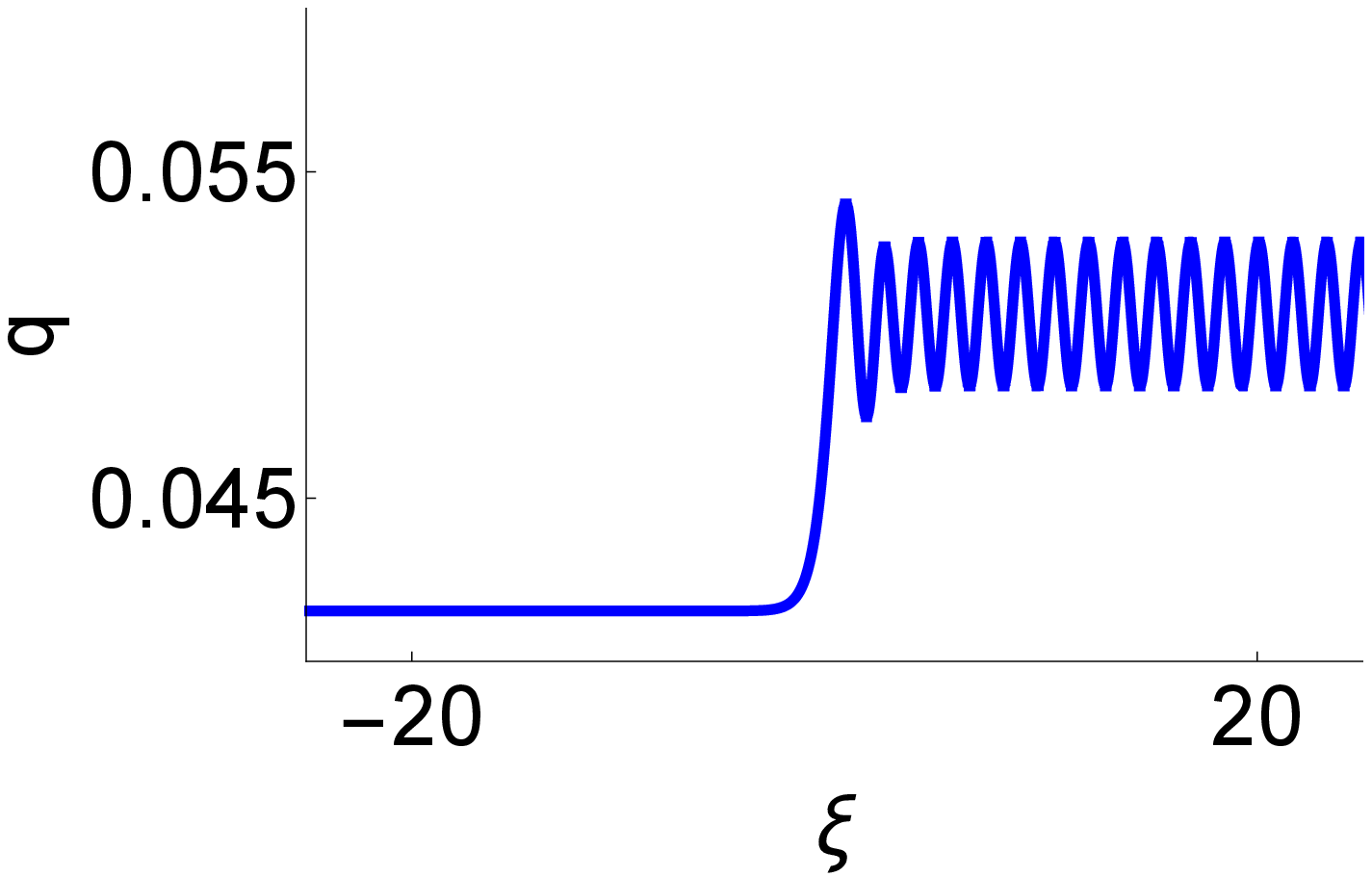}
        \caption{}
    \end{subfigure}
    \caption{\small DWs obtained from continuation of $\m_0$ in system~\eqref{eq:coherent2} in the center regime $h=h^\ast=10.2$ with $s=s_0=4, \Omega=\Omega_0=8.2$, and $\cc=0.5$. (a) Projection onto the sphere. (b) Profile of corresponding $q$-component.}
    \label{fig:continuation h=10.2}
\end{figure}

We perform computations in the center case with applied field $h=h^\ast=10.2$ and fixed frequency $\Omega=\b^-/\alpha+s^2/2$ (see Proposition~\ref{prop:hamilton_general} and its discussion details). Theorem~\ref{theo:center} shows that the right asymptotic state is generically a periodic orbit and more precisely that in case $\cc=0$, no constellation of $h$ and $s$ exists, both not equal to zero, for which the right asymptotic state is the (shifted) equilibrium. The results of continuation in $\cc$ projected on the sphere look quite the same, which is why only the result for $\cc=0.5$ is presented in Figure~\ref{fig:continuation h=10.2}. The fact that the right asymptotic state is a periodic orbit on the blow-up chart $\theta=\pi$ is reflected by the nearly constant oscillations in the $q$-profile for $\xi$ close to the right boundary (cf.\ Figure~\ref{fig:continuation h=10.2}b).

\begin{figure}[h]
    \centering
    \begin{subfigure}[b]{0.25\textwidth}
        \includegraphics[width=\textwidth]{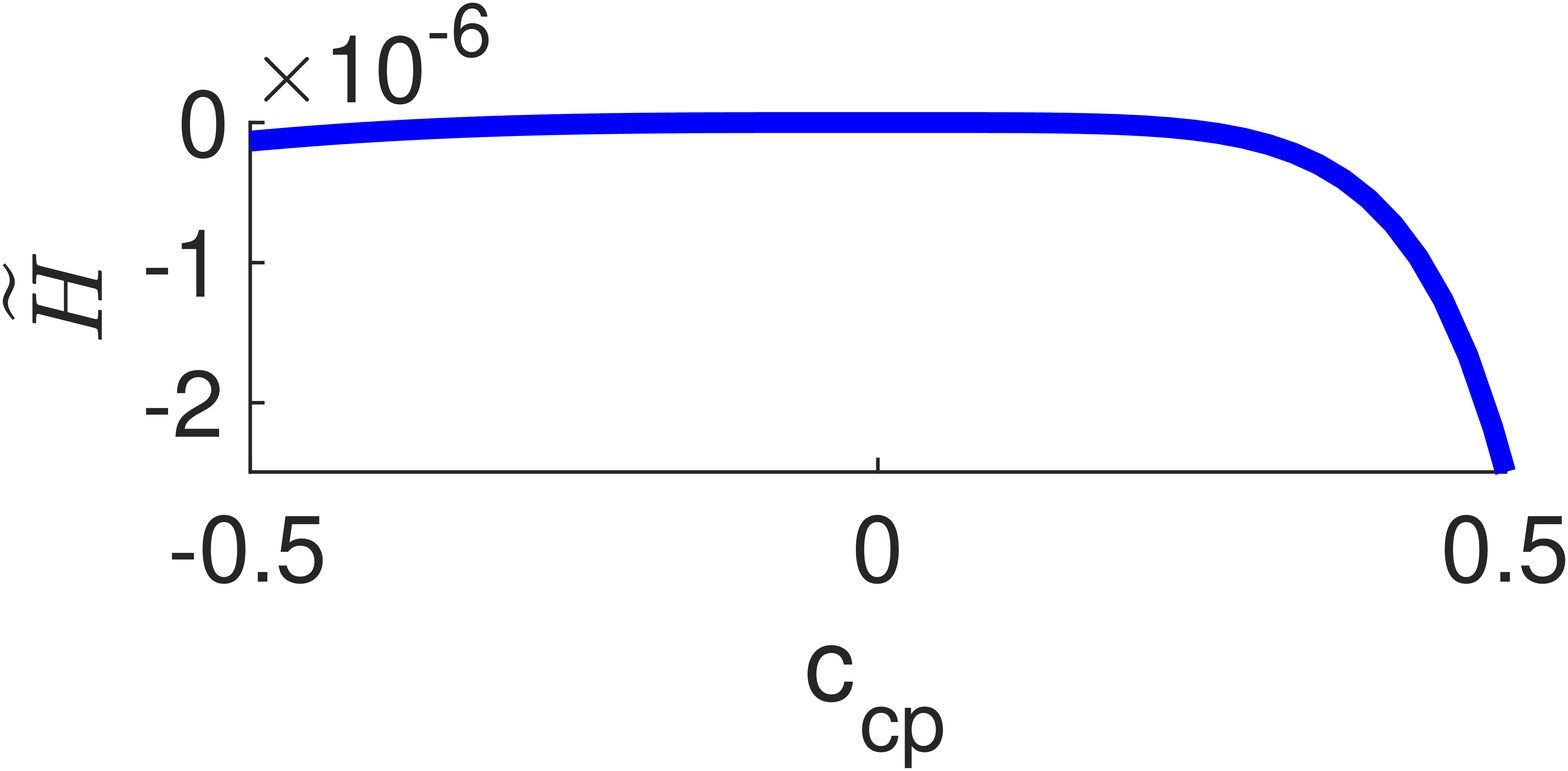}
        \caption{}
    \end{subfigure}
    \hfill
    \begin{subfigure}[b]{0.25\textwidth}
        \includegraphics[width=\textwidth]{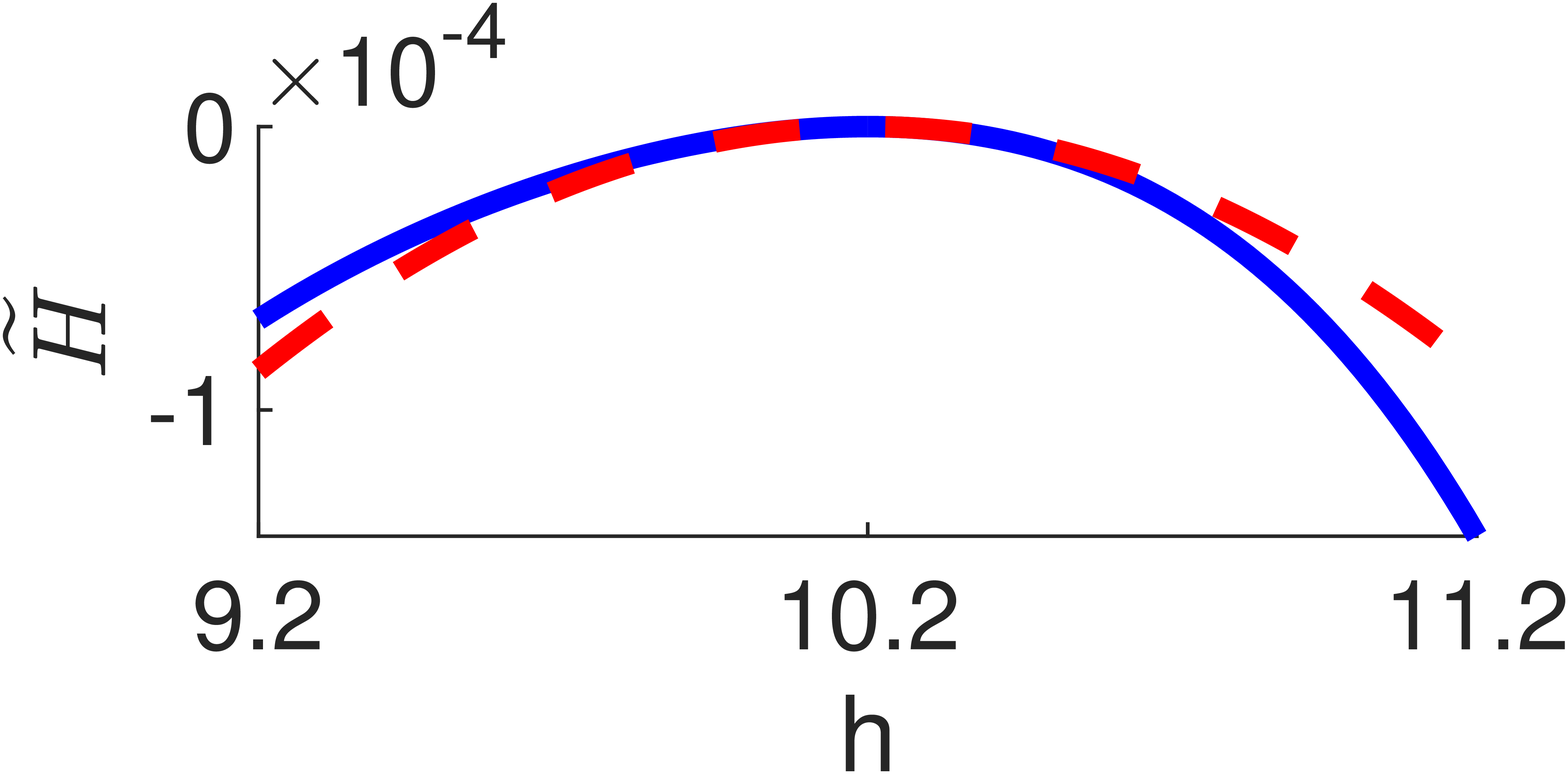}
        \caption{}
    \end{subfigure}
    \hfill
    \begin{subfigure}[b]{0.25\textwidth}
        \includegraphics[width=\textwidth]{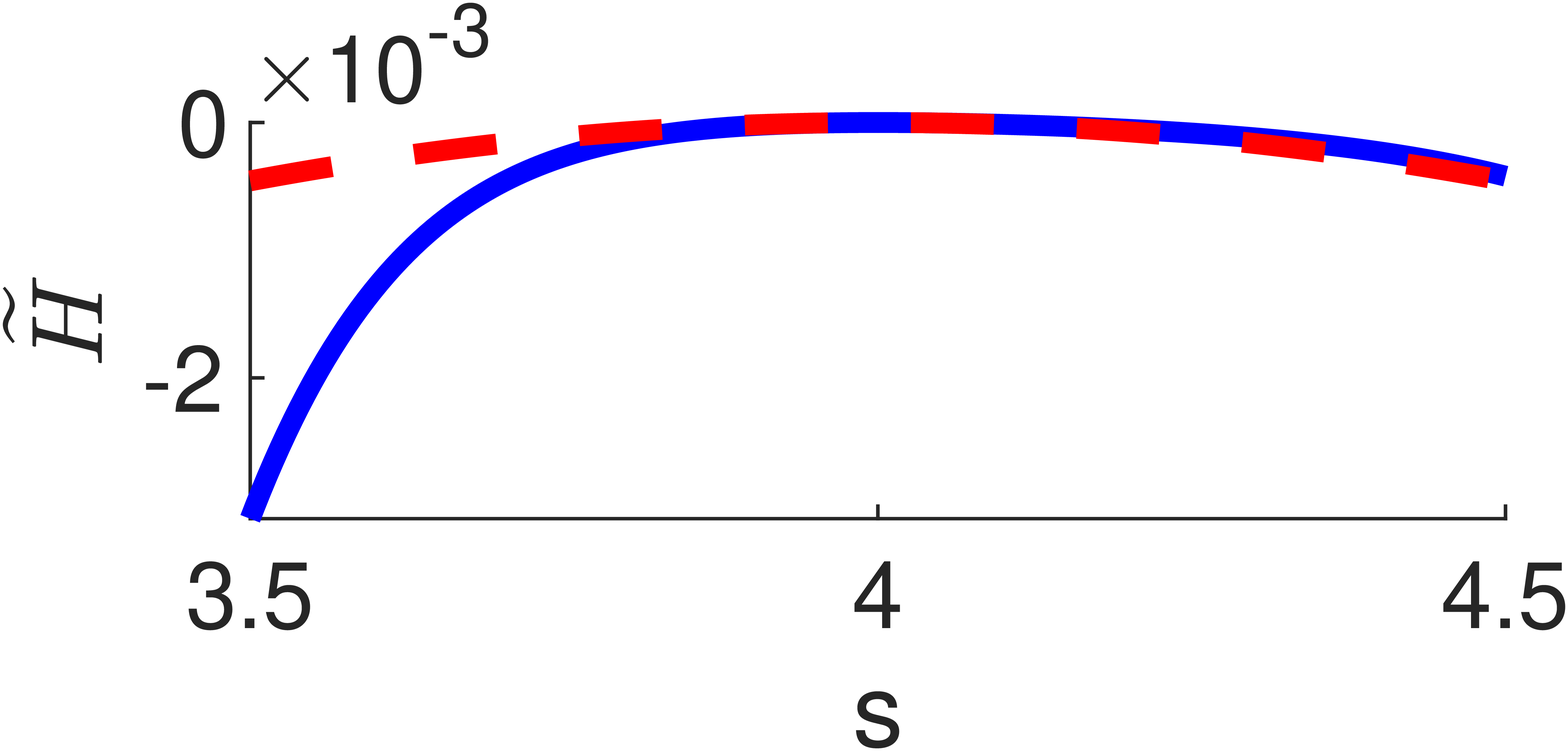}
        \caption{}
    \end{subfigure}
    \caption{\small Continuation of $\m_0$ in system~\eqref{eq:coherent2} in the center case with applied field $h=h^\ast=10.2$ and fixed frequency $\Omega=\b^-/\alpha+s^2/2$; here $s_0=4$. Shown is the energy difference (solid blue line) between the equilibrium and asymptotic state from continuation on right boundary against the continuation parameter $\cc$ in (a), $h$ in (b), and $s$ in (c). The red dashed curve in (b) and (c) is the quadratic approximation~\eqref{eq:second order approx explicit}.}
    \label{fig:energy error}
\end{figure}

That the right state is not the equilibrium in the blow-up chart is further corroborated by computing the difference in energy $\tH$ between this equilibrium state (see Remark~\ref{rem:equilibria}) and the approximate right asymptotic state obtained from continuation. The analytic prediction of this difference up to second order is given by~\eqref{eq:second order approx}, which reads, for the chosen parameters, 
\begin{equation}\label{eq:second order approx explicit}
    -0.006612 + 0.00673 s- 0.00183 s^2 - 0.00134 h - 0.000086 h^2 + 0.00077 h s \,.
\end{equation}
As this analytic prediction is independent of $\cc$ the dependence of $\tH$ on $\cc\approx0$ is of cubic or higher power. Indeed, the results plotted in Figure~\ref{fig:energy error}a suggest an at least quartic dependence since a maximum lies at $\cc=0$. The asymmetric nature of the graph suggests that odd powers appear in the expansion beyond our analysis, but also note the order of \num{e-6} in $\widetilde{H}$. In addition to the dependence on $\cc$, continuations for $\cc=0$ of $\tH$ in $h$ with fixed $s=s_0=4$ and in $s$ with fixed $h=h^\ast=10.2$ are plotted in Figure~\ref{fig:energy error}b and~\ref{fig:energy error}c, respectively. Here we also plot the quadratic prediction~\eqref{eq:second order approx explicit}.

\subsection{Codim-0 case} 
Next, we consider an applied field $h=10.3$ in the codim-0 regime, just above the applied field value for the center case $h^*=10.2$. The results of continuation in $\cc$ are plotted on the sphere in Figure~\ref{fig:continuation h=10.3}. The azimuthal profile in $\phi$ and hence in $q$ are non-trivial as predicted for inhomogeneous DWs. 

In the ODE, $q$ possesses an oscillating profile and has a monotonically decreasing amplitude in both cases. This is a consequence of the proximity to the center case and the convergence to equilibria (see \S~\ref{sec:inhomogeneous DW} for details). Recall that the speed and frequency are not selected by the existence problem during continuation in $\cc$, but are taken as the fixed parameters $(s_0,\Omega_0)$ defined in~\eqref{eq:initialspeedfrequenzy}. 

\begin{figure}[t]
    \centering
    \begin{subfigure}[b]{0.25\textwidth}
        \includegraphics[width=\textwidth]{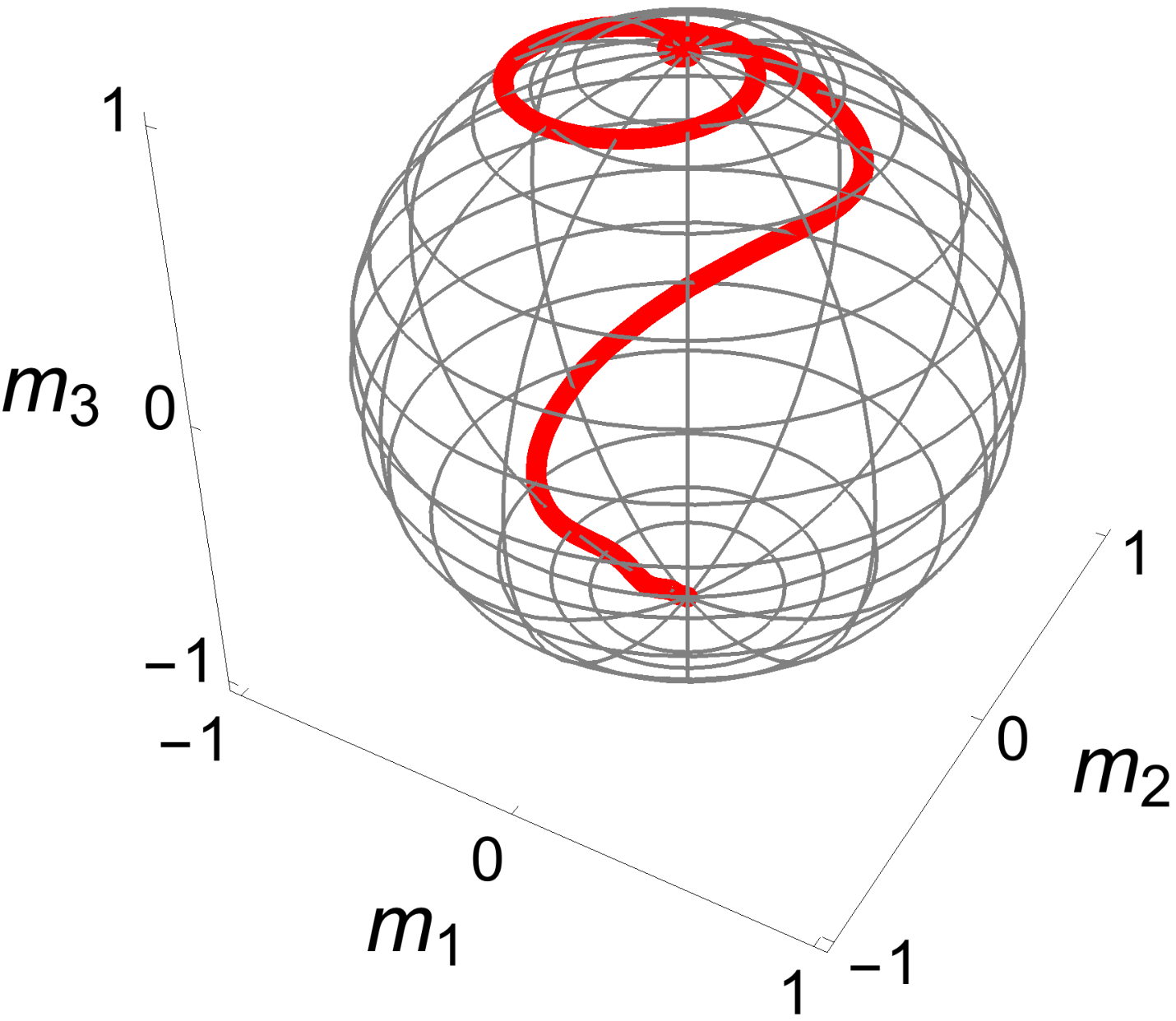}
        \caption{}
    \end{subfigure}
    \hspace{0.25\textwidth}
    \begin{subfigure}[b]{0.25\textwidth}
        \includegraphics[width=\textwidth]{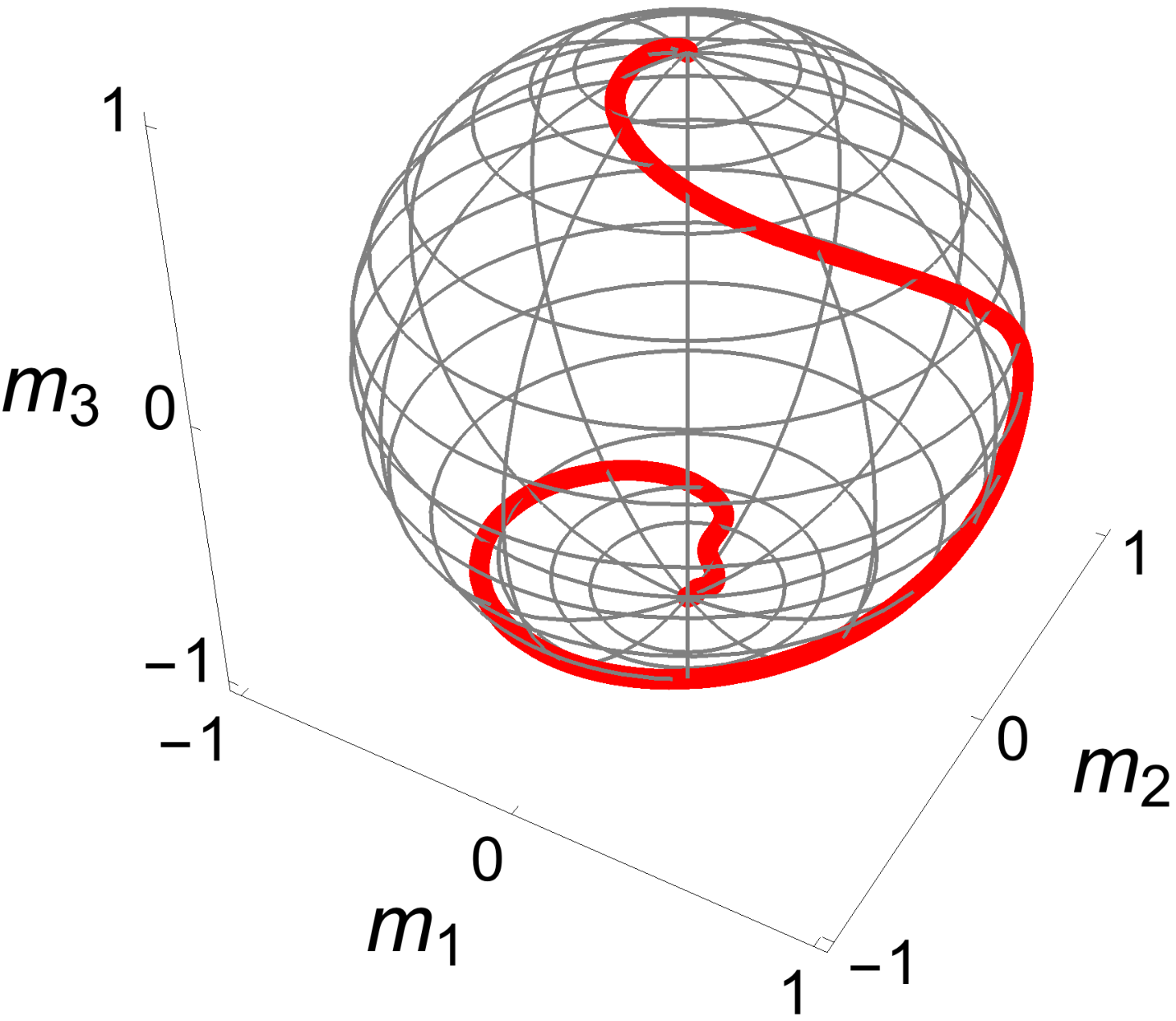}
        \caption{}
    \end{subfigure}
    \caption{\small DWs obtained from continuation of $\m_0$ in system~\eqref{eq:coherent2} projected onto the sphere in the codim-0 regime $h=10.3$ ($h^\ast=10.2$) and initial speed and frequency $s_0=4.04$ and $\Omega_0=8.28$. (a) $\cc=-0.5$. (b) $\cc=0.5$.}
    \label{fig:continuation h=10.3}
\end{figure}

The final example is for a relatively large applied field $h=50$ in the codim-0 regime, far away from the center case, and the results of continuation in $\cc$ projected on the sphere are presented in Figure~\ref{fig:continuation h=50 plus} as well as Figure~\ref{fig:continuation h=50}a. Moreover, the corresponding $m_1$ and $m_2$ profiles for $\cc=0.5$ are presented in Figure~\ref{fig:continuation h=50 plus m1,m2 component}, and for $\cc=-0.5$, in Figure~\ref{fig:continuation h=50}b. As in the previous example, the inhomogeneous nature is visible in the non-trivial azimuthal profile.

\begin{figure}[h]
    \centering
    \begin{subfigure}[b]{0.25\textwidth}
        \includegraphics[width=\textwidth]{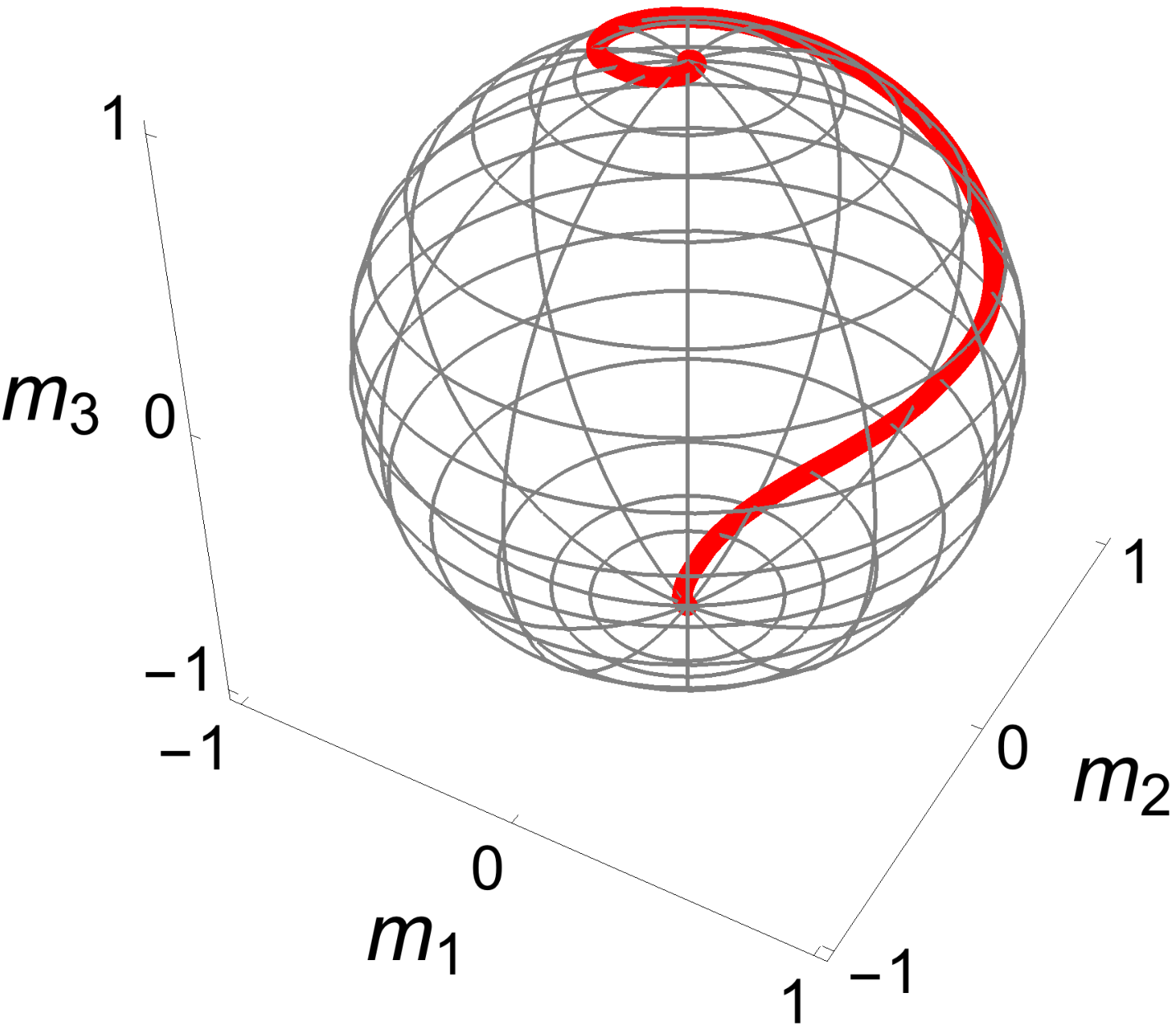}
        \caption{}
    \end{subfigure}
    \hspace{0.25\textwidth}
    \begin{subfigure}[b]{0.25\textwidth}
        \includegraphics[width=\textwidth]{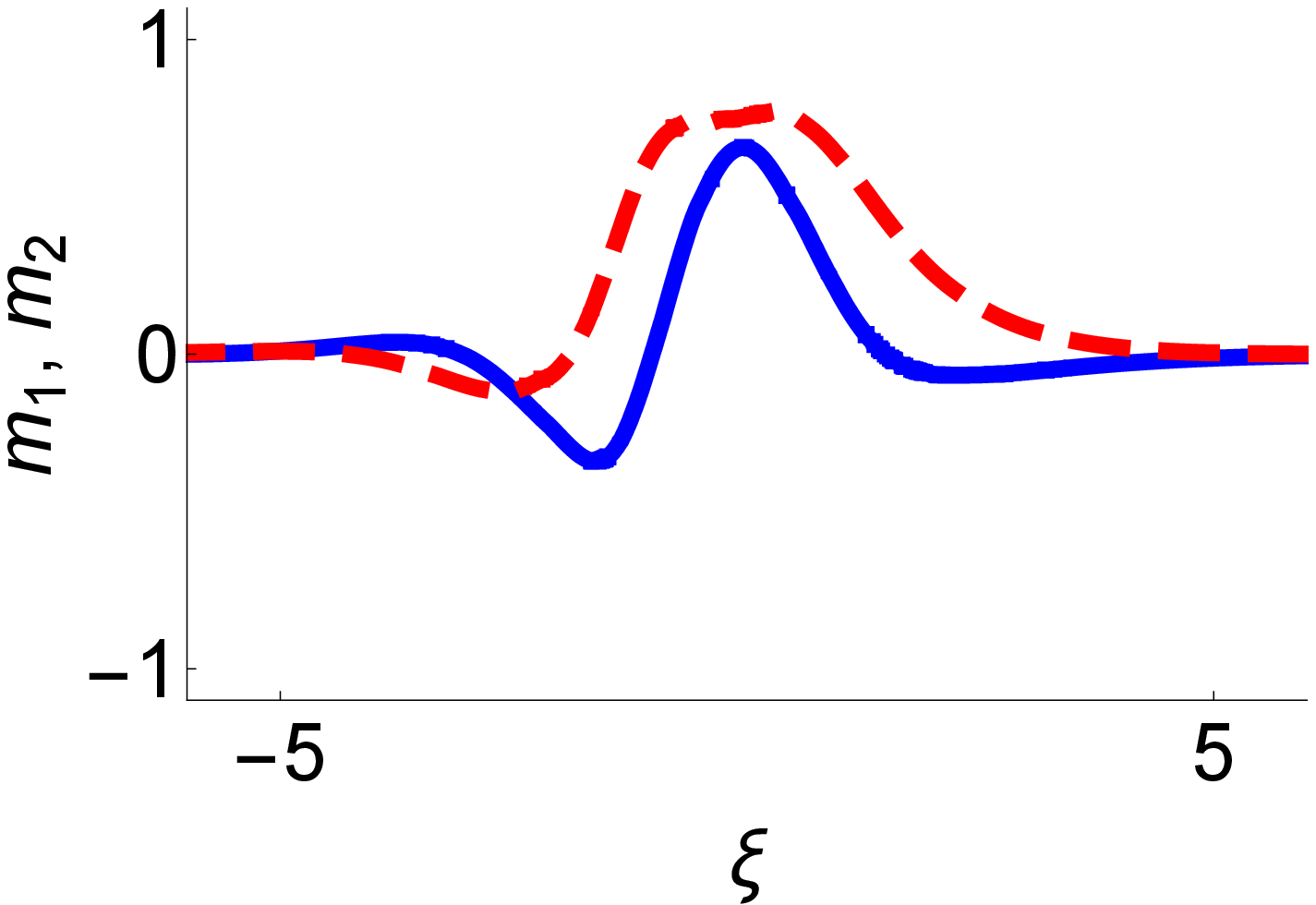}
        \caption{}
    \end{subfigure}
    \caption{\small DWs obtained from continuation of $\m_0$ in system~\eqref{eq:coherent2} in the codim-0 regime $h=50$ ($h^\ast=10.2$) with initial speed and frequency $s_0=19.92, \Omega_0=40.04$, and $\cc=-0.5$. (a) Projection onto the sphere. (b) Zoom-in of the corresponding $m_1$ (solid blue) and $m_2$ (dashed red) component.}
    \label{fig:continuation h=50}
\end{figure}

\medskip
In summary, switching on the parameter $\cc$ leads to a variety of inhomogeneous flat as well as non-flat DW solutions, but also in case $\cc=0$ there exist inhomogeneous DWs (cf.\ Figure~\ref{fig:freezselection}) which are much more complex than the homogeneous one given by~\eqref{eq:explicitsol} (cf.\ Figure~\ref{fig:initial arctan}).

\medskip
Finally, recall from \S~\ref{sec:homogeneous DW} that in the explicit family~\eqref{eq:explicitsol}, the right moving DWs terminate at $s=0$. The question arises what happens for $\cc\neq0$ along the parameter $s$. To study this, we performed a continuation in the parameter $s$ for different values of $\cc$. For decreasing $s$ we found that numerical continuation failed at some $s>0$ for $\cc\neq0$ (cf.\ Figure~\ref{fig:existencecont}). The details of this apparent existence boundary are beyond the scope of this paper. Note that the special role of $s$ is reflected in the splitting function~\eqref{eq:melnikovzerospeed}. In case $s_0=s=0$, the first column of~\eqref{eq:melnikov integral} is zero (see~\eqref{eq:melnikovzerospeed}) and thus the parameterization of $\cc$ can not be written as a function in $s$.

In detail, we continued the analytic solution~\eqref{eq:explicitsol} in $\cc$ away from zero for different initial values of $\beta/\alpha<h<h^\ast$. This led to inhomogeneous ($\cc\neq0$) DWs, which we in turn continued in the parameter $s$ towards zero for different fixed values of $\cc$ until the continuation process fails to converge. Based on this, there is numerical evidence that DWs with opposite speed sign (counter-propagating fronts) can only exist simultaneously for $s=0$ (standing fronts). We took a polynomial fit on these points as an approximation of the existence boundary (cf.\ blue curve in Figure~\ref{fig:existencecont}a). The continuation process towards the boundary is indicated by red arrows in Figure~\ref{fig:existencecont}a for positive $\cc$. Additionally, the corresponding results in the parameter space $\Omega$ and $\cc$ is presented in~\ref{fig:existencecont}b. 

\begin{figure}[h]
    \centering
    \begin{subfigure}[b]{0.25\textwidth}
        \includegraphics[width=\textwidth]{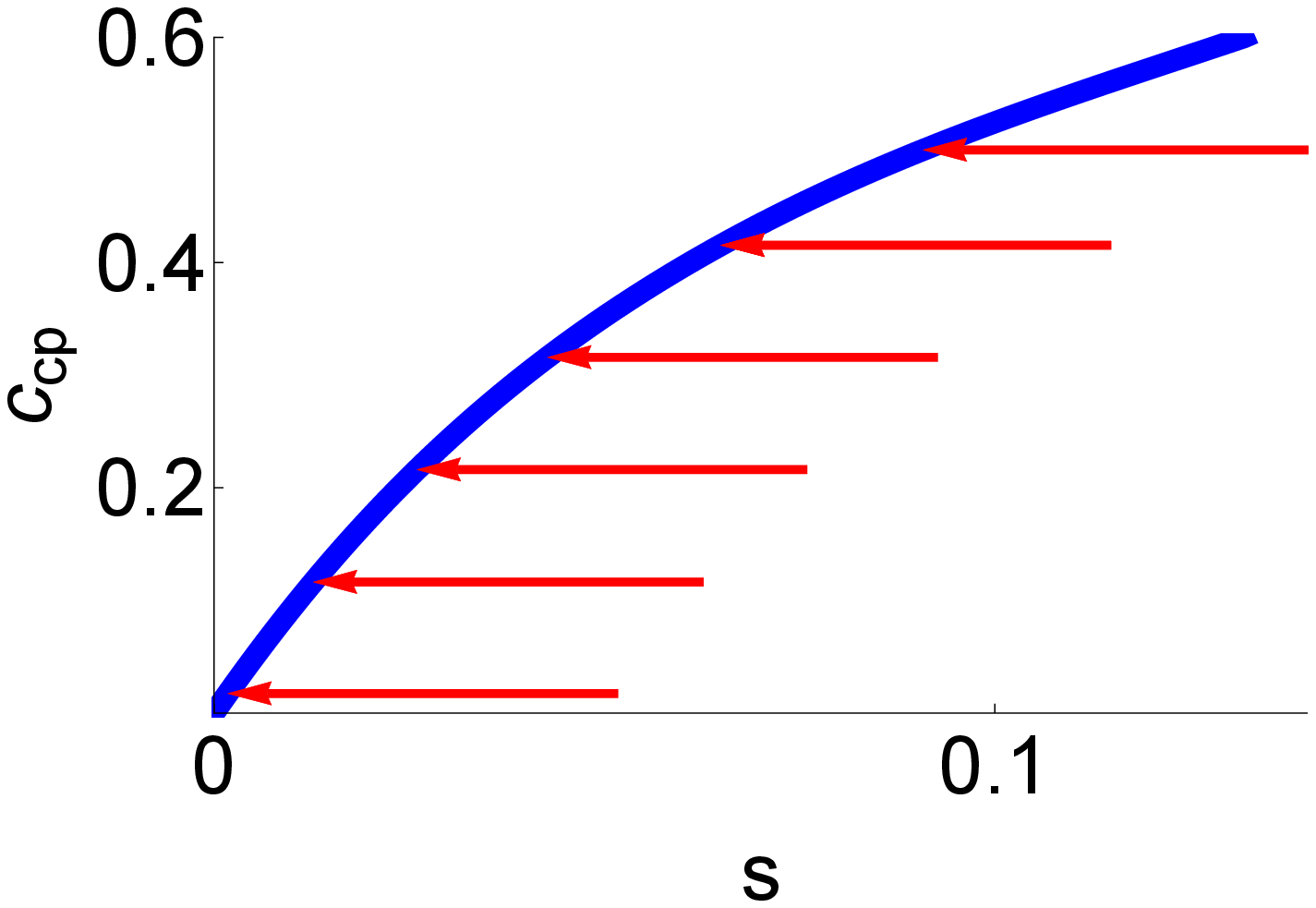}
        \caption{}
    \end{subfigure}
    \hspace{0.25\textwidth}
    \begin{subfigure}[b]{0.25\textwidth}
        \includegraphics[width=\textwidth]{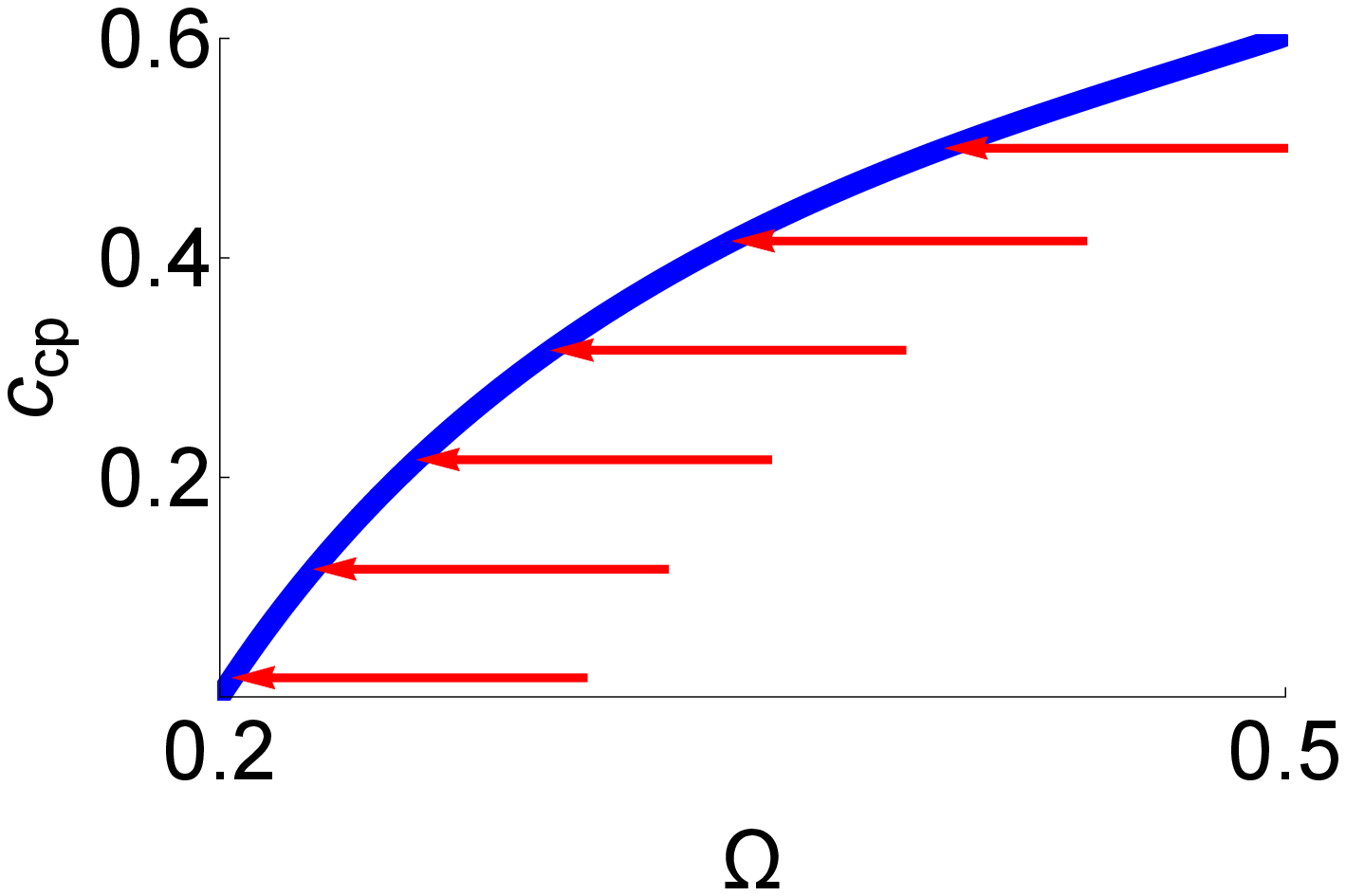}
        \caption{}
    \end{subfigure}
    \caption{\small Continuation results in $s$ and $\Omega$ for fixed $\cc$ in the codim-2 parameter regime. Further parameters are $\alpha=0.5, \beta=0.1, \mu=-1$, and $h$ free. Blue solid line represents interpolated termination boundary for $s$ in (a) and $\Omega$ in (b), respectively. Red arrows indicate continuation approach towards boundary.}
    \label{fig:existencecont}
\end{figure}

As a last point, we briefly describe the numerical method for time-integration near DWs, including freezing of speed and frequency (cf.\ Figure~\ref{fig:freezselection}). All calculations were done with the (free) software package \texttt{pde2path} which is based on a Finite Element Method (FEM), cf.~\cite{dohnal2014pde2path} and the references therein. Time-integration in \texttt{pde2path} with the so-called `freezing' method is discussed in~\cite{rademacher2017symmetries}. In addition to the phase condition for the speed we added a phase condition for the rotation and time-integrated via a semi-implicit Euler scheme.

\section{Discussion and Outlook}\label{sec:discussion}
We have presented results pertaining the existence of different types of domain walls for the~\ref{LLG} as well as~\ref{LLGS} equation. Our main focus has been on a nonzero polarisation parameter $\cc\neq 0$ for any value of the applied field, including the high-field case, and thus for any domain wall speed. These results extend what is known in particular for inhomogeneously structured DWs, and we have discovered an apparently new type of DWs with certain oscillatory tails, referred to as non-flat here. 

\medskip
In detail, we have provided a classification of DWs based on co-dimension properties in a reduced (spatial) coherent structure ODE, which relates to stability and selection properties that we review next. First, we have proven the existence of inhomogeneous flat DWs in case $\cc=0$ as well as $\cc\neq 0$ for an applied field above a certain threshold, which is mainly material depending.  To our knowledge, the only previous existence result for $\cc\neq0$ with 'large' applied fields concerns less relevant non-localized DWs~\cite{Melcher2017}. Here the existence problem does not select speed and frequency.

Second, we have discussed the so-called center case, which is characterized by non-hyperbolic equilibria in the underlying coherent structure ODE. In this case, we have shown the existence of inhomogeneous DWs including the leading order selection mechanism. These solutions are non-flat in case $\cc=0$ and generically also non-flat for $\cc$ away from zero, which was substantiated by numerical results. The fundamental observation has been the existence of a Hamiltonian function in a certain parameter regime in the corresponding coherent structure ODE.

Third, we have proven the existence of inhomogeneous DWs in the so-called codim-2 regime, which is a range of values for the applied field in which the speed $s$ is between zero and the center case speed. In this regime, each solution in case $\cc\neq 0$ is uniquely determined by its speed as well as frequency. Here we have also presented the leading order selection function in the coherent structure ODE variables $p$ and $q$, which depends on the speed $s$, the frequency $\Omega$, and is independent of $\cc$ for standing fronts.

\medskip We believe that these results are not only interesting and relevant from a theoretical and mathematical viewpoint, but  also from an application viewpoint. They could help to better understand the interfaces between different magnetic domains in nanostructures, e.g.\ in the development of racetrack memories, which are a promising prospective high density storage unit that utilize a series of DWs by shifting at high speed along magnetic nanowires through nanosecond current pulses.

\medskip
In order to illustrate and corroborate these theoretical results, we have presented numerical computations for a variety of values for the applied field in \S~\ref{sec:continuation}. On the one hand, the examples in essence show that large applied fields lead to more complex profiles of the DWs in case $\cc\neq0$. On the other hand, while in the center case the DWs projected on the sphere appear similar to those for small applied fields, these solutions approach the poles in a qualitatively different `non-flat' manner -- as predicted by our analysis. Moreover, we compared the numerical and analytical results of the selection mechanism in the center case, showing that the analytical leading order approximation predicts the effect of small perturbations in the parameters. Notably, for applied fields above a certain threshold, where the existence analysis does not provide a selection of speed and frequency, numerically the DWs selected in the PDE dynamics are in the center case, both for $\cc=0$ as well as $\cc\neq0$. Hence, it might be possible to detect these solutions in a high-field regime in real materials.

\medskip
One question concerning existence beyond our analysis is whether inhomogeneous (flat or non-flat) solutions exist for any value of $\cc\in(-1,1)$, and whether this class could be utilized in applications.

\medskip
A natural step towards the understanding of domain wall motion in nanowires beyond the question of existence concerns the dynamic stability. For inhomogeneous solutions there appears to be no rigorous result in this direction. In particular for larger applied fields, stability results would be an essential step towards understanding the selection mechanism of solutions in terms of speed and frequency; our first numerical investigations show that solutions in the center parameter regime are selected, i.e., inhomogeneous non-flat DWs.

\medskip Moreover, preliminary analytic results, for $\cc=0$ as well as $\cc\neq 0$, show that selection mechanism is mainly determined by the value of the applied field, where in the bi-stable case ($\pm\3$ linearly stable) homogeneous DWs are selected, and in the mono-stable case inhomogeneous non-flat DWs are selected, which will be studied in detail in an upcoming work.

\bibliographystyle{unsrt}
\bibliography{references}

\section{Appendix}
\subsection{Proof of Theorem~\ref{theo:center}}\label{app:center case proof}
We use the notation $$u=u(\xi; \eta, \alpha, \beta, \mu)=\left(\theta(\xi; \eta, \alpha, \beta, \mu), p(\xi; \eta, \alpha, \beta, \mu), q(\xi; \eta, \alpha, \beta, \mu)\right)\tran$$ and bifurcation parameters $\eta=\left(\cc, s, h\right)\tran$, where $s_0$ and $h_0=h^\ast$ are defined below (see \S~\ref{sec:inhomogeneous DW} for details). The starting point for our perturbation analysis are the unperturbed parameters and explicit heteroclinic solution in the center case~\eqref{eq:center case}, where the frequency is $\Omega_0=s_0^2/2+\beta/\alpha$. These are given by $$\eta_0:=\begin{pmatrix}  c_{\textnormal{cp}_0}\\s_0\\h_0\end{pmatrix}:=\begin{pmatrix} 0\\\frac{2\smu}{\alpha}\\\frac{\beta}{\alpha}-2\mu-\frac{2\mu}{\alpha^2} \end{pmatrix}$$ as well as $$u_0=u_0(\xi;\eta_0,\alpha,\beta,\mu):=\begin{pmatrix} \theta_0(\xi;\eta_0,\alpha,\beta,\mu)\\p_0(\xi;\eta_0,\alpha,\beta,\mu)\\q_0(\xi;\eta_0,\alpha,\beta,\mu) \end{pmatrix}:=\begin{pmatrix}2\arctan\left( \exp(\smu\;\xi) \right)\\\smu \\ 0\end{pmatrix}.$$ Unless stated otherwise, we suppress the explicit dependence of $u$ on $\alpha, \beta$, and $\mu$ in the following discussion. Let us write $Z^\pi:=Z^\pi_-$ with the notation from Remark~\ref{rem:equilibria} so that the unperturbed right asymptotic state is given by $$Z^\pi(\eta_0)=\left(\pi,\frac{\alpha s_0}{2},\frac{s_0}{2}-\sqrt{-\frac{\mu}{\alpha^2}}\right)\tran=\left(\pi,\smu,0\right)\tran $$ and its derivative with respect to $\eta$ is given by
$$Z^\pi_\eta(\eta_0)=\begin{pmatrix} 0&0&0\\0&\frac{\alpha}{2}&0\\ \frac{\beta}{2\smu}&\frac{2+\alpha^2}{2}&-\frac{\alpha}{2\smu} \end{pmatrix}.$$
We write system \eqref{eq:coherent2} for brevity as
\begin{equation}\label{eq:usystem}u'=f(u;\eta)\,,
\end{equation}
so $f(u;\eta)$ denotes the right side of\ \eqref{eq:coherent2}. The linearization w.r.t.\ $\eta$ in the unperturbed heteroclinic connection $u_0$, given by~\eqref{eq:atan}, is the non-autonomous linear equation
\begin{equation}\label{eq:linearizazion}
u_\eta' = f_u(u_0;\eta_0)u_\eta + f_\eta(u_0;\eta_0) \eta,
\end{equation}
where $u_\eta=\left(\theta_\eta,p_\eta,q_\eta\right)\tran$. Its homogeneous part is
\begin{equation}\label{eq:linearization homogeneous}
\begin{array}{rl}
\theta'_\eta &= \smu\cos(\theta_0)\theta_\eta + p_\eta\sin(\theta_0)\\
p'_\eta &= -\left( \alpha s_0 + 2\smu \cos (\theta_0) \right) p_\eta + s_0q_\eta\\
q'_\eta &= -s_0p_\eta -\left(\alpha s_0 + 2\smu \cos(\theta_0)\right)q_\eta
\end{array},
\end{equation}
with $\theta_0(\xi)=2\arctan\left(\exp(\smu \,\xi)\right)$ due to~\eqref{eq:atan}. We next solve~\eqref{eq:linearization homogeneous} and determine its fundamental solution matrix.

\medskip
The first obvious vector-solution of it is $U_1 = u_0' = (\theta'_0,0,0)$ since the second and the third equation of~\eqref{eq:linearization homogeneous} do not depend
on $\theta_\eta$. The other solutions can be obtained from $U_1$ and the result of Lemma~\ref{lem:explicit}. Changing to polar coordinates $$p_\eta = r \cos\varphi\, , \quad q_\eta= r \sin\varphi\,,
$$ the equations for $p_\eta$ and $q_\eta$ become
$$
\begin{array}{rl}
r' &=  -\left(\alpha s_0 +2\smu \cos (\theta_0) \right) r\\
\varphi' &= -s_0\,,
\end{array}
$$
whose general solution can be written as
\begin{equation*}
\begin{array}{rl}
p_\eta &= r_0 r(\xi) \cos(-s_0 \xi + \varphi_0)\\
q_\eta &= r_0 r(\xi) \sin(-s_0 \xi + \varphi_0)
\end{array}
\end{equation*}
where
$$r(\xi) = \exp\left(-\alpha s_0 \xi - 2 \smu \int\limits_\xi \cos(\theta_0(\tau)) d\tau \right) = \left(1 + \rme^{2 \smu \xi}\right)^2 \, \rme^{(-2 \smu -
\alpha s_0)\xi},$$ 
and $r_0, \varphi_0$ are arbitrary integration constants corresponding to suitable initial conditions. Note that $\lim\limits_{\xi \to \pm \infty} r(\xi) = \infty$ for $0\le s_0<2\smu/\alpha$.

Next, the values of the integration constants have to be selected in order for the second and the third vector-solutions
\begin{equation}\label{eq:Usol}
U_2 = \begin{pmatrix} \theta^1_1 \\ r_1 r(\xi) \cos(-s_0 \xi + \varphi_1) \\ 
r_1 r(\xi) \sin(-s_0 \xi + \varphi_1) \end{pmatrix}\,, \quad 
U_3 = \begin{pmatrix} \theta^2_1 \\ r_2 r(\xi) \cos(-s_0 \xi + \varphi_2) \\
r_2 r(\xi) \sin(-s_0 \xi + \varphi_2) \end{pmatrix}
\end{equation}
to be linearly independent. Here $\theta^1_1, \theta^2_1$ are not relevant for what follows. The determinant of the fundamental matrix reads
$$
\det \Phi(\xi) = \det \left( U_1(\xi), U_2(\xi), U_3(\xi) \right) = r_1 r_2 r^2(\xi)\theta'_0(\xi) \sin(\varphi_2 - \varphi_1),
$$
which is non-zero for $r_1 = r_2 = 1$, $\varphi_1 = 0$ and $\varphi_2 = \pi / 2$, i.e.\ $\det \Phi(\xi) = r^2(\xi)\theta'_0(\xi)$. Together, we get the fundamental solution matrix of the homogeneous part as

\begin{equation}\label{eq:fundMatrix}
    \Phi(\xi) = \begin{pmatrix} \theta'_0(\xi) & \theta^1_1(\xi) & \theta^2_1(\xi) \\ 
    0 & r(\xi) \cos(-s_0 \xi) & -r(\xi) \sin(-s_0 \xi) \\ 
   0 & r(\xi) \sin(-s_0 \xi) &  r(\xi) \cos(-s_0 \xi) \end{pmatrix}.
\end{equation}

The derivative of~\eqref{eq:usystem} with respect to $\eta$ is given by~\eqref{eq:linearizazion} and from the variation of constants formula we get for some $\xi_0$ that 
$$u_\eta(\xi)=\Phi_{\xi,\xi_0} u_\eta(\xi_0)+\int\limits_{\xi_0}^{\xi}{\Phi_{\xi,\tau}f_\eta(u_0(\tau);\eta_0)d\tau}\,,$$ 
where $\Phi_{\xi,\tau}=\Phi(\xi) \cdot \Phi^{-1}(\tau)$ is the evolution operator.
Using~\eqref{eq:fundMatrix} we find
\begin{equation}\label{eq:fundamental solution}
    \Phi_{\xi,\tau}(\xi,\tau;\eta)=\begin{pmatrix}
    \Theta_1 & \Theta_2 & \Theta_3 \\ 0 & \frac{r(\xi)}{r(\tau)}\cos\left( -s_0(\xi-\tau) \right) & -\frac{r(\xi)}{r(\tau)}\sin \left(-s_0(\xi-\tau)\right)\\ 0 & \frac{r(\xi)}{r(\tau)}\sin\left( -s_0(\xi-\tau) \right) & \frac{r(\xi)}{r(\tau)}\cos\left(-s_0(\xi-\tau)\right)
    \end{pmatrix},
\end{equation}
where the explicit forms of the functions $ \Theta_{1,2,3}(\xi)$ are not relevant for the remainder of this proof. Since $u_\eta(\xi)$ tends to $\partial_\eta Z^0_-$ for $\xi\to-\infty$ the hyperbolicity of $Z^0_-$ (more precisely the resulting exponential dichotomy) implies $\Phi_{\xi,\xi_0} u_\eta(\xi_0)\to 0$ as $\xi_0\to-\infty$ and so
\begin{equation}\label{eq:u variation}
   u_\eta(\xi) = \int\limits_{-\infty}^{\xi}{\Phi_{\xi,\tau}f_\eta(u(\tau;\eta_0);\eta_0)d\tau}\,.
\end{equation}

Regarding the limiting behavior as $\xi\to\infty$, recall that Corollary~\ref{cor:transition} states that the right asymptotic limit of the perturbed heteroclinic orbit is either the perturbed equilibrium $Z^\pi(\eta)$ or a periodic orbit around it in the blow-up chart at $\theta=\pi$. The integral~\eqref{eq:u variation} distinguishes these case in the sense that either it has a limit as $\xi\to +\infty$ so the heteroclinic orbit connects the two equilibria, or it does not and the heteroclinic orbit connects to a periodic solution.

\medskip
We next determine $u_\eta(\xi)$ componentwise $$u_\eta(\xi)=v\coloneqq\begin{pmatrix} v_{11}&v_{12}&v_{13}\\v_{21}&v_{22}&v_{23}\\v_{31}&v_{32}&v_{33} \end{pmatrix},$$
where $v_{ij}$ are the components of~\eqref{eq:u variation} and index $i=1,2,3$ relates to $\theta, p, q$ as well as $j=1,2,3$ to $\cc, s, h$.

Towards this, we compute
$$f_\eta(u_0(\tau),\eta_0)=\begin{pmatrix} 0&0&0\\-\beta/\alpha&-\frac{\smu}{\alpha}(2+\alpha^2)&1\\ \frac{2\beta}{1+\rme^{2\smu \;\tau}} &\smu& 0 \end{pmatrix},
$$
and together with~\eqref{eq:u variation} and~\eqref{eq:fundamental solution} we obtain

\begin{table}[h]
    \centering
    \begin{tabular}{lll}
    $v_{21}=-\frac{\beta}{\alpha}I_C-2\beta J_S\,,$ &$v_{22}=-\frac{\smu}{\alpha}(2+\alpha^2)I_C-\smu I_S\,, $ & $v_{23}=I_C\,,$\\ & & \\
    $v_{31}=-\frac{\beta}{\alpha}I_S+2\beta J_C\,,$ & $v_{32}=-\frac{\smu}{\alpha}(2+\alpha^2)I_S+\smu I_C$ & $v_{33}=I_S\,,$
    \end{tabular}
\end{table}

where 
$$I_C=I_C(\xi)\coloneqq \int\limits_{-\infty}^{\xi} \frac{\left( 1+\exp\left(-2\smu \;\xi\right) \right)^2}{\left( 1+\exp\left(-2\smu \;\tau\right) \right)^2}\cos\left(-s_0(\xi-\tau)\right)d\tau\,,$$
$$I_S=I_S(\xi)\coloneqq \int\limits_{-\infty}^{\xi} \frac{\left( 1+\exp\left(-2\smu \;\xi\right) \right)^2}{\left( 1+\exp\left(-2\smu \;\tau\right) \right)^2}\sin\left(-s_0(\xi-\tau)\right)d\tau\,,$$
$$J_C=J_C(\xi)\coloneqq \int\limits_{-\infty}^{\xi} \frac{\exp\left(-2\smu\; \tau\right)\left( 1+\exp\left(-2\smu \;\xi\right) \right)^2}{\left( 1+\exp\left(-2\smu \;\tau\right) \right)^3}\cos\left(-s_0(\xi-\tau)\right)d\tau\,,$$
$$J_S=J_S(\xi)\coloneqq \int\limits_{-\infty}^{\xi} \frac{\exp\left(-2\smu\; \tau\right)\left( 1+\exp\left(-2\smu \;\xi\right) \right)^2}{\left( 1+\exp\left(-2\smu \;\tau\right) \right)^3}\sin\left(-s_0(\xi-\tau)\right)d\tau\,.$$
Note that we do not provide explicit formulas for $v_{11}$, $v_{12}$ and $v_{13}$, because they are not needed for further computations. This is the reason why we neglected the explicit expressions of $\Theta_1\,,\Theta_2$, and $\Theta_3$ before. We now introduce the following complex-valued integrals for further computations: $$I(\xi)\coloneqq I_C(\xi)+\rmi  I_S(\xi)=\int\limits_{-\infty}^{\xi}\frac{\left( 1+\rme^{-2\smu \xi}\right)^2}{\left(1+\rme^{-2\smu \tau}\right)^2}\,\exp\left(-\rmi \frac{2\smu}{\alpha}(\xi-\tau)\right) d\tau\,,$$
for $I_C$ and $I_S$ as well as $$J(\xi)\coloneqq J_C(\xi)+\rmi J_S(\xi)=\int\limits_{-\infty}^{\xi}{\frac{\rme^{-2\smu\tau}\left( 1+\rme^{-2\smu \xi} \right)^2}{\left( 1+\rme^{-2\smu \tau}\right)^2}\exp\left(-\rmi \frac{2\smu}{\alpha}(\xi-\tau) \right)d\tau}\,$$
for $J_C$ and $J_S$.
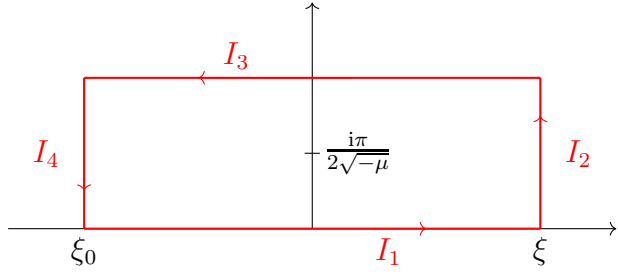
\begin{figure}[t]
    \centering
    \begin{tikzpicture}
        \draw[->, black, solid] (-4,0) -- (4,0);
        \draw[->, black, solid] (0,0) -- (0,3);
        \draw[red, solid, thick] (-3,0) -- (3,0);
        \draw[red, solid, thick] (-3,2) -- (3,2);
        \draw[red, solid, thick] (3,0) -- (3,2);
        \draw[red, solid, thick] (-3,0) -- (-3,2);
        \draw[->, red] (-2,0) -- (1.5,0);
        \draw[<-, red] (-1.5,2) -- (0.5,2);
        \draw[->, red] (3,0) -- (3,1.5);
        \draw[<-, red] (-3,0.5) -- (-3,2);
        \node at (3,-0.3) {$\xi$};
        \node at (-3,-0.3) {$\xi_0$};
        \node at (0.6,1) {$\frac{\rmi \pi}{2\smu}$};
        \node[red] at (1,-0.3) {$I_1$};
        \node[red] at (3.5,1) {$I_2$};
        \node[red] at (-1,2.3) {$I_3$};
        \node[red] at (-3.5,1) {$I_4$};
        \draw[solid] (-0.1,1) -- (0.1,1);
    \end{tikzpicture}
    \caption{\small Contour $\mathbf{C}$ in $\C$ for the integrals $I$ and $J$.}
    \label{fig:contour integral}
\end{figure}
We extend the above integrals to the complex plane and integrate along the counter-clockwise oriented rectangular contour $\mathbf{C}$ as illustrated in Figure\ \ref{fig:contour integral}, and let $\xi_0\to -\infty$. We will provide the details of the computation of $I$ only, as $J$ can be calculated in a fully analogous way.

\medskip
The complex integrand of $I$ is
$$g(z;\xi)\coloneqq \frac{\left( 1+\rme^{-2\smu \xi} \right)^2}{\left( 1+\rme^{-2\smu z} \right)^2} \exp\left(-\rmi \frac{2\smu}{\alpha}(\xi-z)\right)\,,$$
with singularities in $\C$ at the points $\frac{\rmi(\pi + 2 k \pi)}{2\smu}$, $k\in\Z$, one of which lies in the interior of $\mathbf{C}$, namely $z_0 := \frac{\rmi\pi}{2\smu}$. 
The contour integral $I$ can now be written via the residue theorem as $$I_1(\xi) + I_2(\xi) + I_3(\xi) + I_4(\xi) = 2\pi\rmi \sum\limits_{\textnormal{int} \mathbf{C}}  \textnormal{Res}\, g\left(z;\xi\right)\,,$$ where $I_1,\ldots ,I_4$ are given by
\begin{enumerate}
    \item[$I_1:$] $z=x$, \begin{align*}I_1(\xi_0, \xi) &= \left(1+\rme^{-2\smu \xi}\right)^2\exp\left(-\rmi \frac{2\smu}{\alpha} \xi\right)\int\limits_{\xi_0}^{\xi}{\frac{\exp\left(\rmi \frac{2\smu}{\alpha} x\right)}{\left( 1+\rme^{-2\smu x} \right)^2}dx}\\
    I(\xi) = \lim_{\xi_0 \to -\infty} I_1(\xi_0, \xi) &=\left(1+\rme^{-2\smu\xi}\right)^2\exp\left(-\rmi \frac{2\smu}{\alpha} \xi\right)\int\limits_{-\infty}^{\xi}{\frac{\exp\left(\rmi \frac{2\smu}{\alpha} x\right)}{\left( 1+\rme^{-2\smu x} \right)^2}dx}, \end{align*}
    
    \item[$I_2:$] $z=\xi+\rmi y$, $$I_2(\xi) = \left( 1+\rme^{-2\smu \xi} \right)^2\int\limits_{0}^{\frac{\pi}{\smu}}{\frac{\rmi \exp\left(-\frac{2\smu}{\alpha}y\right)}{\left( 1+\rme^{-2\smu( \xi +\rmi y)} \right)^2}dy},$$
    
    \item[$I_3:$] $z=x+\frac{\pi}{\smu}\rmi$, \begin{align*}I_3(\xi_0, \xi) &= \left( 1+\rme^{-2\smu \xi} \right)^2\exp\left(-\rmi \frac{2\smu}{\alpha} \xi\right) \rme^{-\frac{2 \pi}{\alpha}} \int\limits_{\xi}^{\xi_0}{\frac{\exp\left(\rmi \frac{2\smu}{\alpha} x\right)}{\left( 1+\rme^{-2\smu \xi} \right)^2}dx}\\
    &= -\rme^{-\frac{2 \pi}{\alpha}}I_1(\xi_0, \xi)\,,\end{align*}
    
    \item[$I_4:$] $z=\xi_0+\rmi y$, \begin{align*} I_4(\xi_0, \xi) &= \left( 1+\rme^{-2\smu \xi} \right)^2\exp\left(\rmi \frac{2\smu}{\alpha} (\xi_0-\xi)\right) \int\limits_{\frac{\pi}{\smu}}^{0}{\frac{\rmi \exp\left(-\frac{2\smu}{\alpha}y\right)}{\left( 1+\rme^{-2\smu(\xi_0+iy)} \right)^2}dy,}\\ \lim_{\xi_0\to -\infty}  I_4(\xi_0, \xi) &= 0\,.\end{align*}
\end{enumerate}

Utilizing the Laurent series of $g$ we obtain
$$
\textnormal{Res} \, g\left( z;\xi \right)\vert_{z = z_0} = \frac{\alpha+\rmi}{2\alpha\smu}\rme^{-\frac{\pi}{\alpha}} \left(1 + \rme^{-2\smu \xi} \right)^2\exp\left(-\rmi \frac{2\smu}{\alpha} \xi\right)\,,
$$ 
which leads to
$$I(\xi) = \left(1-\rme^{-\frac{2 \pi}{\alpha}} \right)^{-1} \left(\frac{\pi \rmi(\alpha+\rmi)}{\alpha\smu}\rme^{-\frac{\pi}{\alpha}}\left(1+\rme^{-2\smu \xi} \right)^2\exp\left(-\rmi \frac{2\smu}{\alpha} \xi\right) - I_2(\xi) \right).$$
Now we can write $$I_C(\xi)=\Re\, I(\xi)$$
$$=\frac{\pi\left( 1+\rme^{-2\smu \xi} \right)^2}{\alpha \smu \left( \rme^{\frac{\pi}{\alpha}}-\rme^{-\frac{\pi}{\alpha}} \right)}\left[-\cos\left( -\frac{2\smu}{\alpha}\xi \right) - \alpha \sin\left( -\frac{2\smu}{\alpha}\xi\right) \right] -\frac{1}{1-\rme^{-\frac{2\pi}{\alpha}}}I_2^r(\xi)$$ as well as 
$$I_S(\xi)=\Im\, I(\xi)$$
$$=\frac{\pi\left( 1+\rme^{-2\smu \xi} \right)^2}{\alpha \smu \left( \rme^{\frac{\pi}{\alpha}}-\rme^{-\frac{\pi}{\alpha}} \right)}\left[\alpha\cos\left( -\frac{2\smu}{\alpha}\xi \right)- \sin\left( -\frac{2\smu}{\alpha}\xi\right) \right] -\frac{1}{1-\rme^{-\frac{2\pi}{\alpha}}}I_2^i(\xi)\,,$$ where $I_2^r(\xi)$ and $I_2^i(\xi)$ are the real and imaginary part of $I_2(\xi)$, respectively.

\medskip
Studying the integral $J$ in a similar fashion, we obtain
$$J_C(\xi)=\Re\, J(\xi)$$ $$=\frac{\pi\left( 1+\rme^{-2\smu\xi} \right)^2}{2\alpha^2\smu \left(\rme^{\frac{\pi}{\alpha}}-\rme^{-\frac{\pi}{\alpha}} \right)} \left[ \alpha\cos\left( -\frac{2\smu}{\alpha}\xi \right) -\sin\left( -\frac{2\smu}{\alpha}\xi \right)\right]-\frac{1}{\left( 1-\rme^{-\frac{2\pi}{\alpha}} \right)}J_2^r(\xi)$$ as well as $$J_S(\xi)=\Im\, J(\xi)$$ $$=\frac{\pi\left( 1+\rme^{-2\smu\xi} \right)^2}{2\alpha^2\smu \left(\rme^{\frac{\pi}{\alpha}}-\rme^{-\frac{\pi}{\alpha}} \right)} \left[\cos\left( -\frac{2\smu}{\alpha}\xi \right) +\alpha\sin\left( -\frac{2\smu}{\alpha}\xi \right)\right]-\frac{1}{\left( 1-\rme^{-\frac{2\pi}{\alpha}} \right)}J_2^i(\xi)\,,$$
where also here $J_2^r(\xi)$ and $J_2^i(\xi)$ are the real and imaginary part of $J_2(\xi)$. Direct computations show also that $$\lim_{\xi\to +\infty}I_2^i(\xi)=\frac{\alpha}{2\smu}\left(1-\exp\left( -\frac{2\pi}{\alpha} \right) \right)$$ and $$\lim\limits_{\xi\to +\infty}I_2^r(\xi)=\lim\limits_{\xi\to +\infty}J_2^r(\xi)=\lim\limits_{\xi\to +\infty}J_2^i(\xi)=0\,.$$
Summing up, the second and third component of $u_\eta(\xi)\cdot \eta$ for sufficiently large $\xi$ are  $$\begin{pmatrix} \frac{\alpha}{2}s\\ \frac{\beta}{2\smu}\cc+\frac{2+\alpha^2}{2}s-\frac{\alpha}{2\smu}h\end{pmatrix}+$$ $$\frac{\pi}{\rho}\begin{pmatrix}
\left( -\frac{1}{\alpha \smu}h+\frac{2}{\alpha^2}s \right)\cos\left( -\frac{2\smu}{\alpha}\xi\right)+\left( -\frac{1}{\smu}h+\frac{(3+\alpha^2)}{\alpha}s \right) \sin\left(-\frac{2\smu}{\alpha}\xi\right)\\
\left( \frac{1}{\smu}h-\frac{(3+\alpha^2)}{\alpha}s \right)\cos\left(-\frac{2\smu}{\alpha}\xi\right)+\left( -\frac{1}{\alpha\smu}h+\frac{2}{\alpha^2}s \right)\sin\left(-\frac{2\smu}{\alpha}\xi\right)\end{pmatrix} +$$ $$+ \calO(\rme^{-2\smu \xi}),$$
where $\rho\coloneqq \exp(\pi/\alpha)-\exp(-\pi/\alpha)$. One readily verifies that the oscillatory part in the expression above vanishes if and only if $s$ and $h$ are zero and thus we infer that the heteroclinic connection cannot be between equilibria to first order in the parameters.

In order to detect cancellations of these oscillatory parts for higher orders of $s$ and $h$, we next consider the behavior of the quantity~\eqref{eq:hamiltonian fct} with respect to parameter perturbations. With slight abuse of notation, for $u=(\theta,p,q)\tran$ we write $H(u;\eta):=H(p,q)$ evaluated at parameters $\eta$, and other parameters at some fixed value, and we always consider the heteroclinic solutions from Corollary~\ref{cor:transition}.

\medskip
Our strategy in the following steps is as follows:\ we utilize the quantity $H$ because $\lim_{\xi\to+\infty} H$ always exists along these solutions. In order to distinguish whether this limit is an equilibrium or a periodic orbit, we consider 

$$\tH(u;\eta)\coloneqq H(u;\eta)-H(Z^\pi;\eta),$$ i.e., the difference of the $H$-values of the (parameter dependent) equilibrium $Z^\pi$ and the limit of $u$ as $\xi$ tends to infinity. Expanding $\tH$ in the limit $\xi\to\infty$ with respect to the parameter $\eta$ yields conditions for periodic asymptotics. In the following, subindices of $H$ denote partial derivatives, e.g. $H_u=\partial_u H$.

\medskip
Clearly, $H(u_0;\eta_0)=H(Z^\pi(\eta_0);\eta_0)$, thus $\tH_0=0$ and, since equilibria are critical points of $H$, we have $\tH_u(u_0;\eta_0)=\tH_\eta(u_0;\eta_0)=(0,0,0)\tran$. The second derivative is given by
\begin{equation}\label{eq:tH second derivative}\frac{d^2}{d\eta^2}\tH=\langle u_\eta,\tH_{uu}u_\eta \rangle+\langle u_\eta,\tH_{u\eta} \rangle+\langle \tH_{\eta u},u_\eta \rangle\,, \end{equation}
since $\tH_{\eta\eta}$ is the zero matrix, $\tH_u$ the zero vector, and $\tH_{u\eta}=\tH_{\eta u}\tran$. Thus
\begin{equation}\label{eq:tH expansion}
    \begin{split}
        \tH(u_0+u_\eta \eta;\eta)&=\frac{1}{2}\left(u_\eta \eta \right)\tran H_{uu}(u_0;\eta_0)(u_\eta \eta) + (u_\eta \eta)\tran H_{u\eta}(u_0;\eta_0)\eta\\ &\quad -\frac{1}{2}\left(Z^\pi_\eta(\eta_0)\eta\right)\tran H_{uu}\left( Z^\pi(\eta_0);\eta_0 \right) \left( Z^\pi_\eta(\eta_0)\eta \right)\\&\quad-\left( Z^\pi_\eta(\eta_0)\eta \right)\tran H_{u\eta} \left( Z^\pi(\eta_0);\eta_0 \right) \eta + \calO\left( \| \eta\|^3\right)\,.
\end{split}
\end{equation}
With the derivatives $H_{uu}, H_{u\eta}$ in~\eqref{eq:tH expansion} given by
\begin{align*}
H_{uu}(u;\eta)&=\begin{pmatrix}
0&0&0\\0&\frac{2}{q-s/2}&-\frac{2p-\alpha s}{(q-s/2)^2}\\ 0&-\frac{2p-\alpha s}{(q-s/2)^2}&2\frac{p^2-\alpha s p+h-\beta^-/\alpha+\mu-s^2/4}{(q-s/2)^3}
\end{pmatrix},\\
H_{u\eta}(u;\eta)&=\begin{pmatrix}
0 & 0 & 0\\0& \frac{p-\alpha q}{(q-s/2)^2}& 0 \\\frac{\beta/\alpha}{(1-\cc)^2(q-s/2)^2}& -\frac{p^2-\alpha p q-\frac{\alpha s}{2}p-\frac{s}{2}q+h-\beta^-/\alpha +\mu}{(q-s/2)^3} & -\frac{1}{(q-s/2)^2}
\end{pmatrix},
\end{align*}
for the right hand side of\ \eqref{eq:tH expansion} in the limit $\xi\to +\infty$ we obtain 
\begin{align*}
&\frac{1}{2}\left(u_\eta \eta \right)\tran H_{uu}(u_0;\eta_0)\left(u_\eta \eta \right)+\left(u_\eta \eta \right)\tran H_{u\eta}(u_0;\eta_0)\eta=-\frac{\alpha \beta^2}{4\mu \smu}\cc^2 \\
&~+ \frac{(4+5\alpha^2+\alpha^4)(\alpha^2\rho^2-4(1+\alpha^2)\pi^2)}{4\alpha^3\rho^2\smu}(s-s_0)^2-\frac{\alpha^4\rho^2-4(1+\alpha^2)\pi^2}{4\alpha\rho^2\mu\smu}(h-h_0)^2\\& -\frac{\alpha \beta(2+\alpha^2)}{2\mu}\cc (s-s_0) + \frac{\alpha^2\beta}{2\mu\smu}\cc (h-h_0)\\&~ + \frac{(2+\alpha^2)(\alpha^4\rho^2-4(1+\alpha^2)\pi^2)}{2\alpha^2\rho^2\mu}(s-s_0)(h-h_0)\,,
\end{align*}
as well as
\begin{align*}
&\frac{1}{2}\left(Z^\pi_\eta(\eta_0) \eta \right)\tran H_{uu}(Z^\pi(\eta_0);\eta_0)\left(Z^\pi_\eta(\eta_0) \eta \right)+\left(Z^\pi_\eta(\eta_0) \eta \right)\tran H_{u\eta}(Z^\pi(\eta_0);\eta_0)\eta=\\
 &~-\frac{\alpha \beta^2}{4\mu\smu}\cc^2+\frac{\alpha(4+5\alpha^2+\alpha^4)}{4\smu}(s-s_0)^2-\frac{\alpha^3}{4\mu\smu}(h-h_0)^2\\&\quad-\frac{\alpha\beta(2+\alpha^2)}{2\mu}\cc (s-s_0)+\frac{\alpha^2\beta}{2\mu\smu}\cc (h-h_0) + \frac{\alpha^2(2+\alpha^2)}{2\mu}(s-s_0)(h-h_0).
\end{align*}
Therefore, the expansion in the limit $\xi\to +\infty$ is independent of $\cc$ and reads
\begin{equation}\label{eq:second order approx}
\begin{split}
\lim\limits_{\xi\to\infty}\tH(u_0+u_\eta \eta;\eta)&=-\frac{(1+\alpha^2)^2(4+\alpha^2)\pi^2}{\alpha^3 \rho^2\smu}(s-s_0)^2\\
&\quad -\frac{2(1+\alpha^2)(2+\alpha^2)\pi^2}{\alpha^2\rho^2\mu}(s-s_0)(h-h_0)\\&\quad+\frac{(1+\alpha^2)\pi^2}{\alpha\rho^2\mu\smu}(h-h_0)^2+\calO\left(\|\eta-\eta_0\|^3\right)\,.
\end{split}
\end{equation}

Recall $\rho= \exp(\pi/\alpha)-\exp(-\pi/\alpha)$. One readily verifies that the resulting (binary) quadratic form of~\eqref{eq:second order approx} is negative definite for all $\alpha>0$ so the only solution to the leading order problem $$\frac{d^2}{d\eta^2}\tH(u_0;\eta_0)=0$$ is the trivial one $(s,h)=(0,0)$,
and any non-trivial solution satisfies $|s-s_0|^2+|h-h_0|^2=\calO(|\cc|^3)$.

\medskip
In particular, for $\cc=0$ there is a neighborhood of $(s_0,h_0)$ such that the only solution is the trivial one, which is therefore also the case in the~\ref{LLG} equation. In case $\cc\neq 0$, higher orders may lead to a solution with non-zero $s$ and/or $h$, but there is numerical evidence that such solutions do not exist (see \S~\ref{sec:continuation} for details).

\subsection{Proof of Theorem~\ref{theo:codim2}}\label{app:proof}
The idea of the proof is to apply Lyapunov-Schmidt reduction, i.e., to determine a bifurcation equation whose solutions are in one-to-one correspondence with heteroclinic connections between equilibria~\eqref{eq:coherent2} near one of the explicit solutions $u_0$ from \eqref{eq:atan} connecting the equilibria $Z^0_-$ and $Z^\pi_-$. In the present context this is known as Melnikov's method, see for example~\cite{kuznetsov2013elements}.

Recall $u_0$ corresponds to a homogeneous DW for $\cc=0$ with speed $s_0$ and rotation frequency $\Omega_0$ given by~\eqref{eq:initialspeedfrequenzy}. In the present codim-2 parameter regime we will show that the bifurcation equation defines a codimension two bifurcation curve in the three-dimensional parameter space $(\cc, s, \Omega)$, which passes through the point $(0, s_0, \Omega_0)$. The main part of the proof is to show the existence of certain integrals for the considered parameter set. These integrals are almost identical to the ones studied within the proof of Theorem~\ref{theo:center} and we use the same approach. 

\medskip
In this section we denote the parameter vector by $\eta \coloneqq (\cc, s, \Omega)\tran \in \R^3$, with initial value $\eta_0 =  (0,s_0,\Omega_0)\tran$ corresponding to the unperturbed values. The solutions of the perturbed system close to $u_0$ has the form $u(\xi; \eta) = u_0(\xi) + u_\eta(\xi; \eta_0)(\eta-\eta_0) + \calO(\|\eta-\eta_0\|^2)$, where $u_\eta =  (\theta_\eta, p_\eta, q_\eta)\tran = \calO(\|\eta-\eta_0\|)$. 

\medskip
As discussed in Appendix~\ref{app:center case proof}, the linearization~\eqref{eq:linearizazion} of system~\eqref{eq:coherent2} around the unperturbed heteroclinic connection $u_0$ has the fundamental solution matrix $\Phi(\xi)$ as defined in~\eqref{eq:fundMatrix}. In the present codim-2 case with $\dim(W^0_u)=\dim(W^\pi_s)=1$ in $\R^3$, the bifurcation equation $M(\eta)=0$ entails two equations. Here $M(\eta)$ measures the displacement of the manifolds $W^0_u$ and $W^\pi_s$, and we will choose this to be near the point $u_0(0)=\left(\frac{\pi}{2},\smu,0\right)\tran$ in the directions given by vectors $v_1(0)$ and $v_2(0)$ from adjoint solutions as detailed below. From the Taylor expansion $M(\eta) = M_\eta(\eta_0)(\eta-\eta_0) + \calO(\|\eta-\eta_0\|^2)$ we infer by the implicit function theorem that a full rank of $M_\eta(\eta_0)$ implies a one-to-one correspondence of solutions to the bifurcation equation with elements in the kernel of $M(\eta_0)$. 

\medskip
In order to compute $M_\eta(\eta_0)$ and its rank, we project onto the transverse directions to $u_0$, which means to project the inhomogeneous part of equation~\eqref{eq:linearizazion} onto two linearly independent bounded solutions $v_1, v_2$ of the adjoint variational equation $v'=-A\tran\cdot v$, where
$$A\tran = \begin{pmatrix} \smu \cos(\theta_0) & 0 & 0\\\sin(\theta_0) & -\alpha s_0 - 2\smu \cos(\theta_0) & -s_0\\0&s_0& -\alpha s_0 - 2\smu \cos(\theta_0)\end{pmatrix}.$$
The solutions are given in terms of~\eqref{eq:Usol} by 
$$v_1=\frac{U_1 \times U_2}{\det \Phi}=\begin{pmatrix} 0\\-\frac{1}{r(\xi)}\sin(-s_0\xi)\\\frac{1}{r(\xi)}\cos(-s_0\xi) \end{pmatrix} \quad \textnormal{and} \quad v_2=\frac{U_3 \times U_1}{\det \Phi}=\begin{pmatrix} 0\\\frac{1}{r(\xi)}\cos(-s_0\xi)\\\frac{1}{r(\xi)}\sin(-s_0\xi) \end{pmatrix}\,.$$
Implementing the projection onto these, we obtain the so-called
\emph{Melnikov integral} \begin{equation}\label{eq:melnikov integral}
\begin{split}M_\eta(\eta_0)&\coloneqq\int\limits_{-\infty}^{+\infty}{\left(v_1,v_2\right)\tran \cdot f_\eta(u_0;\eta_0)}\,d\xi\\ &=\begin{pmatrix} \beta I_{CC} & \alpha\smu I_S-\smu I_C & I_S+\alpha I_C \\ \beta I_{CS} & -\alpha\smu I_C - \smu I_S & -I_C+\alpha I_S \end{pmatrix},
\end{split}
\end{equation}
where
\begin{align*}
I_{CC}&:=\int\limits_{-\infty}^{+\infty}{\frac{\left(1-\rme^{2\smu \xi} \right)\rme^{\alpha s_0 \xi + 2 \smu \xi} \cos(-s_0 \xi)}{\left( 1+\rme^{2\smu \xi} \right)^3}}d\xi\,,\\
I_{CS}&:=\int\limits_{-\infty}^{+\infty}{\frac{\left(1-\rme^{2\smu \xi} \right)\rme^{\alpha s_0 \xi + 2 \smu \xi} \sin(-s_0 \xi)}{\left( 1+\rme^{2\smu \xi} \right)^3}}d\xi\,,\\
I_C&:=\int\limits_{-\infty}^{+\infty}{\frac{\rme^{\alpha s_0 \xi} \cos(-s_0 \xi)}{\left(1+\rme^{2\smu \xi} \right) \cdot \left(1+\rme^{-2\smu \xi} \right) }}d\xi\,,\\
I_S&:=\int\limits_{-\infty}^{+\infty}{\frac{\rme^{\alpha s_0 \xi} \sin(-s_0 \xi)}{\left(1+\rme^{2\smu \xi} \right) \cdot \left(1+\rme^{-2\smu \xi} \right) }}d\xi\,.
\end{align*}

We next show that the second and third columns in~\eqref{eq:melnikov integral} have non-vanishing determinant so that the rank is always 2, in particular also for $\beta=0$.

For brevity, we present the calculations of $I_{CC}$ and $I_{CS}$ only, which are based on the same idea as the computations in Appendix~\ref{app:center case proof}. The solutions for $I_C$ and $I_S$ can be computed in an analogous way.

\medskip
From Appendix~\ref{app:center case proof} we know that the following integral would not exist in case $s_0=2\smu/\alpha$ and one readily verifies the existence for $s_0=0$. Therefore, we first assume $0<s_0<2\smu/\alpha$ for the moment and discuss the case $s_0=0$ later. We set $$I\coloneqq\int\limits_{-\infty}^{+\infty}{g(\xi)}d\xi\,, \quad g(\xi)\coloneqq\frac{\left(1-\rme^{2\smu \xi} \right)\rme^{(\alpha s_0 + 2 \smu)\xi} \rme^{-\rmi s_0 \xi}}{\left( 1+\rme^{2\smu \xi} \right)^3}\,,$$
so that $I_{CC}=\Re(I)$ and $I_{CS}=\Im(I)$. Utilizing the same idea for the contour integral as in Appendix~\ref{app:center case proof} (cf.\ Figure~\ref{fig:contour integral}) and the residue theorem, we obtain $$I-\rme^{\alpha s_0 \frac{\rmi \pi}{\smu}}\cdot \rme^{s_0 \frac{\pi}{\smu}}\cdot I=2 \pi \rmi \sum{\mathrm{Res} (g)}\,.$$

The function $g$ has a pole of order three at $\frac{\rmi \pi}{2\smu}$ and the \emph{Laurent} series gives
$$\mathrm{Res}(g)=-\frac{(\alpha-\rmi)^2s^2_0}{8\mu\smu}\exp\left( (\alpha-\rmi)\frac{\rmi \pi s_0}{2\smu} \right),$$
and therefore
\begin{align*} I&=\frac{2\pi s_0^2 \rme^{\frac{\pi s_0}{2\smu}}\left[ -2\alpha \cos\left(\frac{\pi \alpha s_0}{2\smu}\right)+\left(\alpha^2 -1\right)\sin\left(\frac{\pi \alpha s_0}{2\smu}\right)\right]}{8\mu\smu\left(1 -  \rme^{\frac{\pi s_0}{\smu}}\cos\left(\frac{\pi \alpha s_0}{\smu}\right)-\rmi \rme^{\frac{\pi s_0}{\smu}}\sin\left(\frac{\pi \alpha s_0}{\smu}\right)\right)}\\&\quad+\frac{\rmi 2\pi s_0^2 \rme^{\frac{\pi s_0}{2\smu}}\left[\left(1-\alpha^2\right)\cos\left(\frac{\pi \alpha s_0}{2\smu}\right) - 2\alpha \sin\left(\frac{\pi \alpha s_0}{2\smu}\right)\right]}{8\mu\smu\left(1 -  \rme^{\frac{\pi s_0}{\smu}}\cos\left(\frac{\pi \alpha s_0}{\smu}\right)-\rmi \rme^{\frac{\pi s_0}{\smu}}\sin\left(\frac{\pi \alpha s_0}{\smu}\right)\right)}\,. \end{align*}
Separating the real and imaginary parts of $I$ we get
$$I_{CC} = \frac{\pi s_0^2\smu\,\rme^{\frac{\pi s_0}{2\smu}}}{4\mu^2}\cdot\frac{2\alpha \left( 1-\rme^{\frac{\pi s_0}{\smu}} \right)\cos\left(\frac{\pi \alpha s_0}{2\smu} \right)  + \left(1-\alpha^2\right)\left( 1+\rme^{\frac{\pi s_0}{\smu}}\right)\sin\left( \frac{\pi \alpha s_0}{2\smu} \right)}{1+\rme^{\frac{2\pi s_0}{\smu}} - 2 \rme^{\frac{\pi s_0}{\smu}}\cos\left(\frac{\pi \alpha s_0}{\smu} \right)}$$
$$I_{CS}= \frac{\pi s_0^2\smu\,\rme^{\frac{\pi s_0}{2\smu}}}{4\mu^2 }\cdot\frac{ \left(1-\alpha^2\right)\left( 1-\rme^{\frac{\pi s_0}{\smu}}\right)\cos\left( \frac{\pi \alpha s_0}{2\smu} \right)+ 2\alpha \left( 1+\rme^{\frac{\pi s_0}{\smu}} \right)\sin\left(\frac{\pi \alpha s_0}{2\smu} \right)}{1+\rme^{\frac{2\pi s_0}{\smu}} - 2 \rme^{\frac{\pi s_0}{\smu}}\cos\left(\frac{\pi \alpha s_0}{\smu} \right)}\,.$$

As mentioned before, one analogously gets $$I_C=\frac{\pi s_0\, \rme^{\frac{\pi s_0}{2\smu}}}{2\mu}\cdot \frac{\left( 1-\rme^{\frac{\pi s_0}{\smu}} \right)\cos\left( \frac{\pi \alpha s_0}{2\smu} \right) -\alpha \left(1+\rme^{\frac{\pi s_0}{\smu}} \right) \sin\left(\frac{\pi \alpha s_0}{2\smu}\right)}{1+\rme^{\frac{2 \pi s_0}{\smu}}-2\rme^{\frac{\pi s_0}{\smu}}\cos\left(\frac{\pi \alpha s_0}{\smu}\right)}$$
$$I_S=\frac{\pi s_0\, \rme^{\frac{\pi s_0}{2\smu}}}{2\mu}\cdot\frac{\alpha\left( 1-\rme^{\frac{\pi s_0}{\smu}} \right)\cos\left( \frac{\pi \alpha s_0}{2\smu} \right) + \left(1+\rme^{\frac{\pi s_0}{\smu}} \right) \sin\left(\frac{\pi \alpha s_0}{2\smu}\right)}{1+\rme^{\frac{2 \pi s_0}{\smu}}-2\rme^{\frac{\pi s_0}{\smu}}\cos\left(\frac{\pi \alpha s_0}{\smu}\right)}\,.$$

Having the explicit expressions for $I_C$, $I_S$, $I_{CC}$, as well as $I_{CS}$, we can study the rank of~\eqref{eq:melnikov integral} in case $0<s_0<2\smu/\alpha$. The determinant of the second and third column of~\eqref{eq:melnikov integral} simplifies to
$$\left( \alpha I_S-I_C\right)^2 + \left( I_S+\alpha I_C\right)^2= \left(1+\alpha^2\right)^2\pi^2s_0^2\rme^{\frac{\pi s_0}{\smu}}\neq0 \,,\quad \forall  s_0\neq0\,.$$
Therefore, $M_\eta(\eta_0)$ has full rank and we obtain
\begin{equation}\label{eq:splitting function}M(\eta)=\begin{pmatrix} \beta I_{CC} & \alpha\smu I_S-\smu I_C & I_S+\alpha I_C \\ \beta I_{CS} & -\alpha\smu I_C - \smu I_S & -I_C+\alpha I_S \end{pmatrix}\cdot(\eta-\eta_0)+\calO\left(\|\eta-\eta_0\|^2\right).
\end{equation}

\medskip
In the remaining case $s_0=0$, which means $h=\beta/\alpha$, we have $I_{CC}=I_{CS}=I_S=0$, $I_C=\frac{1}{2\smu}$, and thus
\begin{equation}\label{eq:melnikovzerospeed}
M_\eta(\eta_0)=\begin{pmatrix} 0&-\frac{1}{2}&\frac{\alpha}{2\smu}\\0&-\frac{\alpha}{2}&-\frac{1}{2\smu}
\end{pmatrix},
\end{equation}
whose rank is always 2. Thus the splitting directions are independent of $\cc$ in first order in case $h=\beta/\alpha$. Moreover, note that the splitting in $s$ is independent of the anisotropy $\mu$ to first order in case $h=\beta/\alpha$. This completes the proof of Theorem~\ref{theo:center}.
\end{document}